\documentclass[10pt,letterpaper,twoside]{amsart}
\usepackage[left=1.2in,right=1.2in,top=1in,bottom=1in,headheight=12pt,headsep=18pt]{geometry}
\usepackage{amsmath,amssymb,mathtools,mathrsfs,microtype}
\usepackage{fancyhdr,xcolor,tikz,array,longtable,needspace}
\definecolor{citeblue}{RGB}{50,155,215}
\usepackage[colorlinks=true,linkcolor=blue,citecolor=citeblue,urlcolor=blue]{hyperref}
\newtheorem{theorem}{Theorem}[section]
\newtheorem{lemma}[theorem]{Lemma}
\newtheorem{proposition}[theorem]{Proposition}

\theoremstyle{definition}
\newtheorem{definition}[theorem]{Definition}
\AtBeginDocument{%
\setlength{\abovedisplayskip}{4pt plus 1pt minus 1pt}%
\setlength{\belowdisplayskip}{4pt plus 1pt minus 1pt}%
\setlength{\abovedisplayshortskip}{3pt plus 1pt minus 1pt}%
\setlength{\belowdisplayshortskip}{3pt plus 1pt minus 1pt}}
\title{Tuza's Conjecture for Graphs of\\Maximum Degree at Most 10}
\author{Amrit Kandasamy}
\date{}
\hypersetup{pdftitle={Tuza's Conjecture for Graphs of Maximum Degree at Most 10}}

\tikzset{proof vertex/.style={circle,draw=black,fill=white,inner sep=0.8pt,font=\scriptsize,minimum size=3.3mm},
leaf one/.style={violet!85!black,line width=0.85pt},
leaf two/.style={teal!85!black,line width=0.85pt,densely dashed},
leaf three/.style={orange!85!black,line width=1pt,densely dotted,line cap=round},
leaf four/.style={brown!85!black,line width=0.85pt,dash dot}}
\newcommand{\PackingRow}[2]{%
\begin{tikzpicture}[baseline=-0.5ex,x=1cm,y=1cm]
\node[anchor=east,font=\small] at (-0.32,0) {$#1$};
\foreach \PackingFrom/\PackingTo [count=\PackingIndex from 0] in {#2}{%
\draw[red!75!black,line width=0.7pt] (1.17*\PackingIndex,0)--(1.17*\PackingIndex+0.58,0);
\node[proof vertex] at (1.17*\PackingIndex,0) {$\PackingFrom$};
\node[proof vertex] at (1.17*\PackingIndex+0.58,0) {$\PackingTo$};}
\end{tikzpicture}}
\newcommand{\LeafPacking}[5]{%
\begin{tikzpicture}[baseline=-0.5ex,x=1cm,y=1cm]
\foreach \PackingVertex/\PackingX/\PackingY in {u/0.00000/1.25000,s/0.98995/0.88388,v/1.40000/0.00000,d/0.98995/-0.88388,c/0.00000/-1.25000,r/-0.98995/-0.88388,a/-1.40000/-0.00000,b/-0.98995/0.88388}{\coordinate (\PackingVertex) at (\PackingX,\PackingY);}
\foreach \PackingFrom/\PackingTo in {#1}{\draw[leaf one] (\PackingFrom)--(\PackingTo);}
\foreach \PackingFrom/\PackingTo in {#2}{\draw[leaf two] (\PackingFrom)--(\PackingTo);}
\foreach \PackingFrom/\PackingTo in {#3}{\draw[leaf three] (\PackingFrom)--(\PackingTo);}
\foreach \PackingFrom/\PackingTo in {#4}{\draw[leaf four] (\PackingFrom)--(\PackingTo);}
\foreach \PackingFirst/\PackingSecond/\PackingThird in {#5}{%
\draw[red!75!black,line width=0.9pt] (\PackingFirst)--(\PackingSecond)--(\PackingThird)--cycle;}
\foreach \PackingVertex in {u,v,r,s,a,b,c,d}{\node[proof vertex,font=\footnotesize,minimum size=3.8mm] at (\PackingVertex) {$\PackingVertex$};}
\end{tikzpicture}}

\begin{document}
\begin{abstract}
For a finite simple graph $G$, let $\nu(G)$ be the maximum number of pairwise edge disjoint triangles and $\tau(G)$ the minimum number of edges meeting every triangle. Tuza conjectured in 1981 that every such $G$ satisfies $\tau(G) \le 2 \nu(G).$ We prove the conjecture for the class of graphs with $\Delta\le10.$
\end{abstract}
\maketitle
\thispagestyle{plain}
\begin{center}
\small Code and verification for Lemma \ref{subcubic-basis-ten} and Proposition \ref{forty-ten} \\[0.15em]
\href{https://github.com/Amkan2026/tuza-degree-ten}{https://github.com/Amkan2026/tuza-degree-ten}
\end{center}
\section{Introduction}
For a finite simple graph $G$, let $\nu(G)$ be the maximum number of pairwise edge disjoint triangles and $\tau(G)$ the minimum number of edges meeting every triangle.
Tuza \cite{Tuza} conjectured that $\tau(G)\le2\nu(G)$.
Puleo \cite{Puleo} proved this when every subgraph has average degree less than $7$, and Gupta \cite{Gupta} proved it for $\Delta(G)\le7$.
\leavevmode\vspace{0.5em} \\
The edges of a maximum packing meet every triangle, so $\tau(G)\le3\nu(G)$.
Tuza's bound is sharp since $\nu(K_4)=1$ and $\tau(K_4)=2$.
\begin{theorem}\label{main}
Every finite simple graph $G$ with $\Delta(G)\le10$ satisfies
\[
\tau(G)\le2\nu(G).
\]
\end{theorem}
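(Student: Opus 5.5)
We argue by minimal counterexample, in the spirit of Puleo's reducibility method. Let $G$ be a counterexample with $\Delta(G)\le 10$ and $|V(G)|+|E(G)|$ minimum.

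\textbf{Basic structure.} We may assume $G$ is connected. We may also assume every edge lies in a triangle, since deleting an edge in no triangle changes neither $\tau$ nor $\nu$.

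\textbf{Reducibility.} The central tool is the standard reducibility principle. Suppose there is a set $X$ of edges (or a small subgraph $H$) and a packing $\mathcal{T}$ of $k$ edge-disjoint triangles inside $H$ such that:
\begin{itemize}
\item every triangle of $G$ meeting the removed edges is hit by a set $Y$ of at most $2k$ edges;
\item the triangles of $\mathcal{T}$ are edge-disjoint from $G-Y$.
\end{itemize}
Then minimality gives $\tau(G)\le |Y|+\tau(G-Y)\le 2k+2\nu(G-Y)\le 2\nu(G)$, a contradiction. Hence $G$ contains no such configuration, which we call a \emph{$k$-reducible} configuration.

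\textbf{Local consequences.} First we show that $G$ is $K_4$-free except in a few forced situations; a $K_4$ with 2 covering edges and a suitable packing is reducible. Second, we show that every edge lies in at least three triangles. Otherwise we take an edge $e$ lying in one or two triangles, cover them with few edges, and pick a triangle through $e$ for the packing. The routine case analysis of the triangle neighbourhood of $e$ is where Puleo's lemmas apply.

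\textbf{Discharging and computer verification.} From these local facts we show that the link graphs $G[N(v)]$ have minimum degree at least $3$, which pushes the average degree up. Puleo's theorem already handles every $G$ whose subgraphs all have average degree less than $7$, so $G$ has a subgraph of average degree at least $7$. With $\Delta\le 10$ the link of each vertex is a graph on at most $10$ vertices, and there are only finitely many of them. The plan is to:
\begin{enumerate}
\item enumerate all possible links, i.e.\ graphs on at most $10$ vertices with minimum degree at least $3$ in the triangle-neighbourhood sense;
\item for each link, either exhibit a reducible configuration, which is the role of the computer-checked Lemma~\ref{subcubic-basis-ten}, or record which weight transfers are possible;
\item run a discharging argument, or Proposition~\ref{forty-ten}, that assigns charge $\deg(v)-7$ to each vertex and shows that the total charge is negative, contradicting average degree at least $7$.
\end{enumerate}

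\textbf{Main obstacle.} The hard step is the dense regime, vertices of degree $9$ or $10$ whose links are nearly complete. Here local reductions with ratio $2$ are hard to find, since $K_5$-like neighbourhoods are tight for Tuza's bound. We would first try fractional arguments on these regions, using Krivelevich's $\tau\le 2\nu^*$, and reduce to finitely many link types checked by exhaustive computer search.
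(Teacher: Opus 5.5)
Your plan has a genuine gap at the step meant to produce the contradiction.

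\textbf{The discharging cannot close.} As written, Puleo's theorem only gives $\mathrm{mad}(G)\ge7$. The subgraph of average degree at least $7$ is not a minimal counterexample and inherits none of the reducibility-free structure, so proving $\sum_v(\deg(v)-7)<0$ on $G$ contradicts nothing.

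More seriously, even if you knew $G$ itself had average degree at least $7$, no analysis of single links followed by discharging can work. Take the Kneser graph $K(7,2)$: the $2$-subsets of a $7$-set, adjacent when disjoint. It is $10$-regular and $K_4$-free, and every edge lies in exactly $3$ triangles. Every link is the Petersen graph, which is not WKE, so no link-based reduction applies. It satisfies every local property you list, yet its total charge is $3|V|>0$, and no weight transfer can make that negative.

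This is exactly the situation the paper reaches in Lemma~\ref{subcubic-links-ten} when all codegrees are at most $3$. It is removed only by reducing a vertex pair: Lemma~\ref{petersen-pair-ten} packs $7$ triangles at an edge $uv$ and covers every triangle meeting $\{u,v\}$ with $14$ edges. The missing idea is to work in the neighbourhood $G[N[u]\cup N[v]]$ of an edge $uv$ of maximum codegree $c$, which has up to $16$ vertices rather than the $10$ of a link. There you show directly that $\{u,v\}$ is reducible, possibly enlarged by common neighbours or a third hub. The paper does this for $c\le3$ through the Petersen analysis, and for $4\le c\le9$ through its hand lemmas plus the computer check of Proposition~\ref{forty-ten} over the $40$ endpoint degree pairs. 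It uses no discharging at all; every configuration is shown reducible outright.

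\textbf{The local claims are unsupported.}
\begin{itemize}
\item A $K_4$ whose edges have large codegree is not reducible with one triangle and two cover edges. The paper has to treat $K_6$ and $K_6-e$ arising from $K_5$ and $K_5-e$ link components (Lemma~\ref{five-component-ten}), and diamonds inside subcubic links.
\item That every edge lies in at least three triangles is not established. Codegree $1$ does reduce: delete the other two edges of the unique triangle. But if $xy$ lies only in $xyz$ and $xyw$ and you pack $xyz$, two cover edges must hit $xyw$ and also every other triangle through $xz$ and through $yz$, which fails in general. The paper does not prove codegree at least $3$; Lemma~\ref{subcubic-basis-ten} has to allow $\delta(L)\ge2$.
\item Your reducibility condition should say that no triangle of $G-Y$ shares an edge with $\mathcal T$. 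Read literally, it puts $E(\mathcal T)$ inside $Y$ and forces $3k\le2k$.
\item Krivelevich's bound $\tau\le2\nu^*$ uses the fractional packing number, which can exceed $\nu$. It yields no integral packing $S$ with $|X|\le2|S|$, so it cannot supply reductions in the dense regime. The paper handles that regime with explicit packings such as Lemma~\ref{d10fiveholes}.
\end{itemize}
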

\section{Preliminaries}\label{reductions-ten}
Write $N[x]=N(x)\cup\{x\}$, $E(S)$ for the edges of a packing $S$, and $E(A,B)$ for the edges with one endpoint in each of $A,B$.
The link at $x$ is $G[N(x)]$, and the codegree of $xy$ is $|N(x)\cap N(y)|$.
\leavevmode\vspace{0.5em} \\
A reduction removes triangles at a cost of at most two cover edges per packed triangle.
It also needs to leave the new packing disjoint from every packing in the smaller graph.
\begin{definition}[Puleo {\cite[Definition 2.1]{Puleo}}]\label{reducing-ten}
A nonempty vertex set $U$ is reducible if a triangle packing $S$ and an edge set $X$ satisfy
$$
|X|\le2|S|$$
$$G-X\text{ has no triangle meeting }U$$
$$E(S)\setminus E(U,V(G))\subseteq X.$$
\end{definition}
\begin{lemma}[Puleo {\cite[Lemma 2.2]{Puleo}}]\label{combine-ten}
If $U$ is reducible and $\tau(G')\le2\nu(G')$ for $G'=(G-X)-U$, then $\tau(G)\le2\nu(G)$.
\end{lemma}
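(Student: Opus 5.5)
The argument is the standard combination of a cover for the smaller graph with the reduction data, followed by a packing count. Let $G'=(G-X)-U$. By hypothesis there is an edge set $X'$ of $G'$ meeting every triangle of $G'$ with $|X'|\le 2\nu(G')$. Let $S'$ be a maximum triangle packing of $G'$, so $|S'|=\nu(G')$. The plan is to show two things: that $X\cup X'$ covers every triangle of $G$, and that $S\cup S'$ is a triangle packing of $G$. Together these give
\[
\tau(G)\le |X|+|X'|\le 2|S|+2|S'|=2|S\cup S'|\le 2\nu(G).
\]

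For the cover, take any triangle $T$ of $G$ that avoids $X$. Then $T$ is a triangle of $G-X$. If $T$ meets $U$, this contradicts the second condition of Definition~\ref{reducing-ten}, since $G-X$ has no triangle meeting $U$. So $T$ lies in $(G-X)-U=G'$, and there it is hit by $X'$. Hence $X\cup X'$ covers all triangles of $G$.

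For the packing, it suffices that no edge of a triangle in $S$ lies in $G'$; the triangles of $S'$ are edge-disjoint among themselves and lie in $G'$. Take an edge $e\in E(S)$. There are two cases.
\begin{itemize}
\item If $e\in E(U,V(G))$, then $e$ has an endpoint in $U$, so it is not an edge of $G'=(G-X)-U$.
\item Otherwise $e\in E(S)\setminus E(U,V(G))\subseteq X$ by the third condition, so $e$ was deleted and again $e\notin E(G')$.
\end{itemize}
Thus $E(S)\cap E(G')=\emptyset$. The triangles of $S$ are therefore edge-disjoint from those of $S'$, and $S\cup S'$ is a packing with $|S|+|S'|$ triangles.

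The only delicate point is reading $E(U,V(G))$ as the set of edges with at least one endpoint in $U$, including edges inside $U$. Under that reading the first case is immediate. If instead edges inside $U$ were excluded, they would fall into the second case and lie in $X$, so the argument goes through either way.
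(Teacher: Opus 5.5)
Your proof is correct and follows the standard argument behind Puleo's Lemma 2.2, which the paper cites rather than reproves. Its two steps are that $X\cup X'$ covers every triangle of $G$, and that $S\cup S'$ is a packing because (as the paper remarks) the third condition of Definition~\ref{reducing-ten} keeps every packed edge outside $U$ in $X$ and hence out of $G'$.
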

An edge reduction consists of $F\ne\varnothing$, a packing $S$ and an edge set $X$ satisfying
\[
|X|\le2|S|,\ E(S)\setminus F\subseteq X,\ G-X\text{ has no triangle using }F.
\]
Puleo \cite[Definition 4.2 and Lemma 4.3]{Puleo} extends Tuza's inequality from $G-F-X$ to $G$.
\leavevmode\vspace{0.5em} \\
Assume $G$ is a counterexample with $\Delta(G)\le10$ minimizing $|V(G)|+|E(G)|$.
Every proper subgraph satisfies Tuza's inequality, so $G$ has no vertex or edge reduction.
The third condition in Definition \ref{reducing-ten} keeps every packed edge outside $U$ in $X$ and hence out of $G'$.
\leavevmode\vspace{0.5em} \\
A graph $L$ is weak K\"onig Egerv\'ary if a matching $M$ and a vertex set $Q$ satisfy
\begin{equation*}
|Q|\le |M|,\,E(L-Q)\subseteq M.
\end{equation*}
Following Puleo \cite[Definition 4.1]{Puleo}, we write WKE for this property.
\leavevmode\vspace{0.5em} \\
In a link at $x$, the edges of $M$ give $|M|$ triangles at $x$.
Selecting $M$ and the edges from $x$ to $Q$ covers every triangle at $x$ with at most $2|M|$ edges.
\begin{lemma}[Puleo {\cite[Lemmas 4.3, 4.4 and Corollary 4.9]{Puleo}}]\label{wke-reduction-ten}
Every connected graph of order at most $4$ is WKE.
A WKE link gives a vertex reduction, and a WKE component of a link gives an edge reduction.
\end{lemma}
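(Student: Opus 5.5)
The statement has three parts, and I would prove each directly from the definitions already given: WKE, edge reduction, and Definition \ref{reducing-ten}.

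\emph{Small connected graphs are WKE.} Take a connected graph $L$ on at most $4$ vertices and argue by cases.
If $L$ has a vertex $q$ meeting every edge (a star, or a single vertex), set $Q=\{q\}$ and let $M$ be one edge at $q$. Then $|Q|\le|M|$ and $E(L-Q)=\varnothing$. When $L$ has no edges, take $Q=M=\varnothing$.
Otherwise $L$ has two disjoint edges, so it has $4$ vertices and matching number $2$. For $P_4$ and $C_4$, pick a minimum vertex cover $Q$ of size $2$ and a perfect matching $M$; then $E(L-Q)=\varnothing$.
For the paw, $K_4^-$ and $K_4$, take $Q$ to be two adjacent vertices $x,y$ and $M=\{xy,zw\}$, where $z,w$ are the other two vertices. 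Then $E(L-Q)\subseteq\{zw\}\subseteq M$.
These cases exhaust the connected graphs on at most $4$ vertices, since the only connected graph with no two disjoint edges that is not a star is the triangle. The triangle is handled by $Q=\{x\}$ and $M=\{yz\}$, because $E(L-Q)=\{yz\}\subseteq M$.

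\emph{A WKE link gives a vertex reduction.} Let $L=G[N(x)]$ with witnesses $M$ and $Q$, and take $U=\{x\}$.
Set $S=\{xab: ab\in M\}$. These triangles are pairwise edge-disjoint because $M$ is a matching.
Set $X=M\cup\{xq: q\in Q\}$, so $|X|\le |M|+|Q|\le 2|S|$.
Every triangle at $x$ is $xab$ with $ab\in E(L)$. Either $a$ or $b$ lies in $Q$, in which case $xa$ or $xb$ is in $X$, or $ab\in E(L-Q)\subseteq M\subseteq X$. So $G-X$ has no triangle meeting $U$.
The edges of $S$ not in $E(U,V(G))$ are exactly the edges of $M$, and these lie in $X$.

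\emph{A WKE component of a link gives an edge reduction.} Now let $C$ be a WKE component of $L$ with witnesses $M$ and $Q$ inside $C$.
Let $F=\{xv: v\in V(C)\}$, which is nonempty, and define $S$ and $X$ as before but using $C$. Then $|X|\le2|S|$, and $E(S)\setminus F=M\subseteq X$.
Consider a triangle using some edge $xv\in F$. If it contains $x$, it is $xvw$ with $vw\in E(L)$, so $w\in C$ because $C$ is a component. It is then covered exactly as before.
The delicate point is a triangle containing $xv$ that does not contain $x$; there is none, since every triangle containing the edge $xv$ contains $x$.

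The main obstacle is only bookkeeping: checking the case list in the first part and confirming that triangles through $F$ stay within one component. The rest is the short verification above, as in Puleo's Lemmas 4.3, 4.4 and Corollary 4.9, which we may also simply cite.
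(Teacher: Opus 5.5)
Your proposal is correct and follows the route the paper takes through its citation of Puleo: both reductions are exactly the packing on $M$ with cover $M\cup\{xq:q\in Q\}$, and your case check of connected graphs on at most $4$ vertices replaces the citation of Puleo's Corollary 4.9. The one point to tighten is that for the paw, $K_4^-$ and $K_4$ you must choose $xy$ as an edge of a perfect matching $\{xy,zw\}$, since for the paw an arbitrary adjacent pair $x,y$ can leave $zw$ a non-edge.
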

An isolated vertex reduces with $S=X=\varnothing$.
So every link component has at least $5$ vertices and is not WKE, and $\delta(G)\ge5$.
\leavevmode\vspace{0.5em} \\
Following Puleo \cite[Definition 2.3]{Puleo}, call $G$ robust when every link component has order at least $5$.
\begin{lemma}[Puleo {\cite[Lemma 2.7]{Puleo}}]\label{lowstructure-ten}
In a robust graph with no reduction, $\{x:d(x)\le6\}$ is independent and
\[
d(x)\le6\implies
\Delta\bigl(\overline{G[N(x)]}\bigr)\le1,\,
\bigl|E\bigl(\overline{G[N(x)]}\bigr)\bigr|\ne2.
\]
If $d(x)=6$ and $G[N(x)]\cong K_{2,2,2}$, then no neighbor of $x$ has degree $7$.
\end{lemma}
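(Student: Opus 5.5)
Since this is Puleo's Lemma 2.7, I will reprove it directly from Definition \ref{reducing-ten}, Lemma \ref{combine-ten} and Lemma \ref{wke-reduction-ten}, together with the standing facts that every link component has at least $5$ vertices and is not WKE. There are three assertions, and each is proved by contradiction: from a violating configuration I construct a reduction (a pair $S,X$ with $|X|\le2|S|$).

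\textbf{The link of a low-degree vertex.} Let $d(x)\le6$ and $L=G[N(x)]$. Each component of $L$ has at least $5$ vertices and $|N(x)|\le6$, so $L$ is connected. In particular $L$ has no isolated vertex, so $d(x)=5$ or $6$.

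I will show directly that $L$ is WKE whenever $\overline{L}$ has a vertex of degree at least $2$, or has exactly two edges. Lemma \ref{wke-reduction-ten} then turns this into a vertex reduction, which is a contradiction.

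Suppose $w$ has two non-neighbours $y,z$ in $L$. Choose $Q=\{w\}$, or a small set containing $w$, so that $L-Q$ has at most $5$ vertices and is missing the edges at $w$. I then look for a matching $M$ with $|M|\ge|Q|$ covering $E(L-Q)$. This means $E(L-Q)$ must itself be a matching, which typically fails, so a plain choice will not work.

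The better approach is to use the full definition: $E(L-Q)\subseteq M$ with $|Q|\le|M|$. With $|N(x)|=6$, take $Q$ to be a vertex cover of $L$ minus a matching. Since a path $y\,w\,z$ lies in $\overline{L}$, the graph $L$ is dense on the other vertices. Concretely, one takes $Q$ of size $3$ covering all edges except a perfect-ish matching of size $3$ among $N(x)$, and uses the non-edges at $w$ to save one vertex of $Q$.

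This is a finite check on graphs with $5$ or $6$ vertices, and I would do it by enumerating complements. For $|N(x)|=6$, $\overline{L}$ has maximum degree at least $2$ or exactly two edges, and one exhibits $(M,Q)$ with $|M|=|Q|\le3$. For example, if $\overline{L}\supseteq\{wy,wz\}$, take $M=\{wa,yb,zc\}$, where $N(x)=\{w,y,z,a,b,c\}$, and $Q=\{a,b,c\}$. Then $L-Q$ lives on $\{w,y,z\}$, and its only possible edge is $yz$. If $yz$ is present I adjust $M$ to include it. When $\overline{L}$ has exactly two disjoint edges, a similar explicit pair works.

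\textbf{Independence of low-degree vertices.} Suppose $x\sim y$ with $d(x),d(y)\le6$. The only possible links are $K_6$ minus a matching of size $0$, $1$ or $3$, or $K_5$ minus at most a matching, so both links are nearly complete. I take $U=\{x,y\}$, pack triangles at $x$ and $y$ greedily using the dense links, and cover by edges from $x$ and $y$ together with link edges, aiming for $|X|\le2|S|$. The overlap at $xy$ saves the needed edge. This yields a reducible set, contradicting minimality through Lemma \ref{combine-ten}.

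\textbf{The $K_{2,2,2}$ case.} Suppose $d(x)=6$, $L\cong K_{2,2,2}$, and a neighbour $y$ has $d(y)=7$. I take $U=\{x,y\}$ again and use the octahedral structure to pack $4$ edge-disjoint triangles at $x$ (the four alternating faces). I then add triangles at $y$ through its single extra neighbourhood and check the count.

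I expect this last case to be the main obstacle. The link of $y$ is not determined, so a case split over $G[N(y)]$ is required. My first attempt would be to pass to an edge reduction with $F$ the edges at $x$ together with $xy$, invoking the WKE property of the link component of $y$ after deleting $x$.
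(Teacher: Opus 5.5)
\textbf{There is a genuine gap, and it appears already in the first part.} The paper imports this lemma from Puleo without proof, so your argument is judged on its own terms. You plan to obtain every forbidden link from WKE, but WKE cannot deliver the clause $|E(\overline{G[N(x)]})|\ne2$.

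Two missing edges through a common vertex already fall under $\Delta(\overline L)\ge2$. So this clause concerns exactly two \emph{disjoint} missing edges, that is, $L=W_4$ when $d(x)=5$ and $L=K_6-\{12,34\}$ when $d(x)=6$. Neither is WKE. Since $|Q|\le|M|\le\nu_m(L)=\lfloor|V(L)|/2\rfloor$, at least three vertices remain in $L-Q$. Any three vertices span at most one non-edge, hence two edges of $L$ sharing a vertex, so $E(L-Q)$ is never a matching. (The paper itself lists $W_4$ as non-WKE in the proof of Lemma \ref{five-component-ten}.) So your claim that ``a similar explicit pair works'' is false.

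These two links need a vertex reduction of $\{x\}$ that also packs triangles lying inside $N(x)$:
\begin{itemize}
\item For $W_4$ with center $w$ and rim $1234$, take $S=\{x12,x34,w14,w23\}$ and $X=E(L)$, giving $|X|=8=2|S|$.
\item For $K_6-\{12,34\}$, take $S=\{x13,x24,x56,146,236\}$ and $X=E(L-5)\cup\{x5,56\}$, giving $|X|=10=2|S|$.
\end{itemize}
Your $\Delta(\overline L)\ge2$ case is true, but your single example presupposes $wa,yb,zc\in E(L)$. The general check still has to be written out: find an independent triple in $L$, or a perfect matching through $yz$, or use $\nu_m(L)\le2$, with connectivity.

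\textbf{The other two assertions are not proved.} For independence you give no packing or cover, and greedy packing is not enough. Suppose both links are $K_6-e$, $xy$ has codegree $4$, and the private vertices are $a,b$. The natural cover $\{xy,xa,yb\}\cup E(G[C])$ has $9$ edges. It admits at most $4$ packed triangles whose edges outside $\{x,y\}$ it contains, so a different cover using contact edges is needed. You have to run through the admissible pairs of links. Each side has at most one private vertex, and a degree-$5$ link is $K_5$ or $K_5-e$, not ``$K_5$ minus a matching''.

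In the $K_{2,2,2}$ part your starting point is wrong. Triangles at $x$ correspond to edges of the link and $\nu_m(K_{2,2,2})=3$, so there are no four edge-disjoint triangles at $x$. The four alternating faces of the octahedron lie inside $N(x)$ and avoid $x$.

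No edge reduction through the link of $y$ is needed. Use the paper's labels: $x=0$, the degree-$7$ neighbor $u=1$, its antipode $w=2$, and $C=\{3,4,5,6\}$ inducing the $4$-cycle $3546$. Then $u$ has exactly two private neighbors $B$. Packing A, namely $S=\{036,045,135,146\}$ with $X=\{01,02,35,36,45,46\}\cup E(\{u\},B)$, gives $|X|=8=2|S|$ and reduces $\{x,u\}$.
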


\subsection{Local reductions}
For $uv\in E(G)$, put
$$
J=G[N[u]\cup N[v]]$$ $$C=N(u)\cap N(v)$$
$$A=N(u)\setminus N[v]$$ $$B=N(v)\setminus N[u].$$
Call $A,B$ the private sets.
Every triangle at $u$ or $v$ lies in $J$ and uses no edge of $E(A,B)$.
\begin{lemma}\label{interval-ten}\label{commonvariableinterval}
Let $J^-\subseteq J^+$ be graphs on $V(J)$ with the same neighborhoods at $u,v$, let $S$ be a packing in $J^-$, and let $X\subseteq E(J^+)$ satisfy
$$
|X|\le2|S|$$
$$E(S)\setminus E(\{u,v\},V(J^+))\subseteq X$$
$$J^+-X\text{ has no triangle at }u\text{ or }v.$$
If $J^-\subseteq J\subseteq J^+$, then $\{u,v\}$ reduces with $S,X\cap E(G)$.
\end{lemma}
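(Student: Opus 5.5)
We verify directly the three conditions of Definition~\ref{reducing-ten} for $U=\{u,v\}$, the packing $S$ and the edge set $X'=X\cap E(G)$. The one structural fact we use throughout is that $J^-$, $J$ and $J^+$ are all graphs on $V(J)=N[u]\cup N[v]$ and have the same neighborhoods at $u$ and at $v$. Since $J$ is an induced subgraph of $G$, these common neighborhoods are $N_G(u)$ and $N_G(v)$.

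First we check that $S$ is a triangle packing in $G$. Every triangle of $S$ lies in $J^-$, and $J^-\subseteq J\subseteq G$. So $S$ is a packing of triangles of $G$, and its edge-disjointness is inherited. The size bound is immediate, since $|X'|\le|X|\le 2|S|$.

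For the third condition, take an edge $e\in E(S)\setminus E(U,V(G))$. As $e\in E(J^-)$, it is not incident to $u$ or $v$ in $J^-$. The graphs involved have the same edges at $u,v$, and every $G$-edge at $u$ or $v$ lies in $J$. Hence $e\notin E(\{u,v\},V(J^+))$. The hypothesis then gives $e\in X$. Since also $e\in E(J^-)\subseteq E(G)$, we get $e\in X'$.

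The main point is the covering condition: $G-X'$ has no triangle meeting $U$. Suppose $T$ is a triangle of $G$ containing $u$ (the case of $v$ is symmetric). Its other two vertices lie in $N_G(u)\subseteq V(J)$, and $J$ is induced, so $T\subseteq J\subseteq J^+$. Thus $T$ is a triangle of $J^+$ at $u$. By hypothesis $T$ contains an edge $f\in X$. Because $f\in E(T)\subseteq E(G)$, we have $f\in X'$, so $T$ is not a triangle of $G-X'$.

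This proves that $\{u,v\}$ is reducible with $S$ and $X\cap E(G)$. The only subtlety is keeping track of the edge sets: edges at $u,v$ coincide across $J^-,J,J^+$ and $G$, and the intersection with $E(G)$ discards only edges of $J^+$ that no triangle of $G$ uses. The argument needs no deeper obstacle than this.
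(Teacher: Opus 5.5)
Your proof is correct and takes the same route as the paper. The lower inclusion $J^-\subseteq J\subseteq G$ makes $S$ a packing of $G$ whose edges off $\{u,v\}$ lie in $X\cap E(G)$, and the upper inclusion, with $J$ induced on $N[u]\cup N[v]$, puts every triangle of $G$ at $u$ or $v$ into $J^+$, where it meets $X$ in an edge of $G$; you spell out in detail what the paper compresses into two sentences.
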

\begin{proof}
The lower inclusion keeps every packed edge and the upper inclusion makes $X\cap E(G)$ cover every triangle at $u$ or $v$.
Every packed edge outside $\{u,v\}$ belongs to $X\cap E(G)$, with $|X\cap E(G)|\le|X|\le2|S|$.
\end{proof}
For fixed $S$ and an edge set $X$ meeting every triangle on $uv$, require $E(S)\subseteq E(J^-)$ and omit from $J^+$ every link edge completing a triangle that misses $X$.
Keep $E(A,B)$ and allow relabelings that preserve the endpoint neighborhoods.
\leavevmode\vspace{0.5em} \\
For $Z\subseteq C$, $U=\{u,v\}\cup Z$ and $\chi:V(J)\to\{0,1\}$, take
$$X_J=\{ab\in E(J):\chi(a)=\chi(b)\}$$
$$X=X_J\cup E\bigl(Z,V(G)\setminus V(J)\bigr)$$
Every local triangle meets $X_J$, and every triangle through $Z$ outside $J$ meets a selected outside edge.
So a packing $S$ in $J$ gives a reduction whenever
\[
E(S)\setminus E(U,V(G))\subseteq X_J,\,
|X_J|+\sum_{z\in Z}\bigl(10-d_J(z)\bigr)\le2|S|.
\]
Adding an edge $ab$ changes the cover bound by
\[
\mathbf1_{\chi(a)=\chi(b)}-\mathbf1_{a\in Z}-\mathbf1_{b\in Z}.
\]
This is at most zero if $\{a,b\}\cap Z\ne\varnothing$.
Give $A,B$ opposite colors so $E(A,B)$ adds zero.
Other allowed edges outside $Z$ cost one if their endpoint colors agree.
\leavevmode\vspace{0.5em} \\
An edge reduction can delete a packed edge without picking it only after covering every triangle on that edge.
\section{Reductions}
\subsection{Subcubic links}\label{subcubic-ten}
Suppose every edge of $G$ has codegree at most $3$.
Then every link $L$ has $\Delta(L)\le3$.
\leavevmode\vspace{0.5em} \\
Let $P$ be the Petersen graph.
Get $R$ by subdividing one edge in each of two copies of $K_4$ and joining the new vertices.
Write $\nu_m(L)$ for the matching number of $L$.
\begin{lemma}[{\cite{Supplement}}]\label{subcubic-basis-ten}
Let $L$ be connected with $\delta(L)\ge2$, $\Delta(L)\le3$ and $|V(L)|\le10$.
Then $L$ is WKE unless $L\cong P$ or $L\cong R$.
\end{lemma}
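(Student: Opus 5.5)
The plan is to prove Lemma \ref{subcubic-basis-ten} by first disposing of most graphs with a general structural argument, and then settling the remaining finite family by exhaustive check, which is how the cited supplement is organized. Two easy cases come first. If $L$ has a perfect or near-perfect matching $M$ such that $L-M$ is covered by few vertices, we are done directly. More usefully, if $L$ has a vertex set $Q$ with $L-Q$ a matching $M_0$ and $|Q|\le\nu_m(L)$, we extend $M_0$ by edges incident to $Q$; WKE only requires $|Q|\le|M|$ and $E(L-Q)\subseteq M$. So the working criterion is: find $Q$ such that $L-Q$ has maximum degree at most $1$, and such that $L$ has a matching of size at least $|Q|$ containing every edge of $L-Q$.

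The structural reduction uses $\Delta(L)\le3$. By a Lovász-type partition, or simply by the greedy argument, the vertex set of a subcubic graph can be split so that one side induces maximum degree at most $1$. Let $Q$ be a minimum set whose removal leaves an induced subgraph of maximum degree at most $1$. Call this the dissociation complement. We then need a matching of size $|Q|$ that uses all edges of $L-Q$ and otherwise matches vertices of $Q$, or pairs among leftover vertices. We would try Hall's theorem on the bipartite graph between $Q$ and the isolated vertices of $L-Q$, plus the edges inside $Q$. Minimality of $Q$ gives each $q\in Q$ at least two neighbours outside $Q$ that would violate the degree condition. With $\delta\ge2$ this should give enough expansion for Hall to hold, except in tightly constrained configurations. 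A counting bound also helps: $|Q|\le |V(L)|/2$ roughly, while $\nu_m(L)\ge\lfloor |V(L)|/2\rfloor$ is not automatic, but for connected graphs with $\delta\ge2$, $\Delta\le3$ and at most $10$ vertices, the Tutte–Berge formula leaves only a few deficient shapes.

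The main obstacle is that this general argument does not close cleanly. The Petersen graph $P$ and the graph $R$ really are exceptions: for $P$ the dissociation complement is too large relative to the edges inside $L-Q$ that must all be matched. So the proof must at some point become finite. I would therefore implement the final step as an exhaustive enumeration. Generate all connected graphs with $5\le n\le10$, $\delta\ge2$ and $\Delta\le3$ up to isomorphism (for example with \texttt{geng -c -d2 -D3}). For each graph, search over all $Q$ with $|Q|\le5$ such that $\Delta(L-Q)\le1$, and test whether the forced edges $E(L-Q)$ extend to a matching of size at least $|Q|$; this extension test is a maximum-matching computation in $L$ with the forced edges contracted or fixed. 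The graphs failing the test should be exactly $P$ and $R$. The hand argument above serves to explain the result and to prune the search. The hardest part to certify is completeness of the enumeration, so I would cross-check the counts of generated graphs against known tables, and independently confirm the non-WKE status of $P$ and $R$. For $P$: any $Q$ with $|Q|\le5$ leaving maximum degree $\le1$ either forces more matching edges than allowed or is impossible since $P$ has dissociation number $\le 5$ with tight constraints. For $R$, a similar argument uses its two $K_4$-minus-edge blocks.
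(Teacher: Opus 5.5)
Your plan is essentially the paper's approach: the paper proves this lemma only by the exhaustive computer check in the supplement. Your enumeration is a complete certificate on its own: orders $5$ to $10$ via \texttt{geng}, orders $3,4$ covered by Lemma~\ref{wke-reduction-ten}, and all $Q$ with $|Q|\le5$ and $\Delta(L-Q)\le1$, where the forced edges $E(L-Q)$ are extended by a maximum matching of $L$ with their endpoints deleted (not contracted). The structural discussion is not needed and is wrong in places---for example, $P$ has an induced matching of size $3$, so its dissociation number is $6$, not at most $5$---but nothing in the argument depends on it.
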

\begin{lemma}\label{subcubic-links-ten}\label{no-r-ten}
Every link of $G$ is isomorphic to $P$, and $d(x)=10$ for every $x\in V(G)$.
\end{lemma}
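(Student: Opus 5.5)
Under the standing hypothesis that every edge has codegree at most $3$, every link $L=G[N(x)]$ satisfies $\Delta(L)\le3$ and $|V(L)|=d(x)\le10$. The plan is to show that every link component is isomorphic to $P$ or $R$, then exclude $R$, and finally read off the degrees.

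\textbf{Step 1 (components are $P$ or $R$).} Let $K$ be a component of the link at $x$. By Lemma~\ref{wke-reduction-ten} and the remark after it, $K$ has at least $5$ vertices and is not WKE, since otherwise it would give an edge reduction. Next we show $\delta(K)\ge2$. If $y\in K$ has $d_K(y)=1$ with neighbor $z$, then the edge $xy$ lies in exactly one triangle, $xyz$. I would then build an edge reduction with $F=\{xy\}$, $S=\{xyz\}$ and $X$ a suitable set of at most two edges. If this does not work directly, I would prove WKE of such a $K$ by induction: deleting a pendant vertex and adding its edge to the matching handles the leaf. As a cleaner alternative, $\delta(K)\ge2$ may already follow from Lemma~\ref{lowstructure-ten}-type robustness, or from noting that a leaf in a link forces low codegree. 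With $\delta(K)\ge2$, $\Delta(K)\le3$ and $|V(K)|\le10$, Lemma~\ref{subcubic-basis-ten} gives $K\cong P$ or $K\cong R$. Both have $10$ vertices, so $K$ is the whole link, $d(x)=10$, and $L\cong P$ or $L\cong R$.

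\textbf{Step 2 (excluding $R$).} This is the main obstacle. Suppose the link at $x$ is $R$. Let $a,b$ be the two subdivision vertices, joined by the edge $ab$. Every other vertex has degree $3$ in $R$, while $d_R(a)=d_R(b)=3$ as well, since each is adjacent to two $K_4$ vertices and to the other. First I would look at the neighbouring vertex $y$: the link of $y$ is again $P$ or $R$, and $N(x)\cap N(y)$ is the $R$-neighbourhood of $y$. For $y$ in a $K_4$ block, $y$ together with its three $R$-neighbours spans the triangles of $K_4$ minus an edge inside $N(x)$. These induce a triangle in the link of $y$ containing $x$'s neighbours, but $P$ is triangle-free. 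So every such $y$ has link $R$, and the triangle must sit inside a $K_4$ block of $R$ in $y$'s link.

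I would try to propagate this structure into a contradiction, using the bounded degree $10$. Alternatively, I would find a direct reduction for $U=\{x\}\cup B_1$, where $B_1$ is one $K_4$ block ($K_4$ minus an edge, plus its subdivision vertex). The near-$K_4$ blocks allow packing many triangles, $xK_4$ giving edge-disjoint triangles within $N[x]\cup$ block, while the few outgoing edges are cheap to cover. This is exactly the $\chi$-colouring accounting of the Local reductions subsection, and it needs a finite check that $|X_J|+\sum(10-d_J(z))\le2|S|$.

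\textbf{Step 3 (conclusion).} Once $R$ is excluded, every link is $P$ and $d(x)=10$ for all $x$. Vertices with no triangle cannot occur, since their links would be edgeless and would reduce.
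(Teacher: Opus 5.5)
\textbf{Step 1.} Your Step 1 works if you commit to your first option. If $y$ is a leaf of a link component at $x$ with neighbour $z$, then $xyz$ is the only triangle on $xy$. So $F=\{xy\}$, $S=\{xyz\}$, $X=\{xz,yz\}$ is an edge reduction. Lemma~\ref{subcubic-basis-ten} then forces each component to be $P$ or $R$, and hence the whole link. This is quicker than the paper's route, which proves that connected subcubic graphs of order at most $10$ with a leaf are WKE. Your fallback induction is wrong as stated: deleting only the pendant vertex $a$ and adding $ab$ to the matching fails when $b$ is already matched in the smaller witness. The paper deletes both $a$ and $b$ and puts $b$ into $Q$.

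\textbf{Step 2 is the gap.} This is the heart of the lemma, and you never exclude $R$. Your observation is correct: each diamond vertex of the link at $x$ has a link containing a triangle, so that link is $R$. But ``propagate this structure into a contradiction'' is not an argument, and the degree bound $10$ is not the relevant ingredient. The alternative reduction for $\{x\}\cup B_1$ comes with no packing, no cover and no count, so nothing is established there.

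\textbf{The missing idea.} The number of triangles through $y$ in the link at $x$ equals the number of copies of $K_4$ containing $xy$. This count is symmetric, so it also equals the number of triangles through $x$ in the link at $y$. Three facts then finish the job:
\begin{itemize}
\item In a graph with maximum degree $3$, the vertices of a triangle lie in $(1,1,1)$, $(2,2,1)$ or $(3,3,3)$ triangles respectively.
\item Every triangle of $R$ has pattern $(2,2,1)$.
\item $P$ has no triangles.
\end{itemize}

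\textbf{The argument.} Take a diamond in the link at $x$ with centers $a,b$ and tips $c,d$.
\begin{itemize}
\item The edges $xa$, $xb$, $ab$ each lie in two copies of $K_4$, and $xc$ lies in one.
\item In the link at $a$, the triangle $xbc$ has counts $2,2$ at $x,b$. So $c$ has count $1$, meaning $ac$ lies in exactly one copy of $K_4$. Likewise $bc$ lies in exactly one.
\item Hence the triangle $xab$ in the link at $c$ has pattern $(1,1,1)$, which is impossible in $P$ or $R$.
\end{itemize}
Without this argument or a fully worked replacement, your proof does not show that every link is $P$.
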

\begin{proof}
For a connected $L$ with $\Delta(L)\le3$, $|V(L)|\le10$ and leaf $a$ adjacent to $b$, induction and Lemmas \ref{wke-reduction-ten} and \ref{subcubic-basis-ten} give WKE witnesses in each component of $L-\{a,b\}$.
Combine them and add $ab$ to $M$ and $b$ to $Q$.
\leavevmode\vspace{0.5em} \\
So every connected link of order at most $9$ is WKE.
A disconnected link has two components of order $5$ and either of them gives an edge reduction.
So every link is $P$ or $R$.
\leavevmode\vspace{0.5em} \\
Assume the link at $x$ contains a diamond with centers $a,b$ and tips $c,d$.
The common neighbors of $ax$ are $b,c,d$, and those of $ab$ are $x,c,d$.
So $ax,bx,ab$ each lie in $2$ copies of $K_4$.
\leavevmode\vspace{0.5em} \\
The edge $xc$ lies in $1$ copy of $K_4$.
Its third common neighbor can't meet $a$ or $b$, since $ax,bx,ab$ have codegree $3$.
\leavevmode\vspace{0.5em} \\
In a graph with $\Delta\le3$, two triangles through a vertex share an edge.
If triangles extend two edges of a fixed triangle, their fourth vertices coincide and give $K_4$.
So the counts of triangles through the three vertices of a triangle are
\[
(1,1,1),\,(2,2,1),\,(3,3,3).
\]
In the link at $a$, the triangle $xbc$ has counts $2$ at $x,b$ and hence $1$ at $c$.
So $ac$ lies in $1$ copy of $K_4$, and applying the same argument at $b$ gives the same count for $bc$.
\leavevmode\vspace{0.5em} \\
The triangle $xab$ in the link at $c$ has counts $(1,1,1)$.
But $P$ has no triangle and every triangle of $R$ has counts $(2,2,1)$.
So no link is $R$, and every link is $P$.
\end{proof}
\begin{lemma}\label{petersen-pair-ten}
If every link of $G$ is isomorphic to $P$, then $G$ has a reducible pair.
\end{lemma}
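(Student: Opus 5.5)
The plan is to take an arbitrary edge $uv$, show that the local graph $J$ is essentially unique up to the edges $E(A,B)$, and then exhibit one explicit packing and cover, applying Lemma \ref{interval-ten} or the colouring criterion of Section \ref{reductions-ten}.

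\textbf{Step 1: the structure of $J$.} Since every link is $P$, Lemma \ref{subcubic-links-ten} gives that $G$ is $10$-regular and every edge has codegree exactly $3$. Because $P$ is triangle-free, $G$ is $K_4$-free.

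Fix $uv\in E(G)$. In the link at $u$ (a copy of $P$), $v$ is a vertex and $C$ is its neighbourhood. So $C=\{c_1,c_2,c_3\}$ is independent and $|A|=6$. Moreover $G[A]=P-N_P[v]$ is a $6$-cycle $a_1\cdots a_6$, and each $c_i$ is adjacent to the antipodal pair $a_i,a_{i+3}$. Symmetrically, $G[B]$ is a $6$-cycle $b_1\cdots b_6$ with $c_i$ adjacent to an antipodal pair of it.

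Thus $J$ is determined up to two things: the edges of $E(A,B)$, and which antipodal pair of $B$ is attached to which $c_i$. The $C$--$A$ incidences are fixed by $P$. The pairs attached to $C$ in $B$ can be relabelled by the dihedral symmetries of the hexagon $B$ together with the symmetry of $P$ fixing $v$. That stabiliser contains $S_3$ acting on $C$, so up to relabelling there is a single configuration.

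We also record that $d_J(c_i)=6$, namely $u$, $v$, two vertices of $A$ and two of $B$, so each $c_i$ has $4$ neighbours outside $J$. Every $a\in A$ has codegree $3$ with $u$, and the three common neighbours already lie in $A\cup C$.

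\textbf{Step 2: the template.} Set $J^-$ to be $J$ without $E(A,B)$. Set $J^+$ to be $J$ with all of $E(A,B)$ added, with $A$ and $B$ coloured oppositely, as in Section \ref{reductions-ten}. With that colouring, edges of $E(A,B)$ cost nothing and no triangle at $u$ or $v$ uses them. Lemma \ref{interval-ten} then reduces the claim to one finite template.

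In the template we need a packing $S$ and an edge set $X$ that meet two conditions. First, $X$ must meet all $15+15-3=27$ triangles at $u$ or $v$; these are the edges of the two Petersen links, with the triangles $uvc_i$ counted once. Second, we need $|X|\le 2|S|$ and $E(S)\setminus E(\{u,v\},V)\subseteq X$.

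\textbf{Step 3: choosing $S$.} Take a perfect matching $M_u$ of the link at $u$ containing $vc_1$, and a perfect matching $M_v$ of the link at $v$ containing $uc_1$. This gives $S$ with $|S|=9$: five triangles at $u$, five at $v$, and $uvc_1$ shared.

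The budget is then $18$. The $8$ link edges of $S$ not containing $u$ or $v$ must lie in $X$. The remaining $10$ edges of $X$ must hit the other triangles.

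The natural choice is a colouring $\chi$ with $\chi(u)\ne\chi(v)$ and a $2$-colouring of the $A$-hexagon and the $B$-hexagon. The aim is that the monochromatic edges number at most $18$ and contain the matching edges.

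If this fails, the fallback is to enlarge to $U=\{u,v\}\cup Z$ with $Z\subseteq C$. Each $z\in Z$ costs $10-d_J(z)=4$ outside edges, and every edge at $z$ then becomes free in the colour count. This lets us drop the monochromatic constraint on the edges at $C$, at the price of needing more packed triangles through $C$ and its outside neighbours.

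\textbf{Main obstacle.} The hard step is Step 3: actually finding the matchings and the colouring that meet the budget. The bound $|X|\le2|S|$ is tight here, since two Petersen links sharing three triangles leave little slack.

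I would first try this by hand, exploiting the $S_3$ symmetry on $C$. Failing that, I would do a short exhaustive search over the (unique) template, in the same spirit as the verification for Lemma \ref{subcubic-basis-ten}.
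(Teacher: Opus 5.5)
\textbf{The gap is Step 3, which is the whole content of the lemma.} You never exhibit a cover, and the colouring mechanism you propose cannot meet the budget as you set it up. Suppose $A$ and $B$ receive opposite constant colours, as in Step 2 (this is what makes $E(A,B)$ free). Then for every $\chi$ with $\chi(u)\ne\chi(v)$:
\begin{itemize}
\item all $12$ hexagon edges are monochromatic;
\item exactly two of the four edges from each $c_i$ to $A\cup B$ are monochromatic;
\item exactly one of $uc_i,vc_i$ is monochromatic.
\end{itemize}
So $|X_J|\ge12+6+3=21>18=2|S|$. Moreover $9$ is already the largest possible packing of triangles at $u$ or $v$.

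The fallback through $Z\subseteq C$ rests on a misreading. In the bound $|X_J|+\sum_{z\in Z}(10-d_J(z))$ the monochromatic edges at $z$ still count; only \emph{adding} edges at $z$ is cost-neutral. Deferring the remainder to a hand or computer search leaves the statement unproved.

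\textbf{The missing idea is a direct cover, and your own $S$ admits one.} Since $P$ has girth $5$ and no Hamiltonian cycle, deleting $M_u$ from the link at $u$ leaves two pentagons joined by $M_u$. Because $vc_1\in M_u$ and $C$ is independent, these are $v\,c_2\,x\,y\,c_3$ with $x,y\in A$, and $c_1\,a_1\,a_2\,a_3\,a_4$ with $a_i\in A$. Likewise the link at $v$ minus $M_v$ consists of $u\,c_2\,x'\,y'\,c_3$ and $c_1\,b_1\,b_2\,b_3\,b_4$ inside $\{u\}\cup C\cup B$. Add to the eight forced edges
\[
uc_1,\ ua_2,\ ua_3,\ vb_1,\ vb_2,\ vb_4,\ uc_2,\ uy,\ vc_3,\ vx'.
\]
This covers all $27$ triangles at $u$ or $v$:
\begin{itemize}
\item the forced edges cover the triangles on $M_u\setminus\{vc_1\}$ and $M_v\setminus\{uc_1\}$;
\item $uc_1,ua_2,ua_3$ cover $uvc_1$ and the five triangles at $u$ on the edges of $c_1a_1a_2a_3a_4$;
\item $vb_1,vb_2,vb_4$ cover the five triangles at $v$ on the edges of $c_1b_1b_2b_3b_4$;
\item the remaining eight triangles form the cyclic chain $uvc_2,uc_2x,uxy,uyc_3,uvc_3,vc_3y',vx'y',vc_2x'$, in which consecutive triangles share an edge at $u$ or $v$, so the alternate shared edges $uc_2,uy,vc_3,vx'$ cover all of them.
\end{itemize}
This gives $|X|=18=2|S|$ in every configuration, so neither the uniqueness argument of Step 1 nor a search is needed.

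This is a different construction from the paper's. The paper packs $7$ triangles against a $14$-edge cover read off one fixed labelling. It then uses the symmetries of the Kneser model of $P$ to make that labelling valid in both links for every order of the common neighbours.
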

\begin{proof}
Choose $uv\in E(G)$ with common neighbors $c_0,c_1,c_2$, and write the other neighbors as $a_0,\ldots,a_5$ at $u$ and $b_0,\ldots,b_5$ at $v$.
The common neighbors are independent.
\leavevmode\vspace{0.5em} \\
Use the neighborhood labels in the right drawing, replacing $a_i$ by $b_i$ and exchanging $c_1,c_2$ at $v$.
Represent $P$ by the two element subsets of $\{1,2,3,4,5\}$, with disjoint subsets adjacent.
The label permutations act transitively on vertices and give every permutation of the three neighbors while fixing their vertex.
\leavevmode\vspace{0.5em} \\
So these labels work for both links in any order of their common neighbors.
\leavevmode\vspace{0.5em} \\
Take $U=\{u,v\}$, the $7$ red triangles as $S$ and the $14$ blue edges as $X$.
\begin{center}
\begin{minipage}{\textwidth}
\centering\setlength{\tabcolsep}{3pt}
\begin{tabular}{cc}
\begin{tikzpicture}[x=0.53cm,y=0.61cm,
every node/.style={circle,draw,fill=white,inner sep=1.1pt,font=\scriptsize}]
\coordinate (u) at (-1.8,0);
\coordinate (v) at (1.8,0);
\coordinate (c0) at (0,2.8);
\coordinate (c1) at (0,-0.6);
\coordinate (c2) at (0,-2.1);
\coordinate (a0) at (-5.3,-2.6);
\coordinate (a1) at (-4,-1.0);
\coordinate (a2) at (-5.3,0.7);
\coordinate (a5) at (-4.3,0.94);
\coordinate (a3) at (-5.3,2);
\coordinate (a4) at (-4,2);
\coordinate (b0) at (5.3,-2.6);
\coordinate (b1) at (4,-0.6);
\coordinate (b2) at (5.3,0.7);
\coordinate (b5) at (4.3,0.94);
\coordinate (b3) at (5.3,2);
\coordinate (b4) at (4,2);
\fill[red!12] (u)--(c0)--(v)--cycle;
\fill[red!12] (v)--(c1)--(b1)--cycle;
\fill[red!12] (u)--(c2)--(a1)--cycle;
\fill[red!12] (u)--(a2)--(a5)--cycle;
\fill[red!12] (u)--(a3)--(a4)--cycle;
\fill[red!12] (v)--(b2)--(b5)--cycle;
\fill[red!12] (v)--(b3)--(b4)--cycle;
\draw[red!75!black,line width=0.7pt] (a1)--(c2);
\draw[red!75!black,line width=0.7pt] (a1)--(u);
\draw[red!75!black,line width=0.7pt] (a2)--(a5);
\draw[red!75!black,line width=0.7pt] (a2)--(u);
\draw[red!75!black,line width=0.7pt] (a3)--(a4);
\draw[red!75!black,line width=0.7pt] (a3)--(u);
\draw[red!75!black,line width=0.7pt] (a4)--(u);
\draw[red!75!black,line width=0.7pt] (a5)--(u);
\draw[red!75!black,line width=0.7pt] (b1)--(c1);
\draw[red!75!black,line width=0.7pt] (b1)--(v);
\draw[red!75!black,line width=0.7pt] (b2)--(b5);
\draw[red!75!black,line width=0.7pt] (b2)--(v);
\draw[red!75!black,line width=0.7pt] (b3)--(b4);
\draw[red!75!black,line width=0.7pt] (b3)--(v);
\draw[red!75!black,line width=0.7pt] (b4)--(v);
\draw[red!75!black,line width=0.7pt] (b5)--(v);
\draw[red!75!black,line width=0.7pt] (c0)--(u);
\draw[red!75!black,line width=0.7pt] (c0)--(v);
\draw[red!75!black,line width=0.7pt] (c1)--(v);
\draw[red!75!black,line width=0.7pt] (c2)--(u);
\draw[red!75!black,line width=0.7pt] (u)--(v);
\node at (u) {$u$};
\node at (v) {$v$};
\node at (c0) {$c_{0}$};
\node at (c1) {$c_{1}$};
\node at (c2) {$c_{2}$};
\node at (a0) {$a_{0}$};
\node at (a1) {$a_{1}$};
\node at (a2) {$a_{2}$};
\node at (a3) {$a_{3}$};
\node at (a4) {$a_{4}$};
\node at (a5) {$a_{5}$};
\node at (b0) {$b_{0}$};
\node at (b1) {$b_{1}$};
\node at (b2) {$b_{2}$};
\node at (b3) {$b_{3}$};
\node at (b4) {$b_{4}$};
\node at (b5) {$b_{5}$};
\end{tikzpicture}&
\begin{tikzpicture}[x=0.53cm,y=0.61cm,
every node/.style={circle,draw,fill=white,inner sep=1.1pt,font=\scriptsize}]
\coordinate (u) at (-1.8,0);
\coordinate (v) at (1.8,0);
\coordinate (c0) at (0,2.8);
\coordinate (c1) at (0,-0.6);
\coordinate (c2) at (0,-2.1);
\coordinate (a0) at (-5.3,-2.6);
\coordinate (a1) at (-4,-1.0);
\coordinate (a2) at (-5.3,0.7);
\coordinate (a5) at (-4.3,0.94);
\coordinate (a3) at (-5.3,2);
\coordinate (a4) at (-4,2);
\coordinate (b0) at (5.3,-2.6);
\coordinate (b1) at (4,-0.6);
\coordinate (b2) at (5.3,0.7);
\coordinate (b5) at (4.3,0.94);
\coordinate (b3) at (5.3,2);
\coordinate (b4) at (4,2);
\draw[black!28,line width=0.25pt] (a0) .. controls (-6.4,-2.6) and (-6.4,2) .. (a3);
\draw[black!28,line width=0.25pt] (a0)--(a5);
\draw[black!28,line width=0.25pt] (a0)--(c2);
\draw[black!28,line width=0.25pt] (a0)--(u);
\draw[black!28,line width=0.25pt] (a1)--(a2);
\draw[black!28,line width=0.25pt] (a1) .. controls (-3.15,-1) and (-3.15,2) .. (a4);
\draw[black!28,line width=0.25pt] (a1)--(c2);
\draw[black!28,line width=0.25pt] (a1)--(u);
\draw[black!28,line width=0.25pt] (a2)--(a5);
\draw[black!28,line width=0.25pt] (a2) .. controls (-4,-1.2) and (0,-1.2) .. (c1);
\draw[black!28,line width=0.25pt] (a2)--(u);
\draw[black!28,line width=0.25pt] (a3)--(a4);
\draw[black!28,line width=0.25pt] (a3) .. controls (-4.2,3.5) and (-1.5,2.8) .. (c1);
\draw[black!28,line width=0.25pt] (a3)--(u);
\draw[black!28,line width=0.25pt] (a4)--(c0);
\draw[black!28,line width=0.25pt] (a4)--(u);
\draw[black!28,line width=0.25pt] (a5)--(c0);
\draw[black!28,line width=0.25pt] (a5)--(u);
\draw[black!28,line width=0.25pt] (b0) .. controls (6.4,-2.6) and (6.4,2) .. (b3);
\draw[black!28,line width=0.25pt] (b0)--(b5);
\draw[black!28,line width=0.25pt] (b0)--(c1);
\draw[black!28,line width=0.25pt] (b0)--(v);
\draw[black!28,line width=0.25pt] (b1)--(b2);
\draw[black!28,line width=0.25pt] (b1) .. controls (3.15,-0.6) and (3.15,2) .. (b4);
\draw[black!28,line width=0.25pt] (b1)--(c1);
\draw[black!28,line width=0.25pt] (b1)--(v);
\draw[black!28,line width=0.25pt] (b2)--(b5);
\draw[black!28,line width=0.25pt] (b2)--(c2);
\draw[black!28,line width=0.25pt] (b2)--(v);
\draw[black!28,line width=0.25pt] (b3)--(b4);
\draw[black!28,line width=0.25pt] (b3) .. controls (5.1,-0.2) and (1,-1.8) .. (c2);
\draw[black!28,line width=0.25pt] (b3)--(v);
\draw[black!28,line width=0.25pt] (b4)--(c0);
\draw[black!28,line width=0.25pt] (b4)--(v);
\draw[black!28,line width=0.25pt] (b5)--(c0);
\draw[black!28,line width=0.25pt] (b5)--(v);
\draw[black!28,line width=0.25pt] (c0)--(u);
\draw[black!28,line width=0.25pt] (c0)--(v);
\draw[black!28,line width=0.25pt] (c1)--(u);
\draw[black!28,line width=0.25pt] (c1)--(v);
\draw[black!28,line width=0.25pt] (c2)--(u);
\draw[black!28,line width=0.25pt] (c2)--(v);
\draw[black!28,line width=0.25pt] (u)--(v);
\draw[blue!75!black,line width=0.8pt] (a0)--(u);
\draw[blue!75!black,line width=0.8pt] (a1)--(c2);
\draw[blue!75!black,line width=0.8pt] (a1)--(u);
\draw[blue!75!black,line width=0.8pt] (a2)--(a5);
\draw[blue!75!black,line width=0.8pt] (a3)--(a4);
\draw[blue!75!black,line width=0.8pt] (b0)--(v);
\draw[blue!75!black,line width=0.8pt] (b1)--(c1);
\draw[blue!75!black,line width=0.8pt] (b1)--(v);
\draw[blue!75!black,line width=0.8pt] (b2)--(b5);
\draw[blue!75!black,line width=0.8pt] (b3)--(b4);
\draw[blue!75!black,line width=0.8pt] (c0)--(u);
\draw[blue!75!black,line width=0.8pt] (c0)--(v);
\draw[blue!75!black,line width=0.8pt] (c1)--(u);
\draw[blue!75!black,line width=0.8pt] (c2)--(v);
\node at (u) {$u$};
\node at (v) {$v$};
\node at (c0) {$c_{0}$};
\node at (c1) {$c_{1}$};
\node at (c2) {$c_{2}$};
\node at (a0) {$a_{0}$};
\node at (a1) {$a_{1}$};
\node at (a2) {$a_{2}$};
\node at (a3) {$a_{3}$};
\node at (a4) {$a_{4}$};
\node at (a5) {$a_{5}$};
\node at (b0) {$b_{0}$};
\node at (b1) {$b_{1}$};
\node at (b2) {$b_{2}$};
\node at (b3) {$b_{3}$};
\node at (b4) {$b_{4}$};
\node at (b5) {$b_{5}$};
\end{tikzpicture}\\[0.3em]
The packing $S$&The cover $X$
\end{tabular}

\end{minipage}
\end{center}

The $7$ red triangles don't share any edges, and $X$ contains every packed edge outside $U$.
The blue edges incident with $c_0,c_1,c_2$ cover every triangle on $uv$.
At $u$, the edges $ua_0,ua_1$ cover every triangle using $a_0$ or $a_1$.
\leavevmode\vspace{0.5em} \\
Of the other edges between the $a_i$, only $a_2a_5,a_3a_4$ remain, and both lie in $X$.
The only common neighbor $c_i$ with $uc_i\notin X$ is $c_2$, and its other neighbors in the link are $a_0,a_1$.
The same argument works at $v$ after exchanging $c_1,c_2$.
\leavevmode\vspace{0.5em} \\
Edges between the two private sets lie in no triangle at $u$ or $v$.
So $X$ covers every triangle meeting $U$, with $|X|=14=2|S|$, and $U$ is reducible.
\end{proof}
So $G$ has an edge of codegree at least $4$.

\subsection{Vertices of small degree}
\begin{lemma}\label{five-component-ten}
If every edge of $G$ has codegree at most $4$, then no link component has order $5$.
\end{lemma}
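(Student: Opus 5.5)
The plan is to turn a link component of order $5$ into an explicit reduction, contradicting the minimality of $G$. Fix $x$ and a component $K$ of $G[N(x)]$ with $|V(K)|=5$. By Lemma \ref{wke-reduction-ten}, $K$ is not WKE, or it would give an edge reduction. The key observation is that for $y\in V(K)$ the codegree of $xy$ equals $d_K(y)$, because every common neighbor of $x$ and $y$ lies in $N(x)$ and is adjacent to $y$, hence lies in $K$. So the hypothesis gives $\Delta(K)\le4$, which on five vertices is no restriction at all. The real content must therefore come from looking at the links of the vertices of $K$ as well.

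\textbf{Step 1: the non-WKE graphs.} First I would enumerate the connected graphs on $5$ vertices that are not WKE. This is a finite check in the same spirit as Lemma \ref{subcubic-basis-ten}. For each candidate one tries $|M|=2$ with $|Q|\le2$, or $|M|=1$ with $|Q|\le1$. I expect the survivors to be a short list of dense graphs: $K_5$, $K_5-e$, the wheel $K_1+C_4$, and possibly a few others containing a $K_4$ or a wheel. In particular each survivor should have at least two vertices $y$ with $d_K(y)=4$, or at least many edges of $K$ whose ends both have degree $3$ or more. I would record, for each survivor, a vertex $y$ of maximum degree and its neighborhood in $K$.

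\textbf{Step 2: propagate to a neighbor.} Take $y\in V(K)$ with $d_K(y)=4$, so $xy$ has codegree $4$ and hence $N(x)\cap N(y)=V(K)\setminus\{y\}$. In the link at $y$, the vertex $x$ is adjacent exactly to $V(K)\setminus\{y\}$. For $z\in V(K)\setminus\{y\}$, the codegree of $yz$ is at most $4$. Since $yz$ already has $x$ and $d_K(z)-1$ common neighbors inside $K$, $z$ has at most $4-d_K(z)$ neighbors in $N(y)\setminus N[x]$. For the dense survivors this is $0$ or $1$. So the component of $x$ in the link at $y$ is $K$ with $y$ replaced by $x$, plus a few pendant attachments. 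Either that component has order $5$ and is another non-WKE graph isomorphic to $K$, or it is a slightly larger graph. I would check that such a larger graph is WKE: the attachments are leaves or near-leaves, so the leaf-peeling argument from the proof of Lemma \ref{subcubic-links-ten} applies. A WKE component here gives an edge reduction by Lemma \ref{wke-reduction-ten}, which is a contradiction. This leaves the case where $\{x\}\cup V(K)$ induces a clique-like configuration closed under the codegree bound. In the $K_5$ case this is a $K_6$ on $N[x]\cap(\{x\}\cup V(K))$ that none of its edges can extend beyond.

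\textbf{Step 3: a local reduction on the dense configuration.} In the remaining case I would take $U=\{x\}\cup V(K)$, or $U=\{x,y\}$ with $Z\subseteq C$, and apply the $\chi$-coloring scheme of Lemma \ref{interval-ten}. Triangles at $x$ all lie inside $\{x\}\cup V(K)$. Because every edge of the configuration has codegree at most $4$, each vertex of $K$ has few outside edges lying in triangles with $U$. I would pack about $\lfloor |E(\{x\}\cup V(K))|/3\rfloor$ triangles inside the configuration, for instance $3$ or $4$ edge-disjoint triangles in $K_6$ or its subgraphs. For $X$ I would take a bipartition cover $X_J$ of the configuration together with the outside edges at the vertices of $Z$. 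The budget inequality $|X_J|+\sum_{z\in Z}(10-d_J(z))\le2|S|$ then has to be checked case by case.

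The main obstacle is Step 3. The outside degrees of vertices of $K$ can be as large as $10-5=5$, so putting all of $K$ into $U$ is expensive. The right choice of $U$ must instead exploit that the codegree bound forces those outside edges to lie in few triangles with $U$. I would first try $U=\{x,y\}$ with $Z=\varnothing$ and a coloring $\chi$ that puts the private sets on opposite sides. Only if the count fails would I enlarge $Z$ by the vertices of $K$ of largest $d_J$.
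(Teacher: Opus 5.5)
\textbf{The gap is Step 3.} You leave it open, and the reduction you aim for there is the wrong kind. Your claim that triangles at $x$ lie inside $\{x\}\cup V(K)$ holds only for triangles using $K$.
\begin{itemize}
\item Any vertex reduction with $x\in U$ must cover every triangle at $x$. That includes the other components of the link at $x$, about which you know only that they are non-WKE of order at least $5$.
\item With $y\in U$ you must also cover the triangles of $y$ inside $N(y)\setminus N[x]$.
\item Every vertex of $K$ placed in $U$ or $Z$ brings in triangles spanned by two of its outside neighbors. The codegree bound does not restrict these at all.
\end{itemize}
Your budget accounts for none of this, and no choice of $\chi$ repairs it.

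The missing idea is to use an \emph{edge} reduction with $F=E(G[\{x\}\cup V(K)])$, so that only triangles using an edge of $F$ have to be hit. Two facts localize these triangles:
\begin{itemize}
\item a triangle on $xz$ with $z\in V(K)$ has its third vertex in $K$ (your own observation);
\item an edge with $4$ common neighbors inside $\{x\}\cup V(K)$ has none outside, by codegree at most $4$.
\end{itemize}
Every other edge of $F$ is simply put into $X$. This gives explicit reductions with $|S|=4$ and $|X|\le 8$, and no outside edges need to be bought:
\begin{itemize}
\item For $K=K_5$, label the $K_6$ by $1,\dots,6$. Pack $123,145,246,356$ and let $X$ be the triangles on $\{1,2,3\}$ and $\{4,5,6\}$, leaving the triangle-free graph $K_{3,3}$.
\item For $K=K_5-ab$ with other vertices $u,c,d$, pack $axc,aud,bxd,buc$ and take $X=E(\{a,b\},\{u,c,d\})\cup\{cd,xu\}$.
\item For $K=W_4$ with center $w$ and rim $1,2,3,4$, pack $x12,x34,w14,w23$ and take $X=E(K)$.
\end{itemize}

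\textbf{Step 2 should be dropped.} Your claim that the enlarged component of $x$ in the link at $y$ is WKE by leaf-peeling is unsupported. Vertices of $N(y)\setminus N[x]$ may be adjacent to each other and to several vertices of $K$. The component is also not subcubic, since $x$ has degree $4$ there. So neither Lemma \ref{subcubic-basis-ten} nor the peeling induction applies. And when the component has order $5$ you are back where you started.

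\textbf{Step 1 has the right outcome but should be proved, not enumerated with a hedge.} A star is WKE; otherwise $\nu_m(K)=2$. A triple spanning at most one edge then gives a witness with $|Q|=2$: either its complement together with a second matching edge, or the complement of an independent triple. So every triple spans at least two edges, hence $\Delta(\overline K)\le 1$ and $K\in\{K_5,K_5-e,W_4\}$, none of which is WKE.
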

\begin{proof}
Let $L$ be connected with $|V(L)|=5$ and not WKE.
A star is WKE, and otherwise $\nu_m(L)=2$.
An independent triple gives a cover of size $2$ and a WKE witness.
\leavevmode\vspace{0.5em} \\
Say a triple spans only $ab$, with third vertex $c$ and complement $Q$.
An edge in $L[Q\cup\{c\}]$ forms a matching with $ab$, and $Q$ covers every other edge.
If no such edge exists, $Q\cup\{c\}$ is independent.
\leavevmode\vspace{0.5em} \\
So every triple spans at least $2$ edges and $\Delta(\overline L)\le1$.
Thus $L\in\{K_5,K_5-e,W_4\}$, and none is WKE.
\leavevmode\vspace{0.5em} \\
Each edge $ab$ in a row headed $h$ indicates the triangle $hab$.
\leavevmode\vspace{0.5em} \\
Let $L$ be a component of the link at $x$.
\leavevmode\vspace{0.5em} \\
\noindent\textbf{Case 1: $L=K_5$.}\vspace{0.5em} \\*
Adding $x$ gives a $K_6$.
Each edge has $4$ common neighbors there, so no triangle using its edges has a vertex outside the $K_6$.
\leavevmode\vspace{0.5em} \\
Label the $K_6$ by $1,\ldots,6$.
Delete $E(K_6)$ and take the below packing with
\[
X=E(G[\{1,2,3\}])\cup E(G[\{4,5,6\}]),\,|X|=6\le8=2|S|.
\]
The graph $K_6-X$ is bipartite, so this gives an edge reduction.
\begin{center}
\begin{minipage}{\textwidth}
\centering\setlength{\tabcolsep}{3pt}
\begin{tabular}{@{}c@{}}
\begin{tabular}{@{}l@{}}
\PackingRow{1}{2/3,4/5}\\[0.18em]
\PackingRow{2}{4/6}\\[0.18em]
\PackingRow{3}{5/6}\\[0.18em]
\end{tabular}
\end{tabular}

\end{minipage}
\end{center}
\leavevmode\vspace{0.5em} \\
\noindent\textbf{Case 2: $L=K_5-ab$.}\vspace{0.5em} \\*
Write its other vertices as $u,c,d$.
Take the below packing and
\[
X=E(\{a,b\},\{u,c,d\})\cup\{cd,xu\},\,|X|=8=2|S|.
\]
\begin{center}
\begin{minipage}{\textwidth}
\centering\setlength{\tabcolsep}{3pt}
\begin{tabular}{@{}c@{}}
\begin{tabular}{@{}l@{}}
\PackingRow{a}{x/c,u/d}\\[0.18em]
\PackingRow{b}{x/d,u/c}\\[0.18em]
\end{tabular}
\end{tabular}

\end{minipage}
\end{center}
\leavevmode\vspace{0.5em} \\
Delete all edges of this $K_6-ab$.
A triangle with a vertex outside the $K_6-ab$ can't use an edge within $\{x,u,c,d\}$ by the codegree bound, or $xa,xb$ by the link component condition.
The set $X$ covers every other triangle using a deleted edge.
\leavevmode\vspace{0.5em} \\
\noindent\textbf{Case 3: $L=W_4$.}\vspace{0.5em} \\*
Write $w$ for the center and $1,2,3,4$ for the cycle vertices in order.
Delete all edges on $V(L)\cup\{x\}$ and take the below packing with $X=E(L)$.
We have $|X|=8=2|S|$.
\begin{center}
\begin{minipage}{\textwidth}
\centering\setlength{\tabcolsep}{3pt}
\begin{tabular}{@{}c@{}}
\begin{tabular}{@{}l@{}}
\PackingRow{x}{1/2,3/4}\\[0.18em]
\PackingRow{w}{1/4,2/3}\\[0.18em]
\end{tabular}
\end{tabular}

\end{minipage}
\end{center}
\leavevmode\vspace{0.5em} \\
No triangle on $xw$ has a vertex outside this graph by the codegree bound, and none on an edge at $x$ does by the link component condition.
The set $X$ covers every other triangle using a deleted edge.
\end{proof}
Let $G[N(x)]\cong K_{2,2,2}$, take $u\in N(x)$, and write $w$ for the vertex of $N(x)$ not adjacent to $u$.
Label
\[
x=0,\,u=1,\,w=2,\,C=\{3,4,5,6\},\,C\text{ has parts }\{3,4\},\{5,6\}.
\]
Write $B=N(u)\setminus\bigl(C\cup\{x\}\bigr)$.
Packings A, B and C pair the below triangles $S$ with the blue cover $X$.
Each $X$ contains every packed edge outside $\{x,u\}$, and relabeling the cycle on $C$ lets Packing B leave any prescribed $uq$ unused.
\begin{center}
\begin{minipage}{\textwidth}
\centering
\setlength{\tabcolsep}{3pt}
\begin{tabular}{@{}ccc@{}}
\begin{minipage}[t]{0.30\textwidth}\centering\textbf{A} $|S|=4,\ |X|=6$\\[0.3em]
\begin{tabular}{@{}l@{}}
\PackingRow{0}{3/6,4/5}\\[0.18em]
\PackingRow{1}{3/5,4/6}\\[0.18em]
\end{tabular}\\[0.3em]
\begin{tikzpicture}[baseline=-0.5ex]
\foreach \GraphVertex/\GraphX/\GraphY in {0/0.00000/1.00000,1/0.78183/0.62349,2/0.97493/-0.22252,3/0.43388/-0.90097,4/-0.43388/-0.90097,5/-0.97493/-0.22252,6/-0.78183/0.62349}{\coordinate (p\GraphVertex) at (\GraphX,\GraphY);}
\foreach \GraphFrom/\GraphTo in {0/1,0/2,3/5,3/6,4/5,4/6}{\draw[blue!75!black,line width=0.7pt] (p\GraphFrom)--(p\GraphTo);}
\foreach \GraphVertex/\GraphLabel in {0/{0},1/{1},2/{2},3/{3},4/{4},5/{5},6/{6}}{\node[proof vertex] at (p\GraphVertex) {$\GraphLabel$};}
\end{tikzpicture}\end{minipage}&
\begin{minipage}[t]{0.30\textwidth}\centering\textbf{B} $|S|=4,\ |X|=9$\\[0.3em]
\begin{tabular}{@{}l@{}}
\PackingRow{0}{1/6,2/4,3/5}\\[0.18em]
\PackingRow{1}{4/5}\\[0.18em]
\end{tabular}\\[0.3em]
\begin{tikzpicture}[baseline=-0.5ex]
\foreach \GraphVertex/\GraphX/\GraphY in {0/0.00000/1.00000,1/0.78183/0.62349,2/0.97493/-0.22252,3/0.43388/-0.90097,4/-0.43388/-0.90097,5/-0.97493/-0.22252,6/-0.78183/0.62349}{\coordinate (p\GraphVertex) at (\GraphX,\GraphY);}
\foreach \GraphFrom/\GraphTo in {0/2,0/6,1/3,1/4,1/5,1/6,2/4,3/5,4/5}{\draw[blue!75!black,line width=0.7pt] (p\GraphFrom)--(p\GraphTo);}
\foreach \GraphVertex/\GraphLabel in {0/{0},1/{1},2/{2},3/{3},4/{4},5/{5},6/{6}}{\node[proof vertex] at (p\GraphVertex) {$\GraphLabel$};}
\end{tikzpicture}\end{minipage}&
\begin{minipage}[t]{0.30\textwidth}\centering\textbf{C} $|S|=5,\ |X|=9$\\[0.3em]
\begin{tabular}{@{}l@{}}
\PackingRow{0}{1/3,2/5,4/6}\\[0.18em]
\PackingRow{1}{4/5}\\[0.18em]
\PackingRow{2}{3/6}\\[0.18em]
\end{tabular}\\[0.3em]
\begin{tikzpicture}[baseline=-0.5ex]
\foreach \GraphVertex/\GraphX/\GraphY in {0/0.00000/1.00000,1/0.78183/0.62349,2/0.97493/-0.22252,3/0.43388/-0.90097,4/-0.43388/-0.90097,5/-0.97493/-0.22252,6/-0.78183/0.62349}{\coordinate (p\GraphVertex) at (\GraphX,\GraphY);}
\foreach \GraphFrom/\GraphTo in {0/1,2/3,2/4,2/5,2/6,3/5,3/6,4/5,4/6}{\draw[blue!75!black,line width=0.7pt] (p\GraphFrom)--(p\GraphTo);}
\foreach \GraphVertex/\GraphLabel in {0/{0},1/{1},2/{2},3/{3},4/{4},5/{5},6/{6}}{\node[proof vertex] at (p\GraphVertex) {$\GraphLabel$};}
\end{tikzpicture}\end{minipage}
\end{tabular}

\end{minipage}
\end{center}

\begin{lemma}\label{thin-eight-nine-ten}
If $d(u)\in\{8,9\}$ and $G[N(u)]$ is connected, then $\{x,u\}$ is reducible.
\end{lemma}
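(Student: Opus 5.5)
The plan is to reduce $U=\{x,u\}$ by taking one of Packings A, B, C and extending it with extra triangles at $u$ that use the private set $B$. Packings A, B, C already cover every triangle at $x$. They also cover every triangle at $u$ inside $\{x,u\}\cup C$, since the link at $x$ is $K_{2,2,2}$, the private neighbor of $x$ is $w$, and $u\not\sim w$. So the only triangles left uncovered are those $ubz$ with $b\in B$ and $z\in B\cup C$; note $b\not\sim x$. Here $|B|=d(u)-5\in\{3,4\}$.

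The basic extension adds all edges $ub$, $b\in B$, to $X$, at cost $|B|$. Each triangle $ubb'$ or $ubq$ added to $S$ pays $2$. Such a triangle needs its edges outside $U$ in $X$, so $bb'$ or $bq$ must also be added. It must also be edge disjoint from the chosen packing, which means $uq$ must be a free edge. The budgets of the three packings are:
\begin{itemize}
\item[] Packing A has slack $2$ but uses every $uc$, $c\in C$;
\item[] Packing C has slack $1$ and leaves $u6$ free;
\item[] Packing B has slack $-1$ but leaves any prescribed $uq$ free, by the relabeling noted before the statement.
\end{itemize}
I would then split on the structure of $G[B]$ and of $E(B,C)$.

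First, if $G[B]$ has an edge $bb'$, take Packing A together with the triangle $ubb'$. Put all $ub$ and the edge $bb'$ into $X$. The cost is $6+|B|+1\le 11$, against a budget of $2\cdot5=10$. This is slightly too expensive, so refine it: $bb'$ covers every triangle $ubb'$, so $ub'$ need not be in $X$ when $b'$ has no other neighbor in the link. More robustly, if $G[B]$ has a matching of size $2$ (possible when $|B|=4$), the budget becomes $8+4=12$ against a cost of $6+4+2=12$. The case of a single edge in $G[B]$ is then handled with Packing C, which has slack $1$ plus the free edge $u6$. Since $G[N(u)]$ is connected, each $b$ has a neighbor in $C\cup B$. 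I would use the free edge at $C$ to pack a second triangle $u6b''$ when some $b''\sim 6$, relabeling $C$ by the symmetries of the $4$-cycle so that $6$ is such a neighbor.

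Second, if $B$ is independent, every $b$ has all of its link neighbors in $C$. I would try to avoid paying for $ub$ at all. The alternative is to cover the triangles $ubc$ by the edges $bc$, or by $uc$ when $uc\in X$. This is exactly the $\chi$-coloring bookkeeping with $A,B$ given opposite colors: an edge $bc$ costs one only when $\chi(b)=\chi(c)$. In Packing A the edges $uc$ are not in $X$, but Packing B puts $u3,u4,u5,u6$ into $X$. That already kills every triangle $ubc$ with $c\ne q$. So Packing B plus one triangle $uqb$, with $q$ a neighbor of some $b$, costs $9+1$ (for the edge $bq$) plus the edges $ub'$ for those $b'\sim q$. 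The budget is $10$. So I would choose $q\in C$ to be the vertex of $C$ adjacent to the fewest vertices of $B$ beyond the one packed $b$, which reduces this case to showing that such a $q$ has no further $B$-neighbors.

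The main obstacle is this last configuration. It occurs when $B$ is independent of size $4$ and every vertex of $C$ sees at least two vertices of $B$. Here I would bring in the further constraints:
\begin{itemize}
\item[] degrees in $G$ are at most $10$, so each $c\in C$ has bounded degree, and the codegree of $uc$ is large;
\item[] Lemma \ref{lowstructure-ten} and Lemma \ref{five-component-ten} apply to the links at the vertices of $B$.
\end{itemize}
If these do not close the case, I would enlarge $U$ by some $Z\subseteq C$, using the outside-edge count $\sum_{z\in Z}(10-d_J(z))$, which is small here because the $c$'s have many neighbors inside $J$. I would then search for a packing directly, as the paper does elsewhere, possibly by computer check over the finitely many configurations with $|V(J)|\le 11$.
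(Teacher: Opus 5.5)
Your first case (a matching of size $2$ in $G[B]$ added to Packing A, with $M\cup E(\{u\},B)$ picked) is exactly the paper's. The rest of the proposal rests on a misreading of Packing B.

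Its cover contains all four edges $u3,u4,u5,u6$. Relabeling the cycle on $C$ only ensures that the prescribed $uq$ lies in no triangle of $S$; it does not remove $uq$ from $X$. Nothing requires that. Definition \ref{reducing-ten} constrains only packed edges outside $U$, and Packing B itself packs $016$ and $145$ while $u6,u4,u5\in X$. Hence every triangle $ubc$ with $b\in B$, $c\in C$ is already covered, including $c=q$, and adding $ubq$ costs only the edge $bq$.

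When $B$ is independent, connectivity of $G[N(u)]$ gives a contact $bq$. Packing B plus $ubq$, with $bq$ picked, then has $|S|=5$ and $|X|=10$ with no further condition. Your extra charge for the edges $ub'$ with $b'\sim q$ is spurious, and the configuration you single out as the main obstacle needs no treatment. As written, however, you leave that case open behind an unspecified fallback (further lemmas, enlarging $U$, a computer search), so the proof is not complete.

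The case $\nu_m(G[B])=1$ is also left unproved, and your sketch for it does not work. Take Packing C with all of $E(\{u\},B)$ in $X$. When $|B|=4$, even the two extra triangles $ubb'$ and $u6b''$ give $|X|=9+4+2=15>14=2|S|$. For a star whose only contacts are at its center, no contact triangle avoids the packed private edge.

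The missing idea is again to work in Packing B, where contacts are already covered, and to cover the private triangles with at most one extra edge at $u$. The cases go as follows.
\begin{itemize}
\item A single private edge $bb'$: add $ubb'$ and pick $bb'$, giving $|S|=5$, $|X|=10$.
\item A star with center $a$ and at least two edges, where no vertex of $B\setminus\{a\}$ has a neighbor in $C$: add $ua$ to the cover of Packing C, giving $|S|=5$, $|X|=10$.
\item The same star otherwise: take a contact $bq$ with $b\ne a$ and a star edge $ae$ with $e\ne b$. Add $uae$ and $ubq$ to Packing B with $uq$ unused, and pick $ua,ae,bq$.
\item A triangle $abc$ with isolate $d$: the isolate has a contact $dq$, so add $uab,udq$ and pick $uc,ab,dq$.
\item $|B|=3$ with $B=\{a,b,d\}$ a triangle: take a contact $dq$, add $uab,udq$, and pick $ud,ab,dq$.
\end{itemize}
The last three cases give $|S|=6$, $|X|=12$.
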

\begin{proof}
\leavevmode\vspace{0.5em} \\
\noindent\textbf{Case 1: $\nu_m(G[B])\ge2$.}\vspace{0.5em} \\*
Take a matching $M$ of size $2$ in $G[B]$ and add its $2$ triangles at $u$ to Packing A.
Pick $M\cup E(\{u\},B)$ to get $|S|=6$ and $|X|\le12$, since $|B|\in\{3,4\}$.
\leavevmode\vspace{0.5em} \\
Assume $\nu_m(G[B])\le1$.
\leavevmode\vspace{0.5em} \\
\noindent\textbf{Case 2: $E(G[B])=\varnothing$.}\vspace{0.5em} \\*
Connectivity gives $bq\in E(B,C)$.
Take Packing B with $uq$ not used and add $ubq$ and the cover edge $bq$.
We get $|S|=5$, $|X|=10$.
\leavevmode\vspace{0.5em} \\
\noindent\textbf{Case 3: $G[B]$ is a star with isolates.}\vspace{0.5em} \\*
If $|E(G[B])|=1$, add its triangle at $u$ to Packing B and select the edge.
If $G[B]$ has a star centered at $a$ and no edge joins $B\setminus\{a\}$ to $C$, add $ua$ to the cover of Packing C.
Both choices give $|S|=5$ and $|X|=10$.
\leavevmode\vspace{0.5em} \\
Otherwise take $bq\in E(B\setminus\{a\},C)$ and a star edge $ac$ not incident with $b$.
Add $uac,ubq$ to Packing B with $uq$ not used, and pick $ua,ac,bq$.
We get $|S|=6$, $|X|=12$.
\leavevmode\vspace{0.5em} \\
\noindent\textbf{Case 4: $G[B]$ contains a triangle.}\vspace{0.5em} \\*
If $|B|=4$, write $a,b,c$ for the triangle vertices and $d$ for the isolate.
Connectivity gives $dq\in E(B,C)$.
Add $uab,udq$ to Packing B with $uq$ not used and pick $uc,ab,dq$.
\leavevmode\vspace{0.5em} \\
If $|B|=3$, take $dq\in E(B,C)$ and write $a,b$ for the other vertices of $B$.
Add $uab,udq$ to Packing B with $uq$ not used and pick $ud,ab,dq$.
Both choices give $|S|=6$ and $|X|=12$.
\end{proof}
\begin{lemma}\label{thin-ten-ten}
Assume $d(u)=10$, $G[N(u)]$ is connected and $|N(c)\cap B|\le1$ for every $c\in C$.
Then a reduction removes $x$ or $u$.
\end{lemma}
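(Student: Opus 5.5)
Throughout, $x$ has $G[N(x)]\cong K_{2,2,2}$, $u\in N(x)$, $C=\{3,4,5,6\}=N(x)\cap N(u)$ and $B=N(u)\setminus(C\cup\{x\})$. Since $d(u)=10$ we have $|B|=5$. I will argue as in Lemma \ref{thin-eight-nine-ten}: start from one of Packings A, B, C on $\{x,u\}$ and extend it with triangles at $u$ that use edges of $G[B]$ or $E(B,C)$. The target is $|X|\le 2|S|$ with every triangle at $x$ or $u$ covered. Packing A leaves $|X|=6$ against $|S|=4$, so it has slack $2$. Packings B and C have $|X|=9$ and $|S|\in\{4,5\}$, so their slack is $-1$ and $1$. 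Every triangle at $u$ not lying in $N[x]$ uses a vertex of $B$, so all edges $ub$ with $b\in B$ must be handled. Either we select $ub$, or we cover all triangles $ubq$ through the edges $bq$ with $q\in B\cup C$. The hypothesis $|N(c)\cap B|\le 1$ says that $E(B,C)$ is a matching-like set of edges, with each $c$ having at most one $B$-neighbour. This is what keeps that covering cheap.

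First I split by $\nu_m(G[B])$, as in Lemma \ref{thin-eight-nine-ten}.

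\textbf{Case $\nu_m(G[B])\ge2$.} Add two matching triangles at $u$ to Packing A and select the matching together with $E(\{u\},B)$. This gives $|S|=6$ and $|X|=6+2+5=13>12$, which is one edge too many. To recover it, I leave one edge $ub$ unselected, where $b$ is the vertex of $B$ missed by the matching when $\nu_m=2$. Its triangles $ubq$ go through $q\in N(b)\cap(B\cup C)$. If $\nu_m(G[B])=2$ and the unmatched vertex $b$ has a low neighbourhood, I cover those triangles by edges $bq$. The hypothesis bounds $|N(b)\cap C|$ only by $4$, though, so this may fail. In that case I plan to switch to Packing C or B and add triangles $ubq$, $q\in C$, using the relabelling freedom of B to leave a chosen $uq$ unused.

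\textbf{Case $\nu_m(G[B])\le1$.} Then $G[B]$ is a star plus isolates, a triangle plus isolates, or empty. With $|B|=5$ there are at least three vertices of $B$ whose only link neighbours are in $C\cup\{b'\}$. Connectivity of $G[N(u)]$ forces each isolated vertex $b$ of $G[B]$ to send an edge to $C$. Because the $C$-vertices have at most one $B$-neighbour each, these edges $bq$ form a matching between $B$ and $C$. I then add the triangles $ubq$ for several such $b$, using distinct $q$, and select each $bq$. This adds $2$ to $|S|$ for each pair of new triangles, in exchange for one or two cover edges each. The catch is that Packing B can leave only one $uq$ unused. So here I plan to start from Packing A, which uses all of $uq$, $q\in C$, and instead select $ub$ for some vertices while packing $ubq$ for others. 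That requires freeing the edges $uq$ from Packing A, which is where the alternative packings are needed.

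If no combination reduces $\{x,u\}$, the conclusion also allows a reduction removing $u$ alone. The fallback is to check that $G[N(u)]$ itself is WKE, and then apply Lemma \ref{wke-reduction-ten}. Here $N(u)$ consists of the $5$-vertex set $\{x,2\}\cup C$ minus $2$ (so $x$ and $C$), giving $\{x\}\cup C$, together with $B$. Its structure is a $K_{1,2,2}$-like piece with a pendant matching to $B$ plus a sparse $G[B]$. The intended witness is $Q=C$ with $M$ covering $G[B]$, plus either $x$ matched into $C$ or an edge into $B$. This works when $|E(G[B])|$ is small enough for $\nu_m$ to absorb it.

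I expect the main obstacle to be the configurations where $G[B]$ has matching number exactly $2$ and dense structure, for example $K_4$ or $K_5$-like pieces. There neither the cover count in the $\{x,u\}$ reduction nor the WKE witness fits directly, and I would have to enumerate the possible graphs $G[B]$ on $5$ vertices up to the constraint $|N(c)\cap B|\le1$.
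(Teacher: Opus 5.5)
Your proposal has a genuine gap: it leaves open the case that carries the lemma, namely $\nu_m(G[B])=2$ with every vertex missed by some maximum matching having a neighbour in $C$.

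Adding two triangles on a maximum matching $M$ to Packing A and dropping $ub$ for the unmatched $b$ gives $|S|=6$ and $|X|=6+2+4=12$, with no slack left. So this works only when $N(b)\cap C=\varnothing$. Covering the triangles $ubq$ with $q\in N(b)\cap C$ by $bq$ or $uq$ breaks the count.

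Your WKE fallback with $Q=C$ cannot take over. Since $x$ is isolated in $G[N(u)]-C$, all of $E(G[B])$ would have to lie in the matching, so it works only when $G[B]$ is itself a matching.

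The missing ingredient is a maximum matching $M$ of $G[B]$ that misses a vertex $b$ with a contact $bq$, together with $T\subseteq B$, $|T|\le2$, covering $G[B]-M$. The paper gets this from the Gallai--Edmonds decomposition. Let $D$ be the set of vertices missed by some maximum matching. If each vertex of $D$ has a contact, the hypothesis $|N(c)\cap B|\le1$ makes these contacts distinct, so $|D|\le4$. That leaves five structures, each checked directly:
\begin{itemize}
\item $A=\varnothing$ with $G[D]\in\{K_1,K_3\}$;
\item $|A|=1$ with $G[D]\in\{2K_1,K_3\cup K_1\}$;
\item $|A|=2$ with $G[D]=3K_1$.
\end{itemize}
The paper then shows the link at $u$ is WKE via $M\cup\{bc,xq,pr\}$ and $Q=(C\setminus\{q\})\cup T$. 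This additionally needs $T$ to contain the $B$-neighbour of $q$, and the paper supplements it with a Packing C construction for a $3$-vertex cover of $E(G[B])\cup E(B,C)$.

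Your suggestion of switching to Packing B would also finish once $T$ exists, and without the condition on $q$. Relabel so that $uq$ is unused, pack Packing B together with the two triangles on $M$ and $ubq$, and select $M$, $bq$ and $uT$. Packing B already selects all four edges from $u$ to $C$, so every contact triangle is hit, giving $|S|=7$ and $|X|\le9+3+2=14$.

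Your diagnosis of the obstacle is inverted. Dense graphs such as $K_5$ or $K_5-e$ are factor critical, so $|D|=5>4$ and the Packing A construction applies at once; the sparse structures are the real work.

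The case $\nu_m(G[B])\le1$ is also unfinished. Packing several triangles $ubq$ cannot work, because Packing A uses every edge $uq$ and Packing B frees only one. It is not needed either: since Packing B selects every $uq$, only the edges of $G[B]$ remain to be covered, and one extra triangle repairs its deficit of one. The paper simply reuses the constructions of Lemma~\ref{thin-eight-nine-ten}:
\begin{itemize}
\item if $G[B]$ is empty, Packing B with $ubq$;
\item if $G[B]$ has one edge, Packing B with the triangle on that edge;
\item for a star centred at $a$ where no other vertex of $B$ has a contact, Packing C with $ua$ added to the cover;
\item for a star centred at $a$ otherwise, Packing B with $uac,ubq$ and cover edges $ua,ac,bq$;
\item for a triangle $abc$ with an isolate $d$ and contact $dq$, Packing B with $uab,udq$ and cover edges $uc,ab,dq$.
\end{itemize}
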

\begin{proof}
We have $|B|=5$, $|E(B,C)|\le4$, and every component of $G[B]$ meets $C$.
\leavevmode\vspace{0.5em} \\
\noindent\textbf{Case 1: $\nu_m(G[B])\le1$.}\vspace{0.5em} \\*
Use the star constructions of Lemma \ref{thin-eight-nine-ten}.
If $G[B]$ contains a triangle, take either isolate $d$ and an edge $dq\in E(B,C)$ and use that lemma's triangle construction.
\leavevmode\vspace{0.5em} \\
\noindent\textbf{Case 2: $\nu_m(G[B])=2$.}\vspace{0.5em} \\*
Say a matching $M$ of size $2$ leaves $b$ unmatched and $N(b)\cap C=\varnothing$.
Add the $2$ triangles at $u$ on $M$ to Packing A and select $M\cup\bigl(E(\{u\},B)\setminus\{ub\}\bigr)$.
We get $|S|=6$, $|X|=12$.
\leavevmode\vspace{0.5em} \\
Let $J$ have vertex set $B\cup C$ and edge set $E(G[B])\cup E(B,C)$.
If $J$ has a cover $Z$ with $|Z|\le3$, add the $2$ triangles at $u$ on a matching in $G[B]$ to Packing C and pick the matching edges and $uZ$.
We get $|S|=7$, $|X|\le14$.
\leavevmode\vspace{0.5em} \\
So assume every vertex left unmatched by a maximum matching has a neighbor in $C$, and $J$ has no cover of size $3$.
Take a maximum matching $M$ leaving $b$ unmatched, an edge $bc\in E(B,C)$, a cycle neighbor $q$ of $c$, and the other cycle vertices $p,r$.
If $T\subseteq B$ has size $2$, covers $G[B]-M$ and contains the neighbor of $q$ in $B$ when one exists, then
\[
M'=M\cup\{bc,xq,pr\},\,Q=(C\setminus\{q\})\cup T
\]
gives a WKE witness of size $5$.
\leavevmode\vspace{0.5em} \\
Since $|N(q)\cap B|\le1$, every edge of the link outside $Q$ lies in $M'$.
\leavevmode\vspace{0.5em} \\
Apply the Gallai Edmonds theorem \cite{Kotlov} to $G[B]$.
Let $D$ be the vertices left unmatched by some maximum matching, $A=N_B(D)\setminus D$ and $R=B\setminus(D\cup A)$.
The components of $G[D]$ are factor critical, $G[R]$ has a perfect matching, and $G[D]$ has $|A|+1$ components.
\leavevmode\vspace{0.5em} \\
Every vertex of $D$ has a neighbor in $C$, so $|D|\le4$.
\leavevmode\vspace{0.5em} \\
\noindent\textbf{Subcase 2a: $A=\varnothing$, $D=\{b\}$.}\vspace{0.5em} \\*
The set $N(b)\cap C$ is nonempty and proper, so take a cycle edge $cq$ with $c\in N(b)$ and $q\notin N(b)$.
Take a perfect matching $M$ of $G[R]$ and let $T$ be the endpoints of its edge containing the neighbor of $q$ in $B$, or of either edge if $q$ has no such neighbor.
Then $T$ covers $G[B]-M$.
\leavevmode\vspace{0.5em} \\
\noindent\textbf{Subcase 2b: $A=\varnothing$, $G[D]=K_3$.}\vspace{0.5em} \\*
The set $R$ spans an edge, and the $4$ edges of $E(B,C)$ meet each vertex of $D$ once and $R$ once.
Leave a triangle vertex $b$ unmatched and let $M$ consist of the opposite triangle edge and the edge of $G[R]$.
Choose $q$ with no neighbor in $R$ and let $T=D\setminus\{b\}$.
\leavevmode\vspace{0.5em} \\
\noindent\textbf{Subcase 2c: $A=\{a\}$, $D=\{b,d\}$.}\vspace{0.5em} \\*
Write $R=\{f,g\}$, take a cycle edge $cq$ with $c\in N(b)$ and $q\notin N(b)$, and put $M=\{ad,fg\}$.
If $q$ has neighbor $z\in\{d,f,g\}$, take $T=\{a,z\}$.
Otherwise take $T=\{a,f\}$.
\leavevmode\vspace{0.5em} \\
\noindent\textbf{Subcase 2d: $A=\{a\}$, $G[D]\cong K_3\cup K_1$.}\vspace{0.5em} \\*
Write $d$ for the isolate.
Each vertex of $D$ has $1$ neighbor in $C$.
Each component of $G[D]$ contains a vertex left unmatched by some maximum matching, so $ad$ exists and $a$ has a neighbor in the triangle.
\leavevmode\vspace{0.5em} \\
If $ab\notin E(G)$ for a triangle vertex $b$, take $M$ as $ad$ and the opposite triangle edge, let $T$ be the other two triangle vertices, and choose $q\notin N(d)$.
Otherwise leave $d$ unmatched, choose either cycle neighbor $q$ of its neighbor in $C$, and write $t$ for the triangle vertex adjacent to $q$.
Take $M$ as $at$ and the opposite triangle edge and $T=\{a,t\}$.
\leavevmode\vspace{0.5em} \\
\noindent\textbf{Subcase 2e: $|A|=2$.}\vspace{0.5em} \\*
The graph $G[D]$ has $3$ isolated vertices.
If $N(D)\cap C\ne C$, take a cycle edge $cq$ with $c\in N(D)$ and $q\notin N(D)$, leave a vertex of $D$ adjacent to $c$ unmatched, and take $T=A$.
Assume now $N(D)\cap C=C$.
\leavevmode\vspace{0.5em} \\
Label one cycle part $c_1,c_2$ with distinct neighbors $d_1,d_2\in D$, and take $c_3d_3\in E(B,C)$ on the other part.
Write $c_4$ for its remaining vertex and $A=\{a_1,a_2\}$.
Since $J$ has no cover of size $3$, $a_1a_2\in E(G)$.
\[
M'=\{c_1d_1,c_2d_2,c_3d_3,a_1a_2,xc_4\},\,
Q=\{x,c_3,c_4,a_1,a_2\}
\]
is a WKE witness.
So the link at $u$ is WKE.
\end{proof}
\begin{lemma}\label{c4-low}
If the maximum codegree is $4$, then $\delta(G)\ge7$ and every link is connected.
\end{lemma}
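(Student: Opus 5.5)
We need to show: if max codegree is 4, then $\delta(G)\ge7$ and every link is connected. The plan is to first get connectivity of links from the component-size facts already established, and then rule out degrees $5$ and $6$ using Lemma \ref{lowstructure-ten} together with the $K_{2,2,2}$ packings.

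\textbf{Step 1: every link is connected.} By Lemma \ref{wke-reduction-ten} and the robustness discussion, every link component has at least $5$ vertices and is not WKE. Lemma \ref{five-component-ten} applies since every codegree is at most $4$, and it shows that no component has order exactly $5$. So every component has order at least $6$. Since $d(x)\le10$, a disconnected link would need two components of total order at most $10$, each of order at least $6$, which is impossible. Hence every link is connected.

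\textbf{Step 2: $\delta(G)\ge 6$.} We already know $\delta(G)\ge5$. If $d(x)=5$, the link is a single component of order $5$, contradicting Lemma \ref{five-component-ten}. So $\delta(G)\ge6$.

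\textbf{Step 3: no vertex of degree $6$.} Suppose $d(x)=6$. By Lemma \ref{lowstructure-ten}, $\overline{G[N(x)]}$ is a matching with a number of edges different from $2$.
\begin{itemize}
\item With $0$ or $1$ complement edges, some vertex $y\in N(x)$ is adjacent to the other $5$ neighbors. Then $xy$ has codegree $5>4$, which is a contradiction.
\item So the complement is a perfect matching, and $G[N(x)]\cong K_{2,2,2}$. Each edge $xu$ then has codegree exactly $4$, with common neighborhood the $4$-cycle $C$. This is the labelled configuration $x=0,u=1,w=2$ preceding Lemma \ref{thin-eight-nine-ten}.
\end{itemize}

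It remains to dispose of the $K_{2,2,2}$ case. Pick $u\in N(x)$. By Lemma \ref{lowstructure-ten}, the set of vertices of degree at most $6$ is independent, and no neighbor of $x$ has degree $7$. So $d(u)\in\{8,9,10\}$, and by Step 1 the link at $u$ is connected.
\begin{itemize}
\item If $d(u)\in\{8,9\}$, Lemma \ref{thin-eight-nine-ten} makes $\{x,u\}$ reducible, which is a contradiction.
\item If $d(u)=10$ and some $u\in N(x)$ has $|N(c)\cap B|\le1$ for every $c\in C$, then Lemma \ref{thin-ten-ten} gives a reduction, again a contradiction.
\end{itemize}

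\textbf{Main obstacle.} The remaining case is that every $u\in N(x)$ has degree $10$ and some $c\in C$ has two neighbors in $B=N(u)\setminus(C\cup\{x\})$. Here I would use the codegree bound. Take $c\in C$ with $b_1,b_2\in N(c)\cap B$. The edge $uc$ has common neighbors $x$, the two cycle neighbors of $c$ in $C$, and $b_1,b_2$. That makes $5$ common neighbors, contradicting maximum codegree $4$.

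So the cycle structure alone forces $|N(c)\cap B|\le1$, and Lemma \ref{thin-ten-ten} always applies. The step needing care is verifying that the two cycle neighbors of $c$ in $C$ are genuinely distinct from $x$ and from $B$, and that they are adjacent to $u$. This holds since $C\subseteq N(u)\cap N(x)$ and $B$ is disjoint from $C\cup\{x\}$. Hence no degree-$6$ vertex exists, and $\delta(G)\ge7$.
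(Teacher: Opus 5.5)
Your proposal is correct and follows essentially the same route as the paper. You use Lemma \ref{five-component-ten} with robustness to get connected links and exclude degree $5$, and the codegree bound with Lemma \ref{lowstructure-ten} to force a $K_{2,2,2}$ link at a degree-$6$ vertex with neighbors of degree at least $8$. Lemma \ref{thin-eight-nine-ten} handles degrees $8,9$, and for degree $10$ the count $3+|N(c)\cap B|\le4$ on the codegree of $uc$ lets Lemma \ref{thin-ten-ten} apply. Your ``main obstacle'' is therefore not a separate case, just this direct count.
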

\begin{proof}
Lemma \ref{five-component-ten} excludes link components of order $5$, so robustness excludes vertices of degree $5$ and disconnected links.
By Lemma \ref{lowstructure-ten}, every link of order $6$ is $K_{2,2,2}$ under the codegree bound.
\leavevmode\vspace{0.5em} \\
Its neighbors have degree at least $8$ by Lemma \ref{lowstructure-ten}, and Lemma \ref{thin-eight-nine-ten} excludes degrees $8,9$.
For degree $10$, each $c\in C$ has the $3$ link neighbors $x$ and its two cycle neighbors, so $|N(c)\cap B|\le1$ and Lemma \ref{thin-ten-ten} gives a reduction.
\end{proof}
\begin{lemma}\label{c5-six}
If the maximum codegree is $5$, then no edge of codegree $5$ has an endpoint of degree $6$.
\end{lemma}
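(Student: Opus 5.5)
Suppose $d(x)=6$ and $xu$ has codegree $5$. Then $u$ is adjacent to all of $N(x)\setminus\{u\}$, so $u$ is a vertex of degree $5$ in the link $L=G[N(x)]$. Lemma \ref{lowstructure-ten} gives $\Delta(\overline L)\le1$ and $|E(\overline L)|\ne2$. Since $\overline L$ is a matching missing $u$ on $6$ vertices, it has $0$, $1$ or $2$ edges, and $2$ is excluded. So $L\cong K_6$ or $L\cong K_6-e$, with $u$ not an endpoint of the missing edge. The plan is to rule out both cases by the codegree bound, and otherwise to reduce in the style of Lemma \ref{five-component-ten}.

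\textbf{Case 1: $L\cong K_6$.} Then $N[x]$ spans $K_7$, and every edge inside it has codegree at least $5$. Since the maximum codegree is $5$, no edge of this $K_7$ lies in a triangle with a vertex outside it. Label the vertices $1,\ldots,7$ and delete $E(K_7)$, an edge reduction with $F=E(K_7)$. For the cover, take $X$ to be the edges inside the colour classes of a $2$-colouring, which makes the rest bipartite and triangle free. With classes of sizes $3$ and $4$ this gives $|X|=3+6=9$. A packing of $5$ edge disjoint triangles in $K_7$ exists (indeed $7$, the Fano plane), so $|X|\le2|S|$. We also need $E(S)\setminus F\subseteq X$, and this is automatic since all packed edges lie in $F$. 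So $G$ has an edge reduction, a contradiction.

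\textbf{Case 2: $L\cong K_6-ab$.} Here $N[x]$ spans $K_7-ab$. Every edge other than those at $a$ and $b$ toward the rest has codegree $5$ inside this graph, so it lies in no triangle escaping the $K_7-ab$. The edges $xa$ and $xb$ also lie in no escaping triangle, because all triangles at $x$ live in the link. The remaining edges $ay$ and $by$, for $y$ in the $K_5$ part $\{x\}\cup(N(x)\setminus\{a,b\})$, have codegree $4$ inside and may have one outside common neighbour. Mimicking Case 2 of Lemma \ref{five-component-ten}, I would place all edges from $\{a,b\}$ to $\{u,c,d,e\}$ in $X$; these are $8$ edges, with $x$ excluded. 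Then the $K_5$ on $\{x,u,c,d,e\}$ together with the edges $xa,xb$ must be covered. I would take $X$ to consist of these $8$ edges plus a $2$-colour cover of the $K_5$ avoiding triangles ($4$ edges for classes of sizes $2$ and $3$), with $x$ placed in the class of size $3$. Since $xa$ and $xb$ only complete triangles through $\{u,c,d,e\}$, whose edges to $a,b$ are already in $X$, they are handled. This gives $|X|=12$, so I need $6$ edge disjoint triangles in $K_7-ab$ whose edges lie in $F$. Taking $F$ to be all edges of $K_7-ab$, the condition $E(S)\setminus F\subseteq X$ is trivial. Since $K_7-ab$ has $20$ edges, $6$ disjoint triangles is plausible: take $a$ and $b$ each in $3$ triangles using pairs from the $K_5$ ($xu,cd,\ldots$), plus extra triangles inside.

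The main obstacle is Case 2: finding an explicit packing of $6$ triangles in $K_7-ab$. Alternatively one can shrink $X$ (e.g.\ to $10$ with $5$ triangles) while respecting that each edge $ay$ and $by$ leaving $F$ must be covered or lie in $X$. I would first try $a$ with $xu, cd$; $b$ with $xc, ud$; $e$ with $ua, \ldots$, and adjust by computer if necessary.
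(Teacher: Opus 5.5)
Your argument is correct and takes a different, shorter route than the paper. The one step you leave open is not actually an obstacle.

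\textbf{The missing packing.} In Case 2 the Fano decomposition of $K_7$ has exactly one line through $a,b$, and the other six lines are six edge-disjoint triangles of $K_7-ab$. Your own start also completes: $axu,acd,bxc,bud,xde,uce$ are pairwise edge-disjoint and leave only $ae,be$ unused. A triangle on $ua$, as you tentatively suggested, would clash with $axu$. With $|S|=6$ and your $|X|=8+4=12$, the edge reduction with $F=E(G[N[x]])$ goes through. Your check of the cover is right: triangles inside the $K_5$ are killed by the bipartition, and every other triangle on an $F$-edge (inside $N[x]$ or escaping it) uses one of the eight edges from $\{a,b\}$ to $\{u,c,d,e\}$. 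Placing $x$ in the class of size $3$ is unnecessary, as is the fallback of shrinking $X$ to $10$.

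\textbf{Comparison with the paper.} The paper reduces the pair $\{x,u\}$, so it must cover every triangle at $u$, including those through $B=N(u)\setminus N[x]$.
\begin{itemize}
\item In the $K_5$ case it first uses the codegree bound and robustness to force $B=\varnothing$. It then packs the Fano plane against $E(G[C])\cup\{xu\}$.
\item In the $K_5-23$ case it takes the Fano plane minus the line $123$ with $X_0=E(G[C])\cup\{xu\}$. It then needs casework on $G[B]$ (a $2$-matching, stars, triangles, contacts at $2,3$, padding by isolates) to absorb the triangles at $u$ through $B$.
\end{itemize}
Your edge reduction only has to meet triangles using edges of $G[N[x]]$. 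Codegree saturation of the $K_5$ edges, together with $N(x)\subseteq N[x]$, confines the escaping ones to the eight cross edges. So $u$'s other neighbors never enter, and your argument is just Case 2 of Lemma \ref{five-component-ten} lifted by one vertex.

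What each route buys: yours is casework-free and needs robustness only through Lemma \ref{lowstructure-ten}. The paper's stays in the pair-reduction format used in its later sections and verification, but gains nothing essential for this lemma.
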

\begin{proof}
Say $x=0$ has degree $6$ and $xu$ has codegree $5$, with $u=1$.
Put $C=\{2,3,4,5,6\}$ and $B=N(u)\setminus\bigl(C\cup\{x\}\bigr)$.
By Lemma \ref{lowstructure-ten}, $G[C]$ is $K_5$ or $K_5-23$, and $|B|\le4$.
\leavevmode\vspace{0.5em} \\
\noindent\textbf{Case 1: $G[C]=K_5$.}\vspace{0.5em} \\*
The codegree bound gives $E(B,C)=\varnothing$, and robustness gives $B=\varnothing$.
Take the below packing of size $7$ and $X=E(G[C])\cup\{xu\}$, with $|X|=11$.
\begin{center}
\begin{minipage}{\textwidth}
\centering\setlength{\tabcolsep}{3pt}
\begin{tabular}{@{}ccccccc@{}}
\begin{tabular}{@{}l@{}}
\PackingRow{0}{2/4,3/5,1/6}\\[0.18em]
\PackingRow{3}{4/6}\\[0.18em]
\PackingRow{1}{4/5,2/3}\\[0.18em]
\PackingRow{2}{5/6}\\[0.18em]
\end{tabular}
\end{tabular}

\end{minipage}
\end{center}
\leavevmode\vspace{0.5em} \\
\noindent\textbf{Case 2: $G[C]=K_5-23$.}\vspace{0.5em} \\*
Get rid of the triangle $123$ and take $X_0=E(G[C])\cup\{xu\}$, with $|X_0|=10$.
Only $2,3$ can have neighbors in $B$, at most $1$ each, and $u2,u3$ aren't used.
A matching $M$ of size $2$ in $G[B]$ gives $2$ more triangles at $u$, and selecting $M\cup E(\{u\},B)$ gives $|S|=8$ and $|X|\le16$.
\leavevmode\vspace{0.5em} \\
If $E(G[B])=\varnothing$, add $u2,u3$ to $X_0$.
Otherwise assume $\nu_m(G[B])=1$, so its edges form a star or a triangle.
Add isolated vertices with no neighbors in $C$ until $|B|=4$.
\leavevmode\vspace{0.5em} \\
For a star, take a leaf or isolate $b_0$ with $N(b_0)\cap C=\varnothing$ and select $E(\{u\},B)\setminus\{ub_0\}$.
Add the triangle at $u$ on the edge of $G[B]$ incident with $b_0$, or on any edge if $b_0$ is isolated, and pick that edge.
We get $|S|=7$, $|X|\le14$.
\leavevmode\vspace{0.5em} \\
For a triangle and isolate $d$ with $N(d)\cap C=\varnothing$, select $E(\{u\},B)\setminus\{ud\}$ and add the triangle at $u$ on any edge of $G[B]$, picking that edge.
\leavevmode\vspace{0.5em} \\
If $dq\in E(B,C)$, add a triangle at $u$ on an edge of $G[B]$ and the triangle $udq$, then select both new link edges and $E(\{u\},B)$.
We get $|S|=8$, $|X|=16$.
No packed edge uses an added isolate, and removing its edge at $u$ from the cover gives a reduction in $G$.
\end{proof}
\subsection{Maximum codegree}\label{choose-ten}
Choose $uv\in E(G)$ of maximum codegree $c$ with $d(u)\le d(v)$ and put $H=G[C]$.
We have $4\le c\le9$, $|C|=c$, $|A|=d(u)-c-1$ and $|B|=d(v)-c-1$.
Call the edges of $E(A,C)\cup E(B,C)$ contacts.
\leavevmode\vspace{0.5em} \\
Every triangle at $u$ or $v$ uses $uv$, an edge of $H$, a contact or a private edge.
When a case needs maximal $|E(H)|$, choose $uv$ to maximize $|E(H)|$ among edges of codegree $c$.
For $c=4$ and the selected star cases, break ties by minimizing
\[
\bigl(|E(G[A])|+|E(G[B])|,\,\min\{|E(G[A])|,|E(G[B])|\}\bigr)
\]
lexicographically.
\begin{lemma}\label{endpoint-degrees-ten}
The links at $u,v$ are connected, and
\[
d(u),d(v)\in\begin{cases}
\{7,\ldots,10\}&4\le c\le6,\\
\{c+1,\ldots,10\}&7\le c\le9.
\end{cases}
\]
\end{lemma}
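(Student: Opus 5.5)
The plan is to split the statement into a connectivity part and the two degree ranges, and to use only the earlier lemmas. After Lemma \ref{petersen-pair-ten}, the maximum codegree satisfies $c\ge4$. Since $\Delta(G)\le10$ and $u,v$ have the $c$ common neighbours plus each other, $c\le9$. The upper bound $d(u),d(v)\le10$ is trivial. For $7\le c\le9$, the lower bound $d(u),d(v)\ge c+1$ also holds trivially, because $N(u)\supseteq C\cup\{v\}$. So the real content is connectivity of the two links for every $c$, together with $d(u),d(v)\ge7$ when $4\le c\le6$.

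\textbf{Connectivity.} We know that every link component has order at least $5$ and is not WKE. In the link at $u$, the vertex $v$ has $c\ge4$ neighbours, so the component containing $v$ has order at least $c+1\ge5$. A disconnected link would need a second component of order at least $5$. That forces $d(u)\ge c+1+5\ge10$, so equality $c=4$, $d(u)=10$ holds and there are exactly two components of order $5$. Lemma \ref{five-component-ten} forbids components of order $5$ once the maximum codegree is $4$. This settles connectivity, and the same count works at $v$. For $c=4$, Lemma \ref{c4-low} gives connectivity and $\delta(G)\ge7$ directly, so the case $c=4$ is finished.

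\textbf{Degree lower bound for $c\in\{5,6\}$.} We must exclude $d(u)\in\{5,6\}$; in each case $d(u)\ge c+1$, so only $c+1\le d(u)\le6$ remains. If $c=5$ and $d(u)=6$, this is exactly Lemma \ref{c5-six}, applied with $x=u$ and the edge $uv$ of codegree $5$. If $c=5$ and $d(u)$ were smaller, there would be no room, since $d(u)\ge6$. If $c=6$, then $d(u)\ge7$ automatically. The same reasoning applies to $v$: as $d(u)\le d(v)$, the only case to check is $d(v)=6$ with $c=5$, and Lemma \ref{c5-six} handles it symmetrically.

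\textbf{Main obstacle.} I expect the main obstacle to be making sure that Lemma \ref{c5-six} really applies whenever the maximum codegree is $5$, and that $\delta\ge5$ is used correctly. I would also double-check the boundary count $c+1+5\le10$ in the connectivity argument. For $c\ge5$ that count already forces connectedness, since two components would need $d(u)\ge11$.
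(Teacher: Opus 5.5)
Your proof is correct and follows essentially the same route as the paper. It uses Lemma \ref{c4-low} for $c=4$, Lemma \ref{c5-six} for $c=5$ and the trivial bound $d\ge c+1$ for $c\ge6$, and it gets connectivity from the count that a second link component of order at least $5$, beside the component containing $\{v\}\cup C$ of order at least $c+1$, would exceed $d(u)\le10$. Your extra handling of $c=4$ through Lemma \ref{five-component-ten} in the counting step is redundant given Lemma \ref{c4-low}, but harmless.
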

\begin{proof}
For $c=4$, Lemma \ref{c4-low} gives $\delta(G)\ge7$ and connected links.
For $c=5$, Lemma \ref{c5-six} excludes an endpoint of degree $6$.
For $c\ge6$, $d(u),d(v)\ge c+1$.
\leavevmode\vspace{0.5em} \\
When $c\ge5$, the vertices $\{v\}\cup C$ lie in a component of the link at $u$ of order at least $6$.
A second component has order at least $5$, contradicting $d(u)\le10$.
The same argument applies at $v$.
\end{proof}
For $c\ge5$, the degree bound concerns only $u,v$, and other vertices may have degree $5$ or $6$.
The local graph has
\[
|N[u]\cup N[v]|=d(u)+d(v)-c\le20-c\le16.
\]
The unordered endpoint degree pairs in Lemma \ref{endpoint-degrees-ten} number
\[
3\binom{5}{2}+\binom{4}{2}+\binom{3}{2}+1=40.
\]
\section{Edge neighborhoods}\label{c6-human-ten}
A matching in $H$ gives triangles at either endpoint.
Two edge disjoint matchings let us pack at both endpoints but we also need to cover triangles on $uv$ and contacts to the private sets.
\begin{lemma}\label{one-hub-ten}
Let $L$ be connected with $V(L)=\{h\}\dot\cup C\dot\cup B$, $E(L[C])=\varnothing$, $|C|\ge5$, $|B|\le3$ and $N_L(h)=C$.
Then $L$ is not WKE if and only if $|B|=3$, $L[B]\in\{P_3,K_3\}$ and all three vertices of $B$ have the same two neighbors in $C$.
\end{lemma}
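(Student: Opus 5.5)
The plan is to characterize WKE witnesses for $L$ directly and handle the remaining configurations by a short case analysis on $B$. Throughout I use that every edge of $L$ either is a spoke $hc$ with $c\in C$ or meets $B$, since $E(L[C])=\varnothing$ and $N_L(h)=C$.

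\emph{Step 1: $h\in Q$ in any witness.} If $h\notin Q$, every spoke $hc$ with $c\notin Q$ lies in $M$, and at most one spoke fits in a matching. So $|Q|\ge|C|-1\ge4$. Every non-spoke edge of $M$ meets $B$, so
\[
|M|\le1+|B|\le4 .
\]
I expect this to force either a contradiction or a witness that can be traded for one containing $h$; I will check the boundary case $|M|=4$ directly.

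\emph{Step 2: sufficient condition for WKE.} Spokes are covered once $h\in Q$, and a spoke $hc$ with spare $c$ can be added to $M$ for free, since $|C|\ge5$ leaves spare vertices of $C$. So it suffices to find $T\subseteq B$ with two properties.
\begin{itemize}
\item The edges of $L-h-T$ form a matching $N$.
\item The vertices of $T$ can be matched, by edges of $E(B,C)\cup E(L[B])$, to distinct vertices outside $V(N)\cup T$.
\end{itemize}
Then $Q=\{h\}\cup T$ and $M=N\cup M_T\cup\{hc\}$ satisfy $|M|\ge|T|+1=|Q|$, where $M_T$ is the matching saturating $T$ and $c\in C$ is an unused vertex. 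The natural first choice is $T=B$, which only needs a matching saturating $B$. If that fails, shrink $T$ so that $L-h-T$ becomes a matching.

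\emph{Step 3: case analysis for the forward direction.} Every vertex of $B$ has a neighbor, since $L$ is connected.
\begin{itemize}
\item For $|B|\le2$, a matching saturating $B$ always exists, taking $T=B$ or a single vertex.
\item For $|B|=3$ with $|N_C(B)|\ge3$, Hall's condition should again give a saturating matching. The exception is a pair of $B$ vertices that share a single $C$-neighbor; there I take $T$ to be the other vertices, so that the leftover edges form a matching.
\item The genuinely hard situation is $|N_C(B)|\le2$, and in particular all of $B$ attached to the same pair $c_1,c_2$.
\end{itemize}
I expect this last case to be the main obstacle. It needs a careful enumeration over the graph $L[B]$: empty, one edge, $P_3$ or $K_3$. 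I also need to split on whether each vertex of $B$ sees both $c_1,c_2$ or only one of them. In the subcases other than ``$L[B]\in\{P_3,K_3\}$ with all three vertices adjacent to both $c_1,c_2$'', I will exhibit $T$ of size $1$ or $2$ explicitly. For example, if some $b$ misses $c_2$, it can be matched to $c_1$ while another vertex of $B$ takes $c_2$.

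\emph{Step 4: the exceptional graphs are not WKE.} Put $W=B\cup\{c_1,c_2\}$. Every non-spoke edge lies in $L[W]$, and $|W|=5$, so $M$ has at most $2$ non-spoke edges and $|M|\le3$. By Step 1, $h\notin Q$ would force $|Q|\ge4$, so $h\in Q$ and $|Q\setminus\{h\}|\le2$. It remains to check that deleting any $2$ vertices of $L[W]$ leaves a non-matching. Here $L[W]$ is $K_{2,3}$ plus a $P_3$ or $K_3$ on $B$. Any $3$ remaining vertices of $W$ span at least two edges sharing a vertex: two vertices of $B$ with one $c_i$, or one $b$ with $c_1,c_2$, or $B$ itself. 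This gives the contradiction.
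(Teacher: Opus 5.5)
Your Step 4 is correct and is essentially the paper's argument for the exceptional graphs. The gap is in the other direction. Step 2 only looks for witnesses of the form $Q=\{h\}\cup T$ with $T\subseteq B$, and Step 3 promises such a $T$ in every non-exceptional case. In your own ``hard case'' $|N_C(B)|\le2$, several non-exceptional graphs have no such $T$.

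\emph{First example.} Let $B$ be independent, with every vertex of $B$ adjacent only to $c_1$. Then $\nu_m(L)=2$, so $|Q|\le2$. If $h\notin Q$ then $|Q|\ge|C|-1\ge4$, which is impossible. Any $Q=\{h,y\}$ with $y\ne c_1$ leaves at least two edges at $c_1$. So the only witnesses are $Q=\{h,c_1\}$ with a contact and a spoke.

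\emph{Second example.} Let $B$ be independent or span a single edge, and be completely joined to $\{c_1,c_2\}$. Then $\nu_m(L)=3$. Every $T\subseteq B$ with $L-h-T$ a matching must equal $B$, since a surviving vertex of $B$ keeps both of its contacts. That gives $|Q|=4>|M|$. The actual witness is $Q=\{h,c_1,c_2\}$, with $M$ consisting of two contacts (or the private edge and a contact) plus a spoke.

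The missing idea is to let $Q$ contain vertices of $C$, namely a minimum vertex cover of the contact graph $E(B,C)$. The paper organizes the converse this way:
\begin{itemize}
\item it splits on the contact matching number $k$;
\item it puts $h$ together with a contact cover of size $k$ into $Q$ (the star center when $k=1$, a K\"onig cover when $k=2$), and this cover may lie in $C$;
\item it takes a maximum contact matching plus a spoke $hr$ to a spare leaf as $M$;
\item it then repairs the leftover private edges case by case.
\end{itemize}
Your Hall-type search inside $B$ can only cover the configurations where such a cover can be chosen inside $B$.

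Two smaller points.
\begin{itemize}
\item Your Step 2 condition is stronger than needed. It requires $M_T$ to saturate $T$, whereas $|N|+|M_T|\ge|T|$ suffices and edges inside $T$ may also be used. For $|B|=2$ with both vertices adjacent only to $c_1$, your stated condition fails for every $T$, so the first bullet of Step 3 is false as written. Yet $Q=\{h,b_1\}$, $M=\{b_2c_1,hc\}$ is a witness.
\item The headline of Step 1 is false in general. Take $|C|=5$, $B$ independent, and $N(b_i)\cap C=\{c_i\}$ for distinct $c_1,c_2,c_3$. Then $Q=C\setminus\{c_0\}$ for a contact-free $c_0$, with $M=\{b_1c_1,b_2c_2,b_3c_3,hc_0\}$, is a witness avoiding $h$. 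This is harmless, because Step 4 only uses the valid inequality $|Q|\ge|C|-1$ when $h\notin Q$.
\end{itemize}
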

\begin{proof}
Let $k$ be the matching number between $B,C$.
Each contact matching leaves a vertex $r\in C$ for the edge $hr$.
\leavevmode\vspace{0.5em} \\
\noindent\textbf{Case 1: $k=0$ or $k=3$.}\vspace{0.5em} \\*
For $k=3$, take three contact edges and $hr$ with $Q=\{h\}\cup B$.
For $k=0$, connectivity gives $B=\varnothing$, so use $hr$ and $Q=\{h\}$.
\leavevmode\vspace{0.5em} \\
\noindent\textbf{Case 2: $k=1$.}\vspace{0.5em} \\*
The contacts form a star.
If its center is $b\in B$, take $Q=\{h,b\}$ and match $hr$ with the edge on $B\setminus\{b\}$ when present, or with a contact at $b$.
If its center is $c\in C$ and $|E(L[B])|\le1$, use $Q=\{h,c\}$ and $hr$ with that private edge or any contact.
\leavevmode\vspace{0.5em} \\
For a path $abd$ with $ac$ present, take $M=\{ac,bd,hr\}$ and $Q=\{h,c,b\}$.
If only $b$ contacts $C$, use the preceding star witness.
For a triangle $abd$, take $M=\{bc,ad,hr\}$ and the same $Q$.
\leavevmode\vspace{0.5em} \\
\noindent\textbf{Case 3: $k=2$.}\vspace{0.5em} \\*
K\"onig's theorem \cite{LovaszPlummer} gives a contact cover of size two.
Use $B$ when $|B|=2$.
A cover contained in $B$, together with $h$, works with a maximum contact matching and $hr$.
\leavevmode\vspace{0.5em} \\
For a cover $\{b,c\}$ with $b\in B,c\in C$, write $B=\{b,d,e\}$ and take $Q=\{h,b,c\}$.
If $de$ is absent, use the contact matching and $hr$.
Otherwise $k=2$ gives $bq$ with $q\ne c$, and $M=\{de,bq,hr\}$ works.
\leavevmode\vspace{0.5em} \\
For a cover $\{c,d\}\subset C$, use $Q=\{h,c,d\}$.
If $E(L[B])=\varnothing$, use two contact edges and $hr$.
If $E(L[B])=\{ab\}$, the third vertex $e$ has a contact, so take $M=\{ab,ec,hr\}$ after exchanging $c,d$.
\leavevmode\vspace{0.5em} \\
For a path $abe$ with $ad$ absent, take $Q=\{h,c,b\}$ and $M=\{ed,ba,hr\}$ when $ed$ exists, or a maximum contact matching and $hr$ otherwise.
If $a,e$ have all four contacts but $bd$ is absent, take $Q=\{h,a,e\}$ and $M=\{bc,ad,hr\}$ when $bc$ exists, or $M=\{ba,ec,hr\}$ otherwise.
\leavevmode\vspace{0.5em} \\
For a triangle $abe$ with $ad$ absent, take $Q=\{h,b,e\}$ and $M=\{ac,be,hr\}$ when $ac$ exists.
If $a$ has no contact, relabel $b,e$ and take $M=\{ab,ec,hr\}$.
\leavevmode\vspace{0.5em} \\
The only graphs left have $K_{3,2}$ between $B,C$ and $L[B]\in\{P_3,K_3\}$.
Here $\nu_m(L)=3$, and any WKE set contains $h$ since $|C|\ge5$.
Deleting at most two more vertices leaves two incident edges on $B$ and its two contacted vertices in $C$, so these graphs are not WKE.
\end{proof}
\begin{lemma}\label{two-hub-ten}
In Lemma \ref{one-hub-ten} drop the connected condition, let $|C|=5$ and add $k$ with $hk\in E$, $|N(k)\cap C|\in\{4,5\}$ and $N(k)\cap B=\varnothing$ when $|N(k)\cap C|=5$.
The resulting connected graph is WKE unless $L[B]\in\{P_3,K_3\}$ and all three vertices of $B$ have the same two neighbors in $C$.
\end{lemma}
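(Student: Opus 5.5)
We reduce Lemma \ref{two-hub-ten} to Lemma \ref{one-hub-ten} by treating $k$ as a second hub that only costs one vertex of $Q$. The graph $L-k$ has the shape of Lemma \ref{one-hub-ten}, except that it may be disconnected. Every vertex of $C$ is adjacent to $h$, so $L-k$ has one component containing $h\cup C$ together with the vertices of $B$ joined to it. The remaining components lie inside $B$ and, since $L$ is connected, they meet $N(k)$.

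\textbf{Step 1: $|N(k)\cap C|=5$.} Here $N(k)\cap B=\varnothing$, so $B$ has no neighbor of $k$. Every vertex of $B$ therefore reaches $h$ through $C$, and $L-k$ is connected. If $L-k$ is WKE, we take a witness $(M,Q)$ for it and put $Q'=Q\cup\{k\}$ and $M'=M\cup\{ke\}$.
\begin{itemize}
\item We need a vertex $e$ with $ke\in E(L)$ and $e$ not covered by $M$.
\item Every witness in the proof of Lemma \ref{one-hub-ten} uses $hr$ with $r\in C$ and at most three contact edges. So $M$ covers at most $4$ vertices of $C$, and since $|C|=5$ some $e\in C$ is free.
\item Every edge at $k$ is then covered by $Q'$, and $|Q'|\le|M'|$.
\end{itemize}
If instead $L-k$ is one of the exceptional graphs, the conclusion allows $L$ to be non-WKE, so there is nothing to prove. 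The converse direction is not claimed.

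\textbf{Step 2: $|N(k)\cap C|=4$.} Write $C\setminus N(k)=\{c_0\}$, and let $B_k=N(k)\cap B$.

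If $L-k$ is connected and WKE, we reuse the same augmentation. For $k$ to have a free neighbor, the witness $M$ must leave some vertex of $N(k)\cap C$ uncovered. There are two sources of flexibility:
\begin{itemize}
\item Choose the hub edge $hr$ with $r=c_0$ when $c_0$ is not already used.
\item Otherwise, $M$ uses at most three contacts, so together with $hr$ it covers at most $4$ vertices of $C$. If one of those is $c_0$, a vertex of $N(k)\cap C$ is still free.
\end{itemize}
The main case to check is when $M$ covers exactly the four vertices of $N(k)\cap C$. Then we swap $hr$ to $hc_0$, which works whenever $c_0$ is not a contact endpoint in $M$.

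If $L-k$ is disconnected, the components avoiding $h$ are subgraphs of $B$ on at most $3$ vertices, each adjacent to $k$. We build a witness directly.
\begin{itemize}
\item Put $k$ into $Q$, together with a WKE witness of the main component. Lemma \ref{one-hub-ten} applies to that component because its part of $B$ has at most $2$ vertices, so it is not exceptional.
\item For each component of $L[B]$ off $h$, of order at most $3$, take its matching edge. This costs at most one vertex of $Q$ per matching edge, since such small graphs are WKE by Lemma \ref{wke-reduction-ten}.
\item Add one edge $ke$ to balance the vertex $k$.
\end{itemize}
The vertex $e$ must again be a free vertex of $N(k)\cap C$ or of $B_k$. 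We find it by the same counting: the edges used on $C$ number at most $|B\cap \text{main}|+1\le3<4$.

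\textbf{Main obstacle.} The hardest case is $L-k$ being exceptional while $L$ is not, specifically $L[B]\in\{P_3,K_3\}$ with contacts $K_{3,2}$ but $B_k\ne\varnothing$. The statement still lets us declare $L$ non-WKE in that case, so the only genuinely delicate part is the tight count in Step 2. There we must show that some witness from Lemma \ref{one-hub-ten} avoids covering all of $N(k)\cap C$.

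We will go through the cases of that proof, noting which vertices of $C$ each witness uses. Only the $k=3$ contact case together with $hr$ fills four vertices of $C$. There we choose $r=c_0$, or re-pick a contact matching that meets $c_0$; both options are available because $|C|=5$ leaves slack.
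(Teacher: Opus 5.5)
Your proof is correct and follows the paper's argument. You split on whether $L-k$ is connected, extend a witness from Lemma \ref{one-hub-ten} (or from the components of $L-k$) by adding $k$ to $Q$ and matching $k$ to a free leaf, redirect $hr$ to the leaf missing $k$ in the three-contact case, and use that the main component carries at most two vertices of $B$ when $L-k$ is disconnected. Your separate treatment of $|N(k)\cap C|=5$ is only cosmetic. The redirection of $hr$ is valid because every witness in the proof of Lemma \ref{one-hub-ten} has $h\in Q$, a fact you should state explicitly.
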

\begin{proof}
If $L-k$ is connected, add $k$ to $Q$ and match $k$ to an unused adjacent leaf.
Only a witness with three contacts uses four leaves, and its edge at $h$ can use the leaf missing $k$ when free.
Every other witness leaves at least two leaves not used.
\leavevmode\vspace{0.5em} \\
If $L-k$ is disconnected, the component of $h$ has at most two vertices of $B$, so its witness uses at most three leaves.
The other components lie in $B$ and are WKE.
Combine their witnesses, add $k$ to $Q$ and match $k$ to an unused adjacent leaf.
\end{proof}
A picked edge of codegree $6$ can't have an independent common neighborhood.
Lemma \ref{one-hub-ten} forces both private groups to have three vertices with each group sharing two neighbors in $C$.
So some vertex of $C$ misses both groups and has a component of order two in its link which contradicts robustness.
\begin{lemma}\label{three-hub-five-ten}
Let $u,v,w$ form a triangle and join the same $5$ independent vertices $C$, with three further neighbors at each of $u,v,w$ forming disjoint groups, each containing an edge and having complete contacts to a pair of vertices in $C$.
Then $\{u,v,w\}$ is reducible.
\end{lemma}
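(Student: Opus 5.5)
The plan is to build the reduction by hand, in the style of Lemma \ref{petersen-pair-ten}, and check the three conditions of Definition \ref{reducing-ten} directly. Write $A_x$ for the group of $x\in\{u,v,w\}$ and $P_x\subseteq C$ for the pair that is completely joined to $A_x$. First I record which triangles meet $U=\{u,v,w\}$. Each of $u,v,w$ has degree $2+5+3=10$, with $N(u)=\{v,w\}\cup C\cup A_u$, and similarly for $v,w$. A triangle meeting $U$ either contains two vertices of $U$, and then it contains one of $uv,vw,wu$. Otherwise it contains exactly one vertex $x\in U$ and two vertices of $C\cup A_x$. Since $C$ is independent, one of these two is some $a\in A_x$, so the triangle contains the edge $xa$.

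This suggests the cover
\[
X=\{uv,vw,wu\}\cup E\bigl(U,A_u\cup A_v\cup A_w\bigr)\cup Y,
\]
where $Y$ is the set of packed edges lying outside $U$ (all of them have an endpoint in a group). The first two parts already meet every triangle meeting $U$, by the observation above. That leaves the packing, which must have $|X|\le2|S|$ and $E(S)\setminus E(U,V(G))\subseteq X$.

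I would use nine triangles.
\begin{itemize}
\item Three triangles $uvc_{uv}$, $vwc_{vw}$, $wuc_{wu}$ with distinct $c_{uv},c_{vw},c_{wu}\in C$. These use the three edges inside $U$ and edges from $U$ to $C$, so they contribute nothing to $Y$.
\item For each $x\in U$, a triangle $xaa'$ on an edge $aa'$ of $G[A_x]$, which exists by hypothesis.
\item For each $x\in U$, a triangle $xa''p_x$, where $a''$ is the third vertex of $A_x$ and $p_x\in P_x$. The edge $a''p_x$ exists because the contacts from $A_x$ to $P_x$ are complete.
\end{itemize}
Then $Y$ consists of the three group edges and the three contacts $a''p_x$. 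Hence
\[
|X|=3+9+6=18=2|S|,
\]
and every packed edge outside $U$ lies in $Y\subseteq X$.

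The only step needing care is making the nine triangles edge disjoint. The triangles at different group vertices are disjoint automatically. The group triangle and the contact triangle at $x$ share only $x$, and the contact edges $xp_x$ are distinct for distinct $x$ even if the chosen $p_x$ coincide. The one possible conflict is an edge $xp_x$ that is also used by a triangle on an edge of $U$. So I need
\[
p_u\notin\{c_{uv},c_{wu}\},\quad p_v\notin\{c_{uv},c_{vw}\},\quad p_w\notin\{c_{vw},c_{wu}\}.
\]
First fix any $p_x\in P_x$, then choose the three common vertices greedily, each distinct from those already chosen:
\begin{itemize}
\item $c_{uv}\in C\setminus\{p_u,p_v\}$, with at least $3$ choices;
\item $c_{vw}\in C\setminus\{p_v,p_w,c_{uv}\}$, with at least $2$ choices;
\item $c_{wu}\in C\setminus\{p_w,p_u,c_{uv},c_{vw}\}$, with at least $1$ choice.
\end{itemize}
This uses $|C|=5$ exactly, and I expect it to be the main point to verify.

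With these choices, $S$ is a packing of $9$ triangles, $X$ has $18$ edges, and $X$ meets every triangle meeting $U$. So $U$ is reducible.
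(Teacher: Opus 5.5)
Your proposal is correct and follows the paper's proof: it uses the same nine triangles (one on each edge of the triangle $uvw$ at distinct leaves avoiding the chosen contacted leaves $p_x$, plus a group-edge triangle and a contact triangle at each of $u,v,w$) and the same eighteen-edge cover. Your explicit greedy choice of $c_{uv},c_{vw},c_{wu}$ just spells out the paper's remark that for each pair of hubs at least three leaves avoid their two contacted leaves.
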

\begin{proof}
For $h\in\{u,v,w\}$, name its private vertices $a_h,b_h,c_h$ with $a_hb_h$ present and choose one contacted leaf $p_h$.
Pack $ha_hb_h,hc_hp_h$ and pick $ha_h,hb_h,hc_h,a_hb_h,c_hp_h$.
For each pair $h,k$ from $\{u,v,w\}$, at least three leaves avoid $p_h,p_k$.
\leavevmode\vspace{0.5em} \\
Pack one triangle on each of $uv,uw,vw$ using distinct leaves, and pick these three edges.
We get $|S|=9$ and $|X|=18$. Every triangle meeting $\{u,v,w\}$ meets $X$, and every packed edge outside these vertices belongs to $X$.
\end{proof}
The common graph $K_{1,5}$ also gives a reduction.
Let $w$ be its center and $C$ its $5$ leaves.
The codegree bound forbids a private contact to $w$ in either endpoint link.
\leavevmode\vspace{0.5em} \\
Lemma \ref{two-hub-ten} gives three private vertices at each endpoint, inducing $P_3$ or $K_3$ and sharing two neighbors in $C$.
The link at $w$ has the component of order $7$ on $u,v,C$, so by robustness it's connected.
\leavevmode\vspace{0.5em} \\
Its further neighbors miss $u,v$, and Lemma \ref{two-hub-ten} gives the third private group.
The endpoint neighborhoods make these groups disjoint, so apply Lemma \ref{three-hub-five-ten}.
\begin{lemma}\label{four-private-ten}
Let $L$ be connected with $V(L)=\{h,k\}\dot\cup C\dot\cup D$, $|C|=4$, $|D|\le4$, $E(L[C])=\varnothing$, $hk\in E(L)$ and $N_L(h)\setminus\{k\}=N_L(k)\setminus\{h\}=C$.
Either $L$ is WKE or $J=L[C\cup D]$ has a matching $M$ and a set $Q$ such that
\[
|M|\ge2,\,|Q|\le|M|+1,\,E(J-Q)\subseteq M,\,|M\cap E(C,D)|\le2.
\]
\end{lemma}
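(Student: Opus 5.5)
We assume $L$ is not WKE and build the pair $(M,Q)$ in $J$ from the structure of $J$ alone. The vertices $h,k$ are adjacent to each other and to all of $C$. A WKE witness for $L$ can therefore always be taken to contain $h$ and $k$ in $Q$, together with a witness for $J$. Suppose $J$ has a matching $M_0$ and a set $Q_0$ with $E(J-Q_0)\subseteq M_0$ and $|Q_0|\le|M_0|-2$. Then $M_0$ and $Q_0\cup\{h,k\}$ make $L$ WKE. There is one refinement. If some $r\in C\setminus(Q_0\cup V(M_0))$ exists, we may add $hr$ to $M_0$, and then $|Q_0|\le|M_0|-1$ already suffices. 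If moreover $r\ne r'$ are both free in $C$, we may add $hr,kr'$ and put only nothing extra in $Q$, so $|Q_0|\le|M_0|$ suffices. Hence failure of WKE says that for every witness $(M_0,Q_0)$ of $J$, the slack $|M_0|-|Q_0|$ is below a threshold. The threshold depends on how many vertices of $C$ are left free, outside $Q_0$ and outside $V(M_0)$.

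The plan is to split by the matching number $m$ between $C$ and $D$ and the structure of $J[D]$, mirroring the case analysis of Lemma \ref{one-hub-ten}. By K\"onig's theorem \cite{LovaszPlummer} the contact graph $E(C,D)$ has a vertex cover $K$ with $|K|=m\le4$. Take $Q=K\cup Q_D$, where $Q_D\subseteq D$ covers the edges of $J[D]$ missed by a maximum matching $M_D$ of $J[D]$. Let $M$ consist of $M_D$ together with contact edges kept disjoint from $V(M_D)$. Since $|D|\le4$, the graph $J[D]$ has at most $6$ edges and $\nu_m(J[D])\le2$. A small cover $Q_D$ with $|Q_D|\le|M_D|+1$ exists, because $P_3$, $K_3$, $K_4$ and the like all have a cover of size at most $\nu_m+1$.

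The target conclusion asks for $|M|\ge2$, $|Q|\le|M|+1$ and at most two contact edges in $M$. We therefore keep at most two contacts in $M$ and put the remaining contact-cover vertices into $Q$. If $m\le2$, we take $M=M_D$ plus $m$ contact edges, adjusting so that $|M|\ge2$. If $m\ge3$, we use two contacts and let $K$ absorb the rest, which costs an extra vertex in $Q$ per dropped contact. Here the leftover free vertices of $C$ give the edges $hr,kr'$ that would make $L$ WKE. So either the counts close to $|Q|\le|M|+1$, or they beat the threshold and $L$ is WKE.

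The main obstacle is the case where $m$ is large (three or four) and $J[D]$ is dense, for example $J[D]=K_4$ or $K_3$ with a pendant vertex, with contacts spread over all of $C$. Then restricting $M$ to two contacts forces $Q$ to be too big. I would handle this by choosing $Q\supseteq D\cap V(\text{contacts})$ rather than vertices of $C$, which leaves vertices of $C$ free for $hr,kr'$. I would finish the residual small configurations by direct enumeration over contact patterns, as in Case 3 of Lemma \ref{one-hub-ten}. This could also be checked by the same computer verification used for Lemma \ref{subcubic-basis-ten}.
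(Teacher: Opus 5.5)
\textbf{There is a genuine gap.} Your reduction rests on the claim that a WKE witness for $L$ can always be taken with $h,k\in Q$. That claim is false. If $h\notin Q$, then every edge $hc$ with $c\in C\setminus Q$ lies in $L-Q$, and at most one of them can be in $M$. So a witness either contains both $h,k$ or contains at least three vertices of $C$, and the second type genuinely occurs.

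Take $D$ independent of size $4$ with every vertex of $D$ adjacent to every vertex of $C$. Then $J=K_{4,4}$ and every edge of $J$ is a contact.
\begin{itemize}
\item The second alternative fails. It forces $|M|=2$ and $|Q|\le 3$, but $J-Q$ still contains $K_{1,4}$ or $K_{2,3}$.
\item No witness with $h,k\in Q$ exists. For $J-Q_0$ to span a matching we need $C\subseteq Q_0$, $D\subseteq Q_0$ or $|Q_0|=6$. So $|Q|\ge 6$, while $|M|\le 5$ on $10$ vertices.
\end{itemize}
Your dichotomy (a $J$-witness with enough slack relative to the free leaves, or the second alternative) therefore leaves this graph uncovered. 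Your proposed remedy, putting contacted vertices of $D$ into $Q$ to free leaves for $hr,kr'$, makes matters worse here. Yet $L$ is WKE: take $Q=C$ and $M=\{hk\}$ plus three disjoint contacts, since $L-C$ has only the edge $hk$. This is exactly the paper's Case 1. When $D$ is independent and the contact matching number is at least three, it uses $Q=C$ with $hk\in M$. Otherwise it adds $h,k$ to a minimum contact cover and matches them to two free leaves.

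\textbf{The rest is still a sketch.} Taking $Q=K\cup Q_D$ and using contacts disjoint from $V(M_D)$ does not close the count in general, because all contacts may meet $V(M_D)$. The residual configurations are also deferred to an unspecified enumeration.

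The paper's proof is a short explicit split by $|D|$ and $L[D]$. For $|D|\in\{3,4\}$ it first tries $Q=D$:
\begin{itemize}
\item a private edge $ab$ with a contact at the third vertex ($|D|=3$), or
\item a private edge $ab$ with disjoint contacts at the other two vertices ($|D|=4$).
\end{itemize}
Either gives the second alternative with $|Q|=|M|+1$.

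The idea you are missing is what the failure of this choice forces. The contacts at the two private vertices outside a private edge then have a cover of size at most one. Since isolated private vertices must have contacts by connectivity, in the star and triangle cases all relevant contacts pass through a single leaf $\ell$. This yields the small sets $\{a,b\}\cup K$, $\{a,b,\ell\}$, $\{b,\ell\}$ and $\{b\}\cup K$. The configurations left over are a path with all contacts at its center, or a star with no leaf contacts. These fall to witnesses $\{h,k,b\}$ together with two free leaves.
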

\begin{proof}
\leavevmode\vspace{0.5em} \\
\noindent\textbf{Case 1: $D$ is independent.}\vspace{0.5em} \\*
For contact matching number at least three, use three contacts and $hk$ with $Q=C$.
Otherwise add $h,k$ to a minimum contact cover and match them to two unused leaves.
\leavevmode\vspace{0.5em} \\
\noindent\textbf{Case 2: $|D|=2$.}\vspace{0.5em} \\*
The private edge is present and the contact matching number is one or two.
For two, use $Q=\{h,k\}\cup D$, two contacts and an edge from each of $h,k$ to distinct unused leaves.
For one, use $Q=\{h,k\}$ together with the contact star center, and the private edge with an edge from each of $h,k$ to distinct unused leaves.
\leavevmode\vspace{0.5em} \\
\noindent\textbf{Case 3: $|D|=3$.}\vspace{0.5em} \\*
A private edge $ab$ and a contact $c\ell$ give $M=\{ab,c\ell\}$ and $Q=D$.
Otherwise $L[D]$ is a path with all contacts at its center.
Use the center and $h,k$ for $Q$, with a private edge and an edge from each of $h,k$ to distinct unused leaves for $M$.
\leavevmode\vspace{0.5em} \\
\noindent\textbf{Case 4: $|D|=4$.}\vspace{0.5em} \\*
A private edge $ab$ and disjoint contacts $c\ell,d\ell'$ give $M=\{ab,c\ell,d\ell'\}$ and $Q=D$.
Assume no such choice exists.
If $\{ab,cd\}$ is a private perfect matching, the contacts at $c,d$ have a cover $K$ with $|K|\le1$, so use
\[
M=\{ab,cd\},\,Q=\{a,b\}\cup K.
\]
Every other private edge meets $a$ or $b$.
\leavevmode\vspace{0.5em} \\
The private graphs left are stars and triangles with isolates.
For one edge $ab$ and isolates $c,d$, both isolates contact the same sole leaf $\ell$, so take $M=\{ab,c\ell\}$ and $Q=\{a,b,\ell\}$.
\leavevmode\vspace{0.5em} \\
For a path with center $b$, leaves $a,c$ and isolate $d$, take $M=\{ba,d\ell\}$.
Use $Q=\{b,d\}$ if neither private leaf contacts $C$.
Otherwise their contacts and those of $d$ all use the same leaf $\ell$, so take $Q=\{b,\ell\}$.
\leavevmode\vspace{0.5em} \\
For a three leaf star centered at $b$, the leaf contacts have matching number at most one.
If none occur, $Q=\{h,k,b\}$ gives WKE.
Otherwise choose a contacting leaf $a$, another leaf $c$ and a contact cover $K$ of size one, and take $M=\{bc,a\ell\}$ and $Q=\{b\}\cup K$.
\leavevmode\vspace{0.5em} \\
For a triangle $a,b,c$ and isolate $d$, take $M=\{ab,d\ell\}$.
Use $Q=\{a,b,d\}$ if the triangle has no contact.
Otherwise its contacts and those of $d$ all use the same leaf $\ell$, so take $Q=\{a,b,\ell\}$.
\end{proof}
\begin{lemma}\label{star-isolate-six-ten}
If the selected common graph is $K_{1,4}\cup K_1$ and $|E(H)|$ is maximum, then $G$ has a reduction.
\end{lemma}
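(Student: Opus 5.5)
The selected common graph is $H=K_{1,4}\cup K_1$, so $c=5$. Write $w$ for the star center, $C'=\{c_1,\dots,c_4\}$ for its leaves and $z$ for the isolate, so $C=\{w,z\}\cup C'$. The plan follows the $K_{1,5}$ argument given just before Lemma \ref{four-private-ten}, with the isolate $z$ as the new complication.

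\textbf{Step 1: structure at the endpoints.} The edge $uw$ has common neighbors $v$ and $C'$, so its codegree is at least $5$. Since $c=5$ is maximum, $w$ has no private contact in either endpoint link. The link at $u$ therefore consists of the hub $v$ adjacent to all of $C$, the second hub $w$ adjacent to $v$ and to $C'$, and the private set $A$ with $|A|\le 4$ by Lemma \ref{endpoint-degrees-ten}. Deleting $z$ from the leaf set does not give exactly the configuration of Lemma \ref{two-hub-ten}, since $|N(w)\cap C|=4$. Still, the proof of Lemma \ref{two-hub-ten} (put $w$ in $Q$ and match it to a free leaf) goes through whenever $|A|\le3$. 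It shows the link at $u$ is WKE, and hence gives a vertex reduction by Lemma \ref{wke-reduction-ten}, unless $A$ is one of the exceptional groups: three vertices inducing $P_3$ or $K_3$ with complete contacts to a pair in $C$. The remaining case $|A|=4$ occurs only when $d(u)=10$. Here I would treat $\{v,w\}$ as the pair $h,k$ of Lemma \ref{four-private-ten}, with $C'$ as its four leaves, and absorb $z$ into $D$ or handle it separately. That lemma either gives WKE directly, or gives a matching $M$ and a set $Q$ in $L[C\cup D]$ with $|Q|\le|M|+1$. In the second case, adding $hk=vw$ to $M$ and covering $h,k$ through $Q$ should again yield a WKE witness. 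The same analysis applies at $v$, so both $A$ and $B$ are exceptional three-vertex groups.

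\textbf{Step 2: use maximality of $|E(H)|$.} This is where the extremal choice matters. If a private group at $u$ has its contact pair $\{p,q\}$ and $z$ is among them, consider the edge $uz$ or $up$. It may have codegree $5$ with a common graph having more edges than $K_{1,4}\cup K_1$. Such an edge would contradict the choice of $uv$ maximizing $|E(H)|$, or exceed codegree $5$. I expect this to force the contact pairs of $A$ and $B$ into $C'$, and I expect the isolate $z$ then to have link neighbors only $u,v$ among $N[u]\cup N[v]$.

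\textbf{Step 3: a third hub and Lemma \ref{three-hub-five-ten}.} Following the $K_{1,5}$ case, look at the link at $w$. It contains $u,v$ and $C'$ as a connected piece of order at least $6$, so by robustness the link at $w$ is connected. Applying Step 1 at $w$, whose common structure with $u$ and $v$ is the same hub configuration, gives a third private group at $w$. The endpoint neighborhoods make the three groups disjoint. The configuration is then Lemma \ref{three-hub-five-ten} with $C'\cup\{z\}$ playing the role of $C$, except that $z$ may not be adjacent to $w$. I would adapt that proof directly. Pack $ha_hb_h$ and $hc_hp_h$ for each $h\in\{u,v,w\}$. Then pack one triangle on each of $uv,uw,vw$, using distinct leaves of $C'$ avoiding the $p_h$, and put $uz,vz$ into $X$ in place of the missing $wz$. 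This requires checking that $|X|\le 2|S|=18$, and it may need the isolate triangle $uvz$ as the $uv$ triangle to save an edge.

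The main obstacle is Step 2 together with the $|A|=4$ case of Step 1. Exactly there the isolate $z$ breaks the clean hub symmetry, and the tie-breaking on $|E(H)|$ has to be used carefully to rule out contacts at $z$. If it fails, I would fall back on the interval method of Lemma \ref{interval-ten} with a 2-coloring $\chi$ placing $A$ and $B$ on opposite sides, possibly with a computer check.
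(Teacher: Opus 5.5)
Your proposal has genuine gaps, at the isolate and at the star center.

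\textbf{The codegree is $c=6$.} $K_{1,4}\cup K_1$ has six vertices, so $c=6$, not $5$. Hence $|A|,|B|\le 10-6-1=3$, and your $|A|=4$ case never occurs. Lemma \ref{two-hub-ten} applies directly at $u$ with $h=v$, $k=w$ and the five independent leaves $C'\cup\{z\}$, since it allows $|N(k)\cap C|=4$.

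The codegree bound no longer forbids a private contact at $w$, because such a contact $a$ only makes $uw$ have codegree exactly $6$. You need maximality of $|E(H)|$ here. The contact pair of $a$'s group lies in $C'\cup\{z\}$, so it contains a leaf. Then the common graph of $uw$ has the four edges from $v$ to $C'$ plus a contact edge, which is more than $4$.

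\textbf{Step 2 is backwards.} If neither group contacted $z$, then $N(u)\cap N(z)=\{v\}$ and $N(v)\cap N(z)=\{u\}$. So $\{u,v\}$ would be a component of order $2$ in the link at $z$, contradicting robustness. Maximality cannot push the contacts into $C'$ either, since $uz$ and $u\ell$ have codegree at most $5$. The correct conclusion is that some group, say at $u$, contacts $z$. The paper exploits exactly this: pack $uab,ucz$ and pick $ua,ub,uc,ab,cz$, so the construction at $u$ uses no leaf.

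\textbf{Step 3 fails at $w$.} In the link at $w$, the hubs $u,v$ see only the four leaves $C'$, because $zw\notin E(G)$. Also $w$ may have up to four further neighbors. Lemmas \ref{one-hub-ten} and \ref{two-hub-ten} therefore do not apply, so there is no third three-vertex group and Lemma \ref{three-hub-five-ten} is unavailable.

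The right tool is Lemma \ref{four-private-ten} with $h,k=u,v$. Either the link at $w$ is WKE and $w$ reduces, or you get $M,Q$ with $|Q|\le|M|+1$. In the second case, the $|M|$ triangles at $w$ use at most two leaves $T\subseteq C'$, and selecting $M\cup E(\{w\},Q)$ costs $2|M|+1$.

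With only four leaves and $|T|$ possibly $2$, the triangles on $vw,uw,uv$ fit only because $u$'s construction used $z$. Let $V=\{p\}\cap C'$ for $v$'s contacted vertex $p$. Choose distinct leaves, in that order, from
$C'\setminus(T\cup V)$, $C'\setminus T$ and $C'\setminus V$, whose sizes are at least $1,2,3$. This gives $|S|=|M|+7$ and
$|X|\le 5+5+(2|M|+1)+3=2|S|$.

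If $u$ also consumed a leaf, this selection can fail. For example, $T=\{1,2\}$ and $p_u=p=3$ leave only leaf $4$ for both $vw$ and $uw$.

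\textbf{Your final count does not close.} The three hub blocks and $uv,uw,vw$ already give $|X|=18=2|S|$, so adding $uz,vz$ overshoots. Those two edges are unnecessary anyway: $uv$ and the edges from $u,v$ to their private vertices already cover every triangle through $z$.
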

\begin{proof}
Let $t$ be the star center, $C$ its four leaves and $s$ the isolate.
Lemma \ref{two-hub-ten} gives three private vertices at each endpoint, inducing $P_3$ or $K_3$ and sharing two neighbors in $C\cup\{s\}$.
A private contact at $t$ would give an edge of codegree $6$ whose common graph has the four edges from the other endpoint to $C$ and a private contact, contradicting maximality of $|E(H)|$.
\leavevmode\vspace{0.5em} \\
So both groups miss $t$, and robustness makes one group contact $s$.
In that group, take a private edge $ab$, pack $uab,ucs$ and pick $ua,ub,uc,ab,cs$.
At $v$, use the same construction with a contacted leaf $p$.
\leavevmode\vspace{0.5em} \\
Each cover has size $5$.
The connected link at $t$ has at most four vertices outside $\{u,v\}\cup C$, all missing $u,v$.
Reduce $t$ if its link is WKE.
\leavevmode\vspace{0.5em} \\
Otherwise Lemma \ref{four-private-ten} gives $M,Q$ with $k=|M|$, using at most two leaves $T\subseteq C$.
Pack the $k$ triangles at $t$ on $M$ and select $M\cup E(\{t\},Q)$, at cost at most $2k+1$.
\leavevmode\vspace{0.5em} \\
Put $V=\{p\}\cap C$.
For $vt,ut,uv$, choose distinct leaves in order from
\[
C\setminus(T\cup V),\,C\setminus T,\,C\setminus V.
\]
Their sizes are at least one, two and three, so the choices exist.
\leavevmode\vspace{0.5em} \\
Pack these three triangles and pick $vt,ut,uv$.
Then $|S|=k+7$ and
\[
|X|\le5+5+(2k+1)+3=2|S|.
\]
The cover meets every triangle through $u,v,t$ and has every packed edge outside these vertices.
\end{proof}
\subsection{No private vertices}
\begin{lemma}\label{d10sparse}
Assume $N(u)=C\cup\{v\}$, $N(v)=C\cup\{u\}$ and $|C|\ge3$.
If $\Delta(H-z)\le2$ for some $z\in V(H)$, where $H=G[C]$, then $\{u,v\}$ is reducible.
\end{lemma}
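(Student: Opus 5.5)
The setting is very rigid: $N(u)=C\cup\{v\}$ and $N(v)=C\cup\{u\}$, so $A=B=\varnothing$ and there are no contacts. Every triangle at $u$ or $v$ is either $uvc$ for some $c\in C$, or $ucc'$ or $vcc'$ for an edge $cc'$ of $H$. I would take $U=\{u,v\}$ and build $S$ and $X$ inside $J=G[\{u,v\}\cup C]$. We need $|X|\le 2|S|$, $X$ must meet all these triangles, and every packed edge of $H$ must lie in $X$; edges $uc,vc,uv$ may be packed freely.

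The plan is to use the hub $z$ as the one vertex whose star we pay for. Since $\Delta(H-z)\le2$, the graph $H':=H-z$ is a disjoint union of paths and cycles. The idea is to pack one triangle on $uv$ and two triangles on each of several edges of $H$: for an edge $e=cc'$, both $ucc'$ and $vcc'$ can be packed only if $e$ is used once. So instead I pack triangles $ucc'$ and $vdd'$ on two edge-disjoint families of edges of $H$, and put those edges into $X$.

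\textbf{Cover choice.} I would take $X\supseteq\{uz,vz\}$, which kills every triangle through $z$ at $u$ or $v$, including $uvz$. Then I put into $X$ a set $Y$ of edges of $H'$ meeting every triangle $ucc'$ and $vcc'$ with $cc'\in E(H')$. Since those triangles are exactly the edges of $H'$, I simply need $Y=E(H')$. To handle $uvc$ for $c\ne z$, I would instead use the colouring device from the Preliminaries: colour $u,v$ differently and colour $C$ by $\chi$, so that $X_J=\{ab:\chi(a)=\chi(b)\}$ covers all local triangles. With $\chi(u)=0$, $\chi(v)=1$, each $uvc$ is covered by $uc$ or $vc$, and $ucc'$ is covered whenever $\chi(c)=\chi(c')$, or $\chi(c)=0$, or $\chi(c')=0$.

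\textbf{Counting.} The cost is $|C|$ (one of $uc,vc$ for each $c$) plus the number of monochromatic edges of $H$ among vertices of colour $1$ (with respect to $u$), and the symmetric count for $v$. That is too crude, so I would instead count directly. Let $m=|E(H')|$ and take $X=\{uz,vz\}\cup E(H')\cup\{uc: c\in C\setminus\{z\}\}$, or a cheaper variant. For the packing, pack $uvz'$ for some $z'$, and decompose $E(H')$ into two matchings $M_u,M_v$. This is possible when $H'$ has no odd cycle, and otherwise leaves one edge per odd cycle. Then pack $uM_u$ and $vM_v$. The triangles on $uc$ and $vc$ used are disjoint as long as each vertex of $C$ lies in at most one edge of $M_u$ and one of $M_v$, which holds by the degree bound. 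This gives roughly $|S|\approx m+1$ against a cover of about $m+|C|$. To balance, I would drop the $uc$ edges: each remaining $uvc$ ($c\ne z$) should be covered by selecting $uc$ only for vertices $c$ missed by $M_u$, and $vc$ for those matched. I expect this to bring $|X|$ to at most $2+m+(\text{unsaturated count})$, with $|S|=m_{\text{packed}}+1$.

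\textbf{Main obstacle.} The delicate step is the bookkeeping on odd cycles and isolated or path-end vertices of $H'$, where the matchings fail to saturate. There I would use $z$'s edges to $C$: pack triangles $uzc$ or $vzc$ (their edges at $u,v$ and $zc$ are allowed in $X$ since $zc\in E(H)$ must then be selected). This gives an extra packed triangle per deficient component to pay for the extra cover edge. If $|C|=3$, or $H$ is very sparse, I would fall back on Lemma~\ref{wke-reduction-ten} applied to the link at $u$, which is then small or WKE. The final verification is a case split by component type of $H'$: paths of even or odd length, and even or odd cycles, with respect to adjacency to $z$. I expect this case split to be the bulk of the work.
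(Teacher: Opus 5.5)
\textbf{There is a genuine gap in your proposal: the cover for the triangles $uvc$ with $c\in C\setminus\{z\}$.} The covers you settle on are $\{uz,vz\}\cup E(H-z)\cup\{uc:c\in C\setminus\{z\}\}$ and the variant taking $uc$ or $vc$ according to whether $M_u$ misses or saturates $c$. Both spend one edge per vertex of $C\setminus\{z\}$. So $|X|=m+|C|+1$ against $|S|=m-o+1$, with $m=|E(H-z)|$ and $o$ the number of odd cycles.

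Your estimate $|X|\le 2+m+(\text{unsaturated count})$ does not follow. For a vertex $c$ saturated by $M_u$, nothing in your $X$ meets $uvc$ unless you also pay for $uc$ or $vc$. The shortfall is not a boundary effect. If $H-z$ is a perfect matching with $k$ edges, then $|C|=2k+1$ and you need $3k+2\le 2k+2$, which fails for every $k\ge1$.

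Neither repair you suggest closes this. Every extra triangle $uzc$ or $vzc'$ uses $uz$ or $vz$, so at most two can be added, and only if $z$ has neighbours in $H$. The fallback to a WKE link at $u$ only reduces $\{u\}$, not $\{u,v\}$ as the lemma asserts.

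\textbf{The missing idea is to select the single edge $uv$, which meets every triangle $uvc$ at once.} Let $M_u,M_v$ partition the edges of $H-z$ left after deleting one edge from each odd cycle, and take
$S=\{uvz\}\cup\{uxy:xy\in M_u\}\cup\{vxy:xy\in M_v\}$ and $X=E(H-z)\cup\{uv,uz,vz\}$.
\begin{itemize}
\item $X$ meets every triangle at $u$ or $v$: via $uv$, via $uz$ or $vz$ if it passes through $z$, and via its edge of $H-z$ otherwise.
\item $X$ contains every packed edge of $H$.
\item When $m>0$, each component of $H-z$ with an edge contributes at least one to $m-2o$, so $|X|=m+3\le 2(m-o+1)=2|S|$.
\end{itemize}
If $m=0$ and $z$ has an $H$-neighbour $x$, choose $y\in C\setminus\{z,x\}$ using $|C|\ge3$, and use $S=\{uzx,uvy\}$, $X=\{uv,zx,uz,vz\}$. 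If $H$ is empty, use $S=\{uvz\}$, $X=\{uv\}$. This is the paper's proof, and it needs no case split by component type beyond the single inequality $m-2o\ge1$.
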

\begin{proof}
Put $J=H-z$ and write $r=|E(J)|$.
Let $o$ count the odd cycle components of $J$.
Delete one edge from each such cycle and partition the remaining edges into two matchings $M_u,M_v$.
\leavevmode\vspace{0.5em} \\
\noindent\textbf{Case 1: $r>0$.}\vspace{0.5em} \\*
Take
\[
S=\{uxy:xy\in M_u\}\cup\{vxy:xy\in M_v\}\cup\{uvz\},
\]
\[
X=E(J)\cup\{uv,uz,vz\}.
\]
The matchings give a packing $S$, and $X$ covers every triangle through $u,v$ and contains every packed edge outside them.
\leavevmode\vspace{0.5em} \\
Each component with an edge contributes at least one to $r-2o$, so
$|X|=r+3\le2(r-o+1)=2|S|$.
\leavevmode\vspace{0.5em} \\
\noindent\textbf{Case 2: $r=0$.}\vspace{0.5em} \\*
If $zx\in E(H)$, choose $y\in C\setminus\{z,x\}$ and take $S=\{uzx,uvy\}$ and $X=\{uv,zx,uz,vz\}$.
If $E(H)=\varnothing$, take $S=\{uvz\}$ and $X=\{uv\}$.
Both choices cover every triangle through $u,v$ and pick every packed edge outside them.
\end{proof}
\subsection{$K_{3,4}$}
\begin{lemma}\label{d10k34}
Let $\Delta(G)\le10$ and let $uv$ have common graph $K_{3,4}$ with parts $P=\{p_1,p_2,p_3\}$, $Q=\{q_1,q_2,q_3,q_4\}$ and private pairs $A=\{a_1,a_2\}$, $B=\{b_1,b_2\}$, satisfying
\[
P\subseteq\bigcap_{x\in A\cup B}N(x),\,
\{q_1,q_2,q_3\}\subseteq N(a_1),\,
\{q_2,q_3,q_4\}\subseteq N(a_2).
\]
Then $U=\{u,v\}\cup P$ is reducible.
\end{lemma}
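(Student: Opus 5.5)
The plan is to build one explicit pair $(S,X)$ and to handle the unknown edges with Lemma \ref{interval-ten}. The first step is a degree count that makes the reduction entirely local.

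\textbf{Locality.} Each $p_i$ is adjacent to $u,v$, to all of $Q$ (since $H=K_{3,4}$), and to all of $A\cup B$ by hypothesis. That already gives $10$ neighbours, so $\Delta(G)\le10$ forces
\[
N(p_i)=\{u,v\}\cup Q\cup A\cup B .
\]
Likewise $u$ is adjacent to $v$, $P$, $Q$ and $A$, so $d(u)=10$, and similarly $d(v)=10$. Consequently every triangle meeting $U=\{u,v\}\cup P$ lies inside $J=G[N[u]\cup N[v]]$. In the cost formula for $Z=P$ the outside term $\sum_{z\in P}(10-d_J(z))$ therefore vanishes.

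\textbf{Known and unknown edges.} $J^-$ consists of the forced edges:
\begin{itemize}
\item $uv$;
\item the edges from $u$ and $v$ to $P\cup Q$, from $u$ to $A$, and from $v$ to $B$;
\item $E(P,Q)$ and $E(P,A\cup B)$;
\item the six prescribed contacts $a_1q_1,a_1q_2,a_1q_3,a_2q_2,a_2q_3,a_2q_4$.
\end{itemize}
The edges not determined by the hypotheses are $a_1q_4$, $a_2q_1$, $E(B,Q)$, $a_1a_2$, $b_1b_2$ and $E(A,B)$. The sets $P$ and $Q$ are each independent, because $H=K_{3,4}$. I put all the unknown edges into $J^+$, then choose a packing $S$ inside $J^-$ and a cover $X$ of all triangles at $u$, $v$ and $P$ in $J^+$ with $|X|\le 2|S|$.

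\textbf{Triangles at $P$.} Lemma \ref{interval-ten} is stated only for triangles at $u$ or $v$. I will apply its argument verbatim with $U$ in place of $\{u,v\}$: the lower inclusion keeps the packed edges and the upper inclusion keeps coverage. This is legitimate because every edge at $P$ is forced and so lies in $J^-$.

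\textbf{Construction.} I would look for a $2$-colouring $\chi$ and take $X_J$ to be its monochromatic edges; edges of $E(A,B)$ cost nothing once $A$ and $B$ get opposite colours. The packing $S$ will combine four kinds of triangles:
\begin{itemize}
\item $uvq_j$;
\item $up_iq_j$ and $vp_iq_j$ along disjoint matchings of $K_{3,4}$;
\item $p_ia_kq_j$ on the prescribed contacts;
\item $p_ib_kx$, where the third vertex $x$ must be $v$, since $v$ is the only vertex adjacent to $b_k$ in $J^-$ other than $P$.
\end{itemize}
The aim is $|S|$ around $12$--$15$, using the $30$ edges at $P$ as fully as possible, with every packed edge outside $U$ monochromatic. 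Each of $A$ and $B$ sees all of $P$. So for a colouring with $P$ constant, a colour can be saved by making the $A\cup B$ contacts at $P$ bichromatic. The contacts to $Q$ are then paid for only where packed triangles use them. As in Lemma \ref{d10k34}'s setting of Case 2 of Lemma \ref{c5-six}, an unpacked edge that is not forced to be covered can be handled by picking the $u$-edge or $v$-edge to one of its endpoints instead.

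\textbf{Main obstacle.} The hard step is fitting $|X|\le 2|S|$ uniformly over all choices of the unknown edges. The unknown contacts $B$--$Q$ and $a_1q_4$, $a_2q_1$ create triangles $p_ib_kq_j$ and $p_ia_kq_j$ that must be covered even though they cannot be relied on for packing. I would try first $\chi(P)=0$, $\chi(Q)=1$, $\chi(A)=1$, $\chi(B)=0$, with $u$ and $v$ coloured to minimise the monochromatic edges at $u$ and $v$, and then trade individual monochromatic edges for direct vertex covers from $\{u,v\}$ as needed. If no single colouring works, I would fall back on a short computer-assisted search over the pairs $(S,X)$ in $J^-\subseteq J^+$, in the spirit of the supplement used for Lemma \ref{subcubic-basis-ten}.
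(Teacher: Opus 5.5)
Your locality step is correct and matches the paper. Since $N(p_i)=\{u,v\}\cup Q\cup A\cup B$ and $d(u)=d(v)=10$, every triangle meeting $U$ lies in $J$ and no outside edges need to be paid for. Choosing $\chi(u),\chi(v)$ to minimise the cost at $u,v$ in your tentative colouring puts $u$ with $P\cup B$ and $v$ with $Q\cup A$, which is exactly the paper's colouring. The gap is that you stop there. You never bound $|X|$ and never exhibit a packing, and you leave $|X|\le2|S|$ as the unresolved ``main obstacle'' with a computer search as fallback. That inequality is the entire content of the lemma, so as written nothing is proved.

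The missing step is short. For the colouring $\{u\}\cup P\cup B$ versus $\{v\}\cup Q\cup A$, the only monochromatic pairs that can be edges are:
\begin{itemize}
\item $uP$, $vQ$ and $E(P,B)$, which are thirteen forced edges;
\item $E(A,Q)$, at most $8$ edges;
\item $a_1a_2$ and $b_1b_2$.
\end{itemize}
The pairs in $E(B,Q)$, $E(A,B)$ and $uv$ are bichromatic. So $|X|\le23$ for every choice of the undetermined edges. Since every triangle of $J$ has a monochromatic edge, those edges need no further treatment. This removes your concern about the triangles $p_ib_kq_j$ and $p_ia_kq_j$, and the detour through Lemma \ref{interval-ten} is unnecessary.

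It then suffices to find $12$ edge-disjoint triangles on forced edges whose edges avoiding $U$ lie in $E(A,Q)$. Your own triangle types already give this. Decompose $K_{3,4}$ into the four matchings saturating $P$
\[
M_{a_1}=\{p_1q_1,p_2q_3,p_3q_2\},\quad M_{a_2}=\{p_1q_3,p_2q_2,p_3q_4\},
\]
\[
M_u=\{p_1q_4,p_2q_1,p_3q_3\},\quad M_v=\{p_1q_2,p_2q_4,p_3q_1\},
\]
and take the triangles $hxy$ with $xy\in M_h$. The $a_k$-classes use only prescribed contacts, and these are the only packed edges avoiding $U$. The paper lists a different $12$-packing, which also uses $ua_1q_3$, $uvq_4$ and $p_1vb_1$; either way $|X|\le23<24=2|S|$.

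A smaller point: your remark about saving cost by making the $A\cup B$ contacts at $P$ bichromatic conflicts with giving $A$ and $B$ opposite colours. In your own colouring $E(P,B)$ is monochromatic and costs $6$.
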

\begin{proof}
Every vertex of $U$ has degree $10$ in $J$.
Color $P\cup\{u,b_1,b_2\}$ and $Q\cup\{v,a_1,a_2\}$ differently, and take $X$ as the local edges of equal endpoint colors.
\leavevmode\vspace{0.5em} \\
The edges $uP,vQ,PB$ contribute $13$ to $|X|$, and the contacts from $A$ to $Q$ and the two private edges contribute at most $10$.
Edges between $A,B$ have different endpoint colors, so $|X|\le23$.
\leavevmode\vspace{0.5em} \\
Take this packing with $|S|=12$.
\begin{center}
\begin{minipage}{\textwidth}
\centering\setlength{\tabcolsep}{3pt}
\begin{tabular}{@{}c@{}}
\begin{tabular}{@{}l@{}}
\PackingRow{q_1}{p_1/u,p_2/a_1,p_3/v}\\[0.18em]
\PackingRow{q_2}{p_1/a_1,p_2/a_2,p_3/u}\\[0.18em]
\PackingRow{q_3}{p_1/a_2,p_2/v,u/a_1}\\[0.18em]
\PackingRow{q_4}{p_3/a_2,u/v}\\[0.18em]
\PackingRow{p_1}{v/b_1}\\[0.18em]
\end{tabular}
\end{tabular}

\end{minipage}
\end{center}
The triangles don't share any edges, with $E(S)\setminus E(U,V(G))\subseteq E(A,Q)\subseteq X$.
Every triangle meeting $U$ meets $X$, with $|X|\le23<24=2|S|$.
\end{proof}
\subsection{At most 5 missing edges}
\begin{lemma}\label{d10fiveholes}
Let $\Delta(G)\le10$ and let $uv\in E(G)$ have $9$ common neighbors.
If
\[
|E(\overline{G[N(u)\cap N(v)]})|\le5,
\]
then $N[u]\cup N[v]$ is reducible.
\end{lemma}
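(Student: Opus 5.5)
The plan is to apply the coloring framework of the local reductions with $Z=C$.
Since $uv$ has codegree $9$ and $\Delta(G)\le10$, we get $N(u)=C\cup\{v\}$ and $N(v)=C\cup\{u\}$, so $A=B=\varnothing$ and $U=N[u]\cup N[v]=\{u,v\}\cup C$ has $11$ vertices.
Write $m=|E(\overline H)|\le5$.
Then $G[U]$ is $K_{11}$ minus the $m$ missing edges of $H$.
For $z\in C$ we have $10-d_J(z)=8-d_H(z)=d_{\overline H}(z)$, so the outside cost is $\sum_{z\in C}(10-d_J(z))=2m\le10$.
Every packed triangle lies inside $U$, so the condition $E(S)\setminus E(U,V(G))\subseteq X$ holds automatically.
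It therefore suffices to find $\chi:U\to\{0,1\}$ and a packing $S$ in $G[U]$ with
\[
|X_J|+2m\le2|S|,
\]
where $X_J$ is the set of monochromatic present edges of $G[U]$.

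First I would fix the shape of the coloring.
Take a $5/6$ split of $U$, which gives $\binom52+\binom62=25$ monochromatic pairs of $K_{11}$.
Place the missing edges of $H$ inside color classes wherever possible, so that $|X_J|=25-m_{\mathrm{in}}$, where $m_{\mathrm{in}}$ counts the missing edges that are monochromatic.
Since $\overline H$ has at most $5$ edges on $9$ vertices, a greedy or exhaustive choice of the split should make $m_{\mathrm{in}}$ close to $m$.
The target then becomes
\[
|S|\ \ge\ \bigl\lceil(25-m_{\mathrm{in}}+2m)/2\bigr\rceil,
\]
which is at most $15$ when $m=m_{\mathrm{in}}=5$, and at most $13$ when $m=0$.

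Second I would build the packing.
The packing number of $K_{11}$ is $17$, and a maximum packing has leave $C_4$.
For each complement graph $\overline H$ with at most $5$ edges, I would choose a maximum packing of $K_{11}$ whose leave absorbs as many missing edges as possible, then discard the triangles that still use a missing edge.
Each discarded triangle costs one.
So I need to show that at most $17-15=2$ triangles are lost in the worst case, or else trade a lost triangle for a missing edge that the chosen coloring leaves bichromatic.
Relabelings of $K_{11}$ act transitively enough that only the isomorphism type of $\overline H$ matters, together with the fact that $u,v$ are universal in $U$.
That leaves a short finite list: the graphs with at most $5$ edges on $9$ vertices.
I would handle these by explicit packing tables in the style of the earlier lemmas, or by a machine check like the one referenced for Proposition \ref{forty-ten}.

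The main obstacle is this packing step when $m=5$ and the five missing edges spread out, for example as a matching or a star.
There the $C_4$ leave can absorb few of them, and a monochromatic placement of all five may be impossible in a $5/6$ split.
In that case I would first try an unbalanced split, say $4/7$, which makes $|X_J|$ larger but lets the missing edges cluster in one class.
If needed I would also drop some $z$ from $Z$: a vertex incident with several missing edges pays those outside edges only when it lies in $Z$.
Leaving it out instead requires the edge reduction variant, where its triangles outside $J$ are covered through its edges to $u$ and $v$.
I expect a small case split on the degree sequence of $\overline H$ to settle which of these two adjustments closes the inequality.
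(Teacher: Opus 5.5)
Your reduction is the one the paper uses: $Z=C$, a $5/6$ two-coloring of $U$ with cost $|X_J|+2m$, and a $17$-triangle packing of $K_{11}$ whose $C_4$ leave absorbs missing edges. The gap is the step you flag yourself and then defer to tables or a computer. For $m=5$ you need $|S|\ge15$. Putting two missing edges in the leave and deleting one triangle for each of the other three gives only $14$.

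The missing idea is that the packing can be chosen to contain a prescribed triangle as well as a prescribed leave. The paper's explicit packing $S_0$ contains triangles meeting the leave cycle in $0$, in $1$, and in $2$ opposite vertices, so after relabeling any triangle avoiding the leave is packed. If $\overline H$ contains a three-edge path, put that path in the leave and lose at most two triangles. Otherwise every nontrivial component of $\overline H$ is a star or a triangle. Prescribe the triangle through two edges of a star, or the triangle component itself, while the leave takes two other missing edges. Then one deleted triangle removes two or three missing edges, and at most two triangles are lost in total. For $m\le4$ the naive count $17-\max\{0,m-2\}$ already suffices, since any two edges lie on a four-cycle.

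Second, you never show that a $5/6$ split makes every missing edge monochromatic, and you suspect it can fail. It cannot, and you need it even at $m=4$, where $m_{\mathrm{in}}\le2$ raises the target to $16$. The vertices $u,v$ are isolated in $\overline H$, every component of $\overline H$ has order at most $6$, and five missing edges cannot form a matching on the $9$ vertices of $C$. So a component of order $5$ or $6$ (for instance your $K_{1,5}$) is a class by itself. A largest component of order $4$ or $3$ is completed to a $5$-class by $u$, or by $u$ and $v$. A matching with at most four edges gives a class made of one matching edge and three isolates.

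Your proposed remedies point the wrong way:
\begin{itemize}
\item A $4/7$ split has $\binom42+\binom72=27$ monochromatic pairs.
\item Leaving a missing edge bichromatic only increases $|X_J|$ compared with placing it inside a class.
\item Removing a vertex from $Z$ reduces a set other than $N[u]\cup N[v]$, so it would not prove the lemma as stated.
\end{itemize}
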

\begin{proof}
Put $C=N(u)\cap N(v)$ and $U=\{u,v\}\cup C$, and let $F$ have vertex set $U$ and edge set $E(\overline{G[C]})$, with $r=|E(F)|\le5$.
Both endpoints have degree $10$, so $G[U]=K_{11}-E(F)$.
\leavevmode\vspace{0.5em} \\
Take the below packing with $|S_0|=17$.
The unused edges form the cycle $0,1,2,3,0$.
\begin{center}
\begin{minipage}{\textwidth}
\centering
\begin{tabular}{@{}l@{\hspace{0.65cm}}l@{}}
\PackingRow{0}{2/4,5/6,7/8,9/10}&\PackingRow{2}{5/7,6/9,8/10}\\[0.18em]
\PackingRow{1}{5/10,3/8,4/9,6/7}&\PackingRow{3}{4/5,7/9,6/10}\\[0.18em]
\PackingRow{5}{8/9}&\PackingRow{4}{6/8,7/10}\\[0.18em]
\end{tabular}

\end{minipage}
\end{center}
A triangle avoiding the leave has $0$, $1$ or $2$ cycle vertices, and two cycle vertices must be opposite.
Packing $S_0$ has a triangle of each type, so relabel the cycle and the other $7$ vertices to include any chosen triangle avoiding the leave.
\leavevmode\vspace{0.5em} \\
\noindent\textbf{Case 1: $r\le4$.}\vspace{0.5em} \\*
Choose a leave cycle with $\min\{r,2\}$ edges of $F$, since any two edges lie on a four cycle.
Delete any packed triangle using another edge of $F$.
For $r\le4$ this gives $|S|\ge17-\max\{0,r-2\}$.
\leavevmode\vspace{0.5em} \\
\noindent\textbf{Case 2: $r=5$.}\vspace{0.5em} \\*
A three edge path in $F$ fits in the leave and gives at least $15$ triangles.
Otherwise each nontrivial component of $F$ is a star or a triangle.
If a triangle component occurs, put the other two missing edges in the leave and prescribe that triangle in $S_0$, then delete it.
\leavevmode\vspace{0.5em} \\
If a star has at least four edges, put two of its edges in the leave and prescribe the triangle through two other star edges.
If a star has three edges, put one of its edges and one other missing edge in the leave and prescribe the triangle through its two remaining edges.
If the largest star has two edges, prescribe its two edges in one triangle and put two other missing edges in the leave.
\leavevmode\vspace{0.5em} \\
In each star case delete at most two triangles.
Since $\nu_m(F)\le4$, we get $|S|\ge15$ when $r=5$.
\leavevmode\vspace{0.5em} \\
Partition $U$ into classes of sizes $5,6$ without splitting a component of $F$.
A component of order $5$ or $6$ forms a class, and a largest component of order $3$ or $4$ needs $2$ or $1$ isolates.
The vertices $u,v$ supply both isolates.
\leavevmode\vspace{0.5em} \\
If all nontrivial components have order $2$, at most four occur, so one component and three isolates form the smaller class.
If $F$ is empty, choose any $5$ vertices.
\leavevmode\vspace{0.5em} \\
Select every local edge within a class and every edge from $C$ to a vertex outside $U$.
The local cover has size $25-r$, and the outside edges number at most $\sum_{c\in C}d_F(c)=2r$.
So $|X|\le25+r\le2|S|$.
\leavevmode\vspace{0.5em} \\
Every triangle meeting $U$ meets $X$, and every packing edge lies in $G[U]$.
\end{proof}
\subsection{$K_{2,4}$}
Let $uv\in E(G)$ and put $C=L\cup\{r,s\}$ with $|L|=4$ and $G[C]=K_{2,4}$.
The private pairs are $\{a,b\}$ and $\{c,d\}$, and $k$ counts the edges between them.
\leavevmode\vspace{0.5em} \\
Put $R=\{u,v,r,s,a,b,c,d\}$ and take $U=\{u,v\}\cup L$ in the first three lemmas, with
\[
X=E(G[R])\cup E\bigl(L,V(G)\setminus(R\cup L)\bigr).
\]
A local triangle meeting $U$ has an edge of $G[R]$ since $L$ is independent.
Every other triangle meeting $U$ uses a picked outside edge at a leaf.
\leavevmode\vspace{0.5em} \\
An edge coloring of a graph on $R$ with four colors gives four matchings with no shared edge.
Assign one matching to each leaf $z$ and replace every edge $xy$ of that matching by the triangle $zxy$.
A matching must avoid any private vertex missing its leaf.
\leavevmode\vspace{0.5em} \\
These triangles and any packed triangle on $R$ have every edge outside $U$ in $X$.
\begin{lemma}[Fournier {\cite{Fournier,HiltonZhao}}]\label{fourcolors}
If $\Delta(G)\le4$ and the vertices of degree four induce a forest, then $G$ has an edge coloring with four colors.
\end{lemma}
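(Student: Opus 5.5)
This is Fournier's theorem, and the plan is the standard Vizing fan and alternating path argument, organized by an ordering of the forest. Let $G$ have $\Delta(G)\le4$, and let the degree-four vertices $W$ induce a forest $T$. Argue by induction on $|E(G)|$. Since $T$ is a forest, some $w\in W$ has at most one neighbor in $W$; if $W=\varnothing$, Vizing's theorem already gives $\Delta+1\le4$ colors. Choose an edge $e=wy$ at such a vertex $w$, taking $y\notin W$ whenever $w$ has a neighbor outside $W$. Deleting $e$ leaves every vertex of degree at most four, and the degree-four vertices of $G-e$ form a subset of $W$, which still induces a forest. So induction gives a proper $4$-edge-coloring $\varphi$ of $G-e$.

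Next extend $\varphi$ to $e$ using Vizing's adjacency argument. Recall the key lemma from Vizing's proof: if $\varphi$ colors $G-e$ with $k\ge\Delta$ colors and fails to extend, then a maximal fan at $w$ starting with $wy$ must contain a second vertex $z$ with $d(z)=k$. The proof runs by recoloring along a fan and swapping a Kempe chain, an $\alpha/\beta$ alternating path. Apply it with $k=4$. If the extension fails, then $w$ is adjacent through the fan to at least two vertices of degree four besides itself, one of them being $y$ or a later fan vertex.

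Using the lemma in this crude form does not suffice. Instead I would strengthen the induction hypothesis in the usual way: every neighbor of $w$ that is \emph{not} of degree $4$ has a missing color, and these can be exploited. Concretely, every fan vertex $x_i$ with $d(x_i)\le3$ misses at least one color. Since $w$ has at most one neighbor in $W$, at most one fan vertex has degree four. Vizing's fan argument then shows that an uncolorable configuration needs a fan vertex $x_j\ne x_0$ missing a color already missing at an earlier fan vertex. It also needs the $\alpha/\beta$ chain from $w$ to end at the right vertex. Trace the chain ending: the chain starting at $w$ ends at $x_j$ or at $x_{i-1}$. Swapping it yields a shifted fan coloring that colors $e$, and this step needs only the single-degree-four-neighbor property, which guarantees that some fan vertex of degree at most $3$ misses a color not missing at $w$. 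The clean route is to order fan vertices so the unique degree-four neighbor, if it is $y$ at all, is handled first.

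The main obstacle is the second step. The plain Vizing adjacency lemma requires \emph{two} degree-$k$ fan vertices, whereas $w$ itself also has degree $4$. One must check that "two vertices of degree $4$ adjacent to $w$ in the fan" really contradicts $w$ having at most one $W$-neighbor. I expect this to need the full Vizing adjacency lemma: it gives at least $k-d(y)+1\ge2$ neighbors of $w$ of degree $k$ whenever $d(y)\le3$ is chosen. Choosing $y\notin W$ then contradicts the forest leaf choice directly. If $w$ has no neighbor outside $W$, then $w$ is an isolated or leaf vertex of $T$ with all $4$ neighbors in $W$, which is impossible unless $d(w)\le1$. So the choice of $y$ always exists, and this is the point I would verify most carefully.
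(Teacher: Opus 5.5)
The paper does not prove this lemma; it quotes Fournier's theorem from the literature. The argument you end with in your last paragraph is the standard derivation of Fournier's theorem from Vizing's adjacency lemma, and it is correct: take $w$ a leaf or isolated vertex of the forest $G[W]$, color $G-e$ for an edge $e=wy$ by induction, and observe that a failure to extend would force $w$ to have two neighbors of degree $4$. But the route to it rests on a misreading of the fan lemma, and the middle of the proposal should be deleted rather than repaired.

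The ``crude'' fan lemma already suffices, and the worry that $w$ itself has degree $4$ is misplaced. Suppose $G$ is not $4$-edge-colorable and $\varphi$ colors $G-e$ with $e=wy_0$, and take a maximal multi-fan $y_0,\dots,y_m$ at $w$. The missing-color sets of $w,y_0,\dots,y_m$ are pairwise disjoint. By maximality, the colors $\varphi(wy_1),\dots,\varphi(wy_m)$ are exactly the colors missing at the $y_i$. With degrees taken in $G$, $y_0$ misses $5-d(y_0)$ colors and $y_i$ misses $4-d(y_i)$ for $i\ge1$, so counting gives
\[
\sum_{i=1}^{m}\bigl(d(y_i)-3\bigr)=5-d(y_0).
\]
Every term is at most $1$, with equality exactly when $d(y_i)=4$. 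So at least $5-d(y_0)$ of $y_1,\dots,y_m$ lie in $W$. Adding $y_0$ when $d(y_0)=4$, at least two fan vertices lie in $W$ in every case. Fan vertices are neighbors of $w$ and the center is never counted, so this contradicts the choice of $w$ immediately, for any $y_0$.

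Your third paragraph is therefore unnecessary. It proposes a strengthened hypothesis that is never stated, plus a chain-tracing sketch whose key claim (a low-degree fan vertex missing a color not missing at $w$) is unjustified, so as written it is not an argument. The condition $y\notin W$ is harmless but not needed. The closing remark about $w$ having no neighbor outside $W$ is garbled: $d(w)=4$ with at most one $W$-neighbor gives at least three neighbors outside $W$.

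Finally, state the hypotheses correctly. The fan lemma needs ``$G$ is not $4$-colorable'', not ``this $\varphi$ fails to extend'', since its proof alters $\varphi$ by fan shifts and Kempe swaps. If you cite the full adjacency lemma, you also need $G$ critical. That holds because every $G-f$, not only $G-e$, satisfies the hypotheses and is $4$-colorable by induction, while $\Delta(G)=4$ as $W\ne\varnothing$.
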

For $\Delta(G)<4$, adjoin a disjoint $K_{1,4}$ and apply the theorem.
Each color class is a matching.
\begin{lemma}\label{k24twoedges}
Assume $\Delta(G)\le10$, every edge has codegree at most $6$, $ab,cd$ occur, $a,c$ join every leaf and one center, and $b,d$ join no center and miss at most one leaf each.
Then $\{u,v\}\cup L$ is reducible.
\end{lemma}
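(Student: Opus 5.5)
The plan is to use the cover set up before Lemma~\ref{fourcolors}, with $U=\{u,v\}\cup L$ and
\[
X=E(G[R])\cup E\bigl(L,V(G)\setminus(R\cup L)\bigr),
\]
and to build the packing from an edge $4$-coloring of a subgraph $W\subseteq G[R]$ given by Lemma~\ref{fourcolors}. Each leaf receives one color class $M_z$, and each edge $xy\in M_z$ becomes the triangle $zxy$. The requirements already recorded before Lemma~\ref{fourcolors} then give that $X$ meets every triangle at $U$ and contains every packed edge outside $U$. So only the count $|X|\le2|S|$ and the existence of the triangles remain.

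\textbf{Counting $|X|$.} The graph $G[R]$ consists of $uv$, the eight edges from $u,v$ to their common centers and private vertices, $ab$, $cd$, the two center contacts at $a$ and $c$, and the $k$ edges between the private pairs. So $|E(G[R])|=13+k$. This needs a check that no further edges occur: $r,s$ are nonadjacent in $K_{2,4}$, $b,d$ join no center, and $a,c$ join exactly one center each. For the outside edges, each leaf $\ell$ is adjacent in $R$ to $u,v,r,s,a,c$, so $d_R(\ell)\ge6$. Moreover $b$ and $d$ each miss at most one leaf, so together they contribute at least $6$ more. Hence
\[
\sum_{\ell\in L}d_R(\ell)\ge30,
\]
and $\Delta\le10$ leaves at most $10$ outside edges at $L$. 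So $|X|\le23+k$, and the target is
\[
|S|\ge12+\lceil k/2\rceil.
\]
The codegree bound $6$ gives $k\le4$, since an edge such as $ac$ would otherwise have too many common neighbors among the leaves. Its finer use will be to restrict which cross edges are present.

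\textbf{Choosing $W$.} In $G[R]$ the vertices $u,v$ have degree $5$. Remove $uv$ and, if needed, one cross edge so that $\Delta(W)\le4$ and the degree-$4$ vertices induce a forest. Then apply Lemma~\ref{fourcolors}, padding with a disjoint $K_{1,4}$ if $\Delta<4$. The removed edge $uv$ is still useful: pack a separate triangle $uvy$ for a vertex $y$ whose edges $uy,vy$ are not in $W$. This means taking $y$ a center and dropping $ur$ and $vr$, say, from $W$ instead. Then $W$ keeps about $12+k$ edges, which is more than the $11+\lceil k/2\rceil$ triangles needed at the leaves. Only color classes whose edges all lie in $N(z)$ can be used at a leaf $z$. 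All vertices of $R$ except $b,d$ are adjacent to every leaf, and $b$ and $d$ each miss at most one leaf.

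\textbf{Main obstacle.} The hard step is assigning the four color classes to the four leaves so that the class at the leaf missing $b$ avoids $b$, and similarly for $d$. The class containing each edge at $b$ or $d$ must go to a leaf adjacent to that vertex. My first attempt is to recolor by Kempe swaps so that all edges at $b$ use three colors not assigned to $b$'s missing leaf, and likewise for $d$. This is possible when $d_W(b),d_W(d)\le3$. Otherwise I would delete an offending edge, losing one triangle, and absorb the loss either in the slack between $12+k$ and $12+\lceil k/2\rceil$ or by adding triangles on $R$ such as $uab$ or $vcd$ whose edges were dropped from $W$.

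The case $k=0$, where $b$ and $d$ both miss a leaf and there is no slack, is where I expect the tightest bookkeeping. There I would split according to whether $b$ and $d$ miss the same leaf, and write explicit packings for each subcase.
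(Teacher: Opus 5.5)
Your setup agrees with the paper's. You use the same $X$, the count $|E(G[R])|=13+k$, at most $10$ outside edges at $L$ (so $|X|\le 23+k$), and leaf triangles from four matchings on $R$. The resulting target is $|S|\ge 12+\lfloor k/2\rfloor$, not $12+\lceil k/2\rceil$; the paper uses $12$, $13$, $14$ triangles for $k\le1$, $2\le k\le3$, $k=4$.

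The proposal stops exactly where the content of the lemma lies. With this $X$, every packed triangle is either a leaf triangle on an edge of $G[R]$ or a triangle of $G[R]$, and $u,v$ have degree $5$ in $G[R]$. So for $k=0$ a packing of size $12$ must consist of leaf triangles only, with $uv$ the unique unused edge. Then $G[R]-uv$ must be decomposed exactly into four matchings, one per leaf, with nothing left to delete. Your variant with a separate triangle $uvr$ leaves $W$ with $10+k$ edges, not about $12+k$, and so gives at most $11$ triangles here.

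Nor does an arbitrary coloring from Lemma \ref{fourcolors} work. For $t=r$, the proper coloring $\{ua,vd,rc\},\{ub,vc,ra\},\{us,cd,vr\},\{ab,ur,vs\}$ of $G[R]-uv$ has every class meeting $b$ or $d$, although $d_W(b)=d_W(d)=2$. So it fails when $b$ and $d$ miss the same leaf. The requirement is joint: one class must avoid both $b,d$ when their missed leaves coincide, and two distinct classes must avoid them otherwise. Your claim that Kempe swaps achieve this whenever $d_W(b),d_W(d)\le3$ is unproven, since a chain swapped at $b$ can end at $d$ and move both. This is why the paper does not use Lemma \ref{fourcolors} here, unlike in Lemmas \ref{k24oneedge} and \ref{k24fullleaf}, where at most one private vertex misses a leaf. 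Panel 1, for instance, has the class $us,vc,ra$ avoiding both, and the classes $ua,rv,cd$ and $ub,rc,vs$ avoiding $b$ and $d$ respectively.

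The large-$k$ end also fails as stated.
\begin{itemize}
\item The bound $k\le4$ is trivial. The real use of the codegree bound is that $t=r$ and $k=4$ would give $ac$ the seven common neighbors $L\cup\{r,b,d\}$.
\item For $t=s$, $k=4$, the vertices $a,c$ have degree $5$ in $G[R]-uv$. The only single cross edge whose removal restores $\Delta\le4$ is $ac$.
\item After removing $ac$, the degree-$4$ vertices $u,v,a,b,c,d$ span the triangles $uab$ and $vcd$, so Lemma \ref{fourcolors} does not apply.
\end{itemize}

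What is missing is the explicit case analysis. The paper exhibits eight concrete matching systems (Panels 1--8), chosen by whether $a,c$ share a center, which cross edges occur, and whether the leaves missed by $b,d$ coincide. In each case it checks edge-disjointness and $2|S|\ge23+k$.
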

\begin{proof}
Relabel the centers so that $a$ joins $r$ and write $t$ for the center joined by $c$.
The $13$ fixed edges on $R$ and at most $10$ outside edges give $|X|\le23+k$.
\leavevmode\vspace{0.5em} \\
Assign the four matchings in one of Panels 1 through 8 to distinct leaves.
Use a matching avoiding $b$ for its missed leaf and do the same for $d$.
\newpage
\begin{center}
\begin{tikzpicture}[baseline=-0.5ex]
\foreach \PackingStyle/\PackingNumber [count=\PackingIndex from 0] in {leaf one/1,leaf two/2,leaf three/3,leaf four/4}{%
\draw[\PackingStyle] (1.50*\PackingIndex,0)--(1.50*\PackingIndex+0.48,0);
\node[anchor=west,font=\small] at (1.50*\PackingIndex+0.53,0) {$\ell_{\PackingNumber}$};}
\end{tikzpicture}
\end{center}
\begin{center}
\begin{minipage}{\textwidth}
\centering\setlength{\tabcolsep}{3pt}
\begin{tabular}{@{}cc@{}}
\begin{minipage}[t]{0.47\textwidth}\centering\textbf{Panel 1. $t=r$}\\[0.2em]
\LeafPacking{u/r,a/b,v/d}{u/a,r/v,c/d}{u/b,r/c,v/s}{u/s,v/c,r/a}{}
\end{minipage}&
\begin{minipage}[t]{0.47\textwidth}\centering\textbf{Panel 2. $t=r$}\\[0.2em]
\LeafPacking{a/b,c/d,v/s}{u/a,b/d,r/c}{u/b,a/c,v/d}{u/s,v/c,r/a}{u/v/r}
\end{minipage}\\[0.8em]
\begin{minipage}[t]{0.47\textwidth}\centering\textbf{Panel 3. $t=r$}\\[0.2em]
\LeafPacking{u/r,a/c,v/d}{u/b,r/a,v/c}{v/r,a/b,c/d}{u/a,r/c,b/d}{u/v/s}
\end{minipage}&
\begin{minipage}[t]{0.47\textwidth}\centering\textbf{Panel 4. $t=s$}\\[0.2em]
\LeafPacking{u/a,v/c}{u/b,v/d,r/a,s/c}{a/b,c/d,u/s,v/r}{u/r,v/s}{}
\end{minipage}
\end{tabular}

\end{minipage}
\end{center}
\begin{center}
\begin{minipage}{\textwidth}
\centering\setlength{\tabcolsep}{3pt}
\begin{tabular}{@{}cc@{}}
\begin{minipage}[t]{0.47\textwidth}\centering\textbf{Panel 5. $t=s$}\\[0.2em]
\LeafPacking{u/r,v/c,a/b}{u/b,v/r,a/d,s/c}{u/a,v/s,c/d}{u/s,v/d,r/a}{}
\end{minipage}&
\begin{minipage}[t]{0.47\textwidth}\centering\textbf{Panel 6. $t=s$}\\[0.2em]
\LeafPacking{a/b,c/d,v/s}{u/a,b/c,v/d}{u/b,a/d,s/c}{r/a,v/c,u/s}{u/v/r}
\end{minipage}\\[0.8em]
\begin{minipage}[t]{0.47\textwidth}\centering\textbf{Panel 7. $t=s$}\\[0.2em]
\LeafPacking{r/a,c/d,v/s}{u/a,b/c,v/d}{u/b,a/d,s/c}{a/b,u/s,v/c}{u/v/r}
\end{minipage}&
\begin{minipage}[t]{0.47\textwidth}\centering\textbf{Panel 8. $t=s$}\\[0.2em]
\LeafPacking{u/s,a/c,v/d}{u/b,r/a,v/c}{u/r,v/s,c/d,a/b}{v/r,s/c,b/d,u/a}{}
\end{minipage}
\end{tabular}

\end{minipage}
\end{center}

Panels 2, 3, 6 and 7 also pack the red triangle on $R$.
The four matchings and that triangle don't share any edges.
\leavevmode\vspace{0.5em} \\
\noindent\textbf{Case 1: $k\le1$.}\vspace{0.5em} \\*
Use Panel 1 or 4 according to $t$.
Both give $12$ triangles and allow different missing leaves.
\leavevmode\vspace{0.5em} \\
\noindent\textbf{Case 2: $t=r$, $2\le k\le3$.}\vspace{0.5em} \\*
Add $ad$ to the $\ell_3$ matching of Panel 1 if it occurs, or exchange the two private sides if $bc$ occurs instead.
The remaining case with $t=r$ has cross edges $ac,bd$.
Use Panel 2 if the missing leaves coincide or at most one exists, and Panel 3 otherwise.
\leavevmode\vspace{0.5em} \\
Four cross edges with $t=r$ give $a,c$ $7$ common neighbors, contrary to the codegree bound.
\leavevmode\vspace{0.5em} \\
\noindent\textbf{Case 3: $t=s$, $2\le k\le3$.}\vspace{0.5em} \\*
Add $ac$ to the $\ell_4$ matching of Panel 4 when it occurs.
If $ac$ is absent and $bd$ occurs, add $bd$ to the $\ell_1$ matching when the missing leaves don't differ.
For different missing leaves use Panel 5 if $ad$ occurs or exchange the two private sides and the centers if only $bc$ occurs.
\leavevmode\vspace{0.5em} \\
If neither $ac$ nor $bd$ occurs, the two cross edges are $ad,bc$.
Use Panel 6 when the missing leaves don't differ and Panel 7 otherwise.
These constructions give $13$ triangles.
\leavevmode\vspace{0.5em} \\
\noindent\textbf{Case 4: $t=s$, $k=4$.}\vspace{0.5em} \\*
Add $bd$ to the $\ell_1$ matching and $ac$ to the $\ell_4$ matching of Panel 4 when the missing leaves don't differ.
Otherwise use Panel 8.
Both give $14$ triangles.
\leavevmode\vspace{0.5em} \\
Each missed private vertex has an avoiding matching, with distinct matchings for different missed leaves.
We have $2|S|\ge23+k\ge|X|$, so $U$ is reducible.
\end{proof}
In Forms 1 through 3, $c,d$ join $v$ and every leaf.
The center contacts are $rc,sc$ in Form 1 and $rc,sd$ in Forms 2 and 3, and $cd$ occurs only in Form 3.
\begin{lemma}\label{k24oneedge}
Assume $\Delta(G)\le10$ and
\[
ab\in E(G),\ N(a)\cap C=L\cup\{r\},\ N(b)\cap\{r,s\}=\varnothing,\ |L\setminus N(b)|\le1.
\]
If $c,d$ join every leaf and have one of Forms 1 through 3, then $\{u,v\}\cup L$ is reducible.

\end{lemma}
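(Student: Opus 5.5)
We follow the template of Lemma \ref{k24twoedges}. Take $U=\{u,v\}\cup L$ and the cover
\[
X=E(G[R])\cup E\bigl(L,V(G)\setminus(R\cup L)\bigr)
\]
fixed before that lemma. Every local triangle meeting $U$ has an edge in $G[R]$ because $L$ is independent. Every other triangle meeting $U$ uses a picked outside edge at a leaf.

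\textbf{Counting $|X|$.} The edges of $G[R]$ are:
\begin{itemize}
\item the $9$ edges from $u,v$ to $\{r,s\}$, $uv$, and the private edges at $u$ ($ua,ub$) and at $v$ ($vc,vd$), which are $13$ fixed edges as before;
\item $ab$ and $ar$;
\item the center contacts of $c,d$ (two in every Form) and $cd$ in Form 3;
\item the $k$ cross edges.
\end{itemize}
The outside edges at leaves number at most $\sum_{\ell\in L}(10-d_J(\ell))$. Each leaf already has degree $2+2$ from $u,v,r,s$, plus $a,c,d$, plus $b$ unless it is the missed leaf. So there are at most $10$ outside edges in total, and at most $11$ if $b$ misses a leaf. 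Adding these up should give $|X|\le 27+k$ in Forms 1 and 2 and $|X|\le 28+k$ in Form 3, up to the missed-leaf correction.

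\textbf{Packing.} The target is therefore $|S|\ge 14+\lceil k/2\rceil$ or so. We use the four-matching construction: edge-color the graph on $R$ formed by the allowed edges into four matchings and assign one matching to each leaf. A matching may include $b$ only if $b$ joins that leaf. We add triangles packed inside $G[R]$, such as $uvr$, $uvs$, $uab$, or $vcd$ in Form 3. Every edge of $G[R]$ lies in at most one packed triangle, and the leaf triangles use only $E(L,R)\cup E(G[R])$. The third reducibility condition then holds automatically, since all packed edges outside $U$ lie in $G[R]\cup E(L,R)$, which is inside $X$.

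\textbf{Case split.} Each panel should be first built from Lemma \ref{fourcolors} and then listed explicitly, case by case:
\begin{itemize}
\item by Form, which fixes the center contacts $rc,sc$ versus $rc,sd$ and whether $cd$ is present;
\item by the set of cross edges among $ac,ad,bc,bd$, with $k\le 4$; the codegree bound $\le6$ rules out configurations giving $a$ and $c$ or $d$ seven common neighbors, as in Case 2 of Lemma \ref{k24twoedges};
\item by whether $b$ misses a leaf.
\end{itemize}
Where they apply, we use the symmetries exchanging $c,d$ (Forms 2 and 3) and the two centers, as in Lemma \ref{k24twoedges}.

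\textbf{Main obstacle.} The hard step is exhibiting, in every case, four edge-disjoint matchings on $R$ plus extra triangles reaching $2|S|\ge|X|$. Here $d_{G[R]}(a)$ can be large, $a$ must appear in all four matchings to cover the edges $a\ell$, and $b$ is forbidden in one matching. I would first try to take Panels 1 and 4 of Lemma \ref{k24twoedges}, and modify them to absorb the extra edges $sc$ or $sd$ and $cd$ into spare matchings or a triangle $vcd$ or $vsc$. If a count falls one short, I would use the slack from the missed leaf: a missed leaf also lowers the outside-edge count at that leaf. If that fails, I would verify the finitely many remaining configurations by the computer check in \cite{Supplement}.
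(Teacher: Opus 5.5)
Your proposal has a genuine gap. The count that sets your target is wrong, and the construction that should meet it is never given.

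\textbf{The count.} Put $\beta=1$ in Form 3 and $\beta=0$ otherwise. Then $G[R]$ has exactly $13+\beta+k$ edges: $uv$, the four edges from $u,v$ to $r,s$, the edges $ua,ub,vc,vd$, then $ab$, $ar$, the two center contacts of $c,d$, $cd$ in Form 3, and the $k$ cross edges. Your first bullet lists only $9$ edges, yet you carry it as $13$. You then add $ab$, $ar$ and the center contacts, which the $13$ of Lemma \ref{k24twoedges} already included, so four edges are counted twice. Each leaf has $8$ neighbors in $R$ ($7$ at the leaf $b$ may miss), so there are at most $9$ outside edges, not $10$ or $11$. The correct bound is $|X|\le22+\beta+k$.

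\textbf{Your target is unreachable.} With this $X$, every packed triangle lies in $G[R\cup L]$ and uses at least one edge of $G[R]$. Hence $|S|\le13+\beta+k$, which is $13$ when $k=\beta=0$, below your target $|S|\ge14+\lceil k/2\rceil$. So no panels aimed at that target exist. Deferring leftover cases to \cite{Supplement} is not an option either, since the supplement covers only Lemma \ref{subcubic-basis-ten} and Proposition \ref{forty-ten}. Also, $a$ need not appear in every matching: the reduction does not require packing the edges $a\ell$, and triangles on them already meet $X$.

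\textbf{The missing construction} is short.
\begin{itemize}
\item Pack $uvr$ and delete $uv,ur,vr$. This lowers $u,v$ from degree $5$ to $3$ and $r$ to $2$. The remaining degrees are $3$ at $u,v,s,a$, $2$ at $r,b$, and $(3,1)$, $(2,2)$, $(3,3)$ at $(c,d)$ in Forms 1, 2, 3.
\item Keep only $q$ cross edges: $q=\lceil k/2\rceil$ in Forms 1 and 2, and $q=\lfloor k/2\rfloor$ in Form 3. For $q=2$, take a pair meeting each of $a,b$ once (and $c$ at most once in Form 1), and take $ac,bd$ in Form 3.
\item The resulting graph has $\Delta\le4$ and $d(b)\le3$, and its degree-four vertices induce a forest (at most two vertices, or the path $acd$). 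So Lemma \ref{fourcolors} gives four matchings. Since $d(b)\le3$, one of them misses $b$; assign it to the missed leaf.
\end{itemize}
This gives $|S|=11+\beta+q$ and $2|S|=22+2\beta+2q\ge22+\beta+k\ge|X|$.

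The idea you are missing is that the cross edges need not all be absorbed. Each cross edge costs one cover edge whether or not it is used, while each kept one adds two to $2|S|$, so keeping about half of them suffices.
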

\begin{proof}
Put $\beta=0$ in Forms 1 and 2 and $\beta=1$ in Form 3.
At most $9$ outside edges occur, so $|X|\le22+\beta+k$.
\leavevmode\vspace{0.5em} \\
Pack $uvr$ and delete its three edges from the graph on $R$.
Keep
\[
q=\begin{cases}\lceil k/2\rceil&\beta=0,\\\lfloor k/2\rfloor&\beta=1\end{cases}
\]
cross edges, with any edge when $q=1$.
\leavevmode\vspace{0.5em} \\
For $q=2$, use a pair meeting each of $a,b$ once and $c$ at most once in Form 1, a pair meeting each of $a,b$ once in Form 2, and $ac,bd$ in Form 3.
In Forms 1 and 2, $k\ge3$ gives such a pair, and Form 3 has $k=4$.
\leavevmode\vspace{0.5em} \\
Before these additions, $u,v,s,a$ have degree three and $r,b$ have degree two.
The degrees at $c,d$ are $3,1$ in Form 1, $2,2$ in Form 2 and $3,3$ in Form 3.
\leavevmode\vspace{0.5em} \\
Each choice gives $\Delta\le4$, $d(b)\le3$ and a forest on the vertices of degree four.
The forest is $acd$ for the two edge choice in Form 3 and has at most two vertices otherwise.
\leavevmode\vspace{0.5em} \\
Lemma \ref{fourcolors} gives four color matchings.
A color class avoids $b$ since its degree is at most three, so assign that class to its missed leaf and assign the other classes to the other leaves.
The leaf triangles together with $uvr$ form a packing of size $11+\beta+q$.
\leavevmode\vspace{0.5em} \\
For $\beta=0$ we have $k\le2q$, and for $\beta=1$ we have $k-1\le2q$.
So $|X|\le22+\beta+k\le2|S|$.
\end{proof}
\begin{lemma}\label{k24fullleaf}
Suppose $\Delta(G)\le10$, every private vertex joins every leaf, and each side has one of the three forms in Lemma \ref{k24oneedge}.
Then $\{u,v\}\cup L$ is reducible.
\end{lemma}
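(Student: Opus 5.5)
We reuse the setup preceding Lemma \ref{k24twoedges}: $U=\{u,v\}\cup L$, $R=\{u,v,r,s,a,b,c,d\}$, and
\[
X=E(G[R])\cup E\bigl(L,V(G)\setminus(R\cup L)\bigr).
\]
Here every private vertex joins every leaf. So each leaf already has its $6$ neighbors in $R\cup\{u,v\}$, namely $u,v,r,s$ and the four private vertices. Hence each leaf has at most $4$ outside edges, and at most $16$ in total. The task is to bound $|E(G[R])|$ in terms of the two forms and $k$, and to build a packing of leaf triangles plus triangles on $R$ of size at least $|X|/2$.

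First we count. On $R$ the fixed edges are $uv$, the four edges $ur,us,vr,vs$, the four private edges to their endpoint, the two private edges inside the pairs, the center contacts, and the $k$ cross edges. With $\beta_A,\beta_B\in\{0,1\}$ recording Form 3 on each side, this gives $|E(G[R])|=11+4+\beta_A+\beta_B+k$ up to bookkeeping. The outside edges number at most $16$, since each private vertex now supplies a leaf neighbor. We then check whether $\Delta\le10$ forces fewer outside edges, using that a leaf adjacent to $u,v,r,s,a,b,c,d$ has only two free slots. That gives at most $8$ outside edges, so
\[
|X|\le 21+\beta_A+\beta_B+k+\ldots .
\]
We target $2|S|$ matching this bound, roughly $|S|\ge 12+\lceil(\beta_A+\beta_B+k)/2\rceil$.

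For the packing, we pack $uvr$ (or $uvs$) on $R$ and delete its edges. Then we keep about half the cross edges, chosen as in Lemma \ref{k24oneedge} so they meet vertices evenly. The remaining graph on $R$ should have $\Delta\le4$, with the degree-four vertices inducing a forest. Lemma \ref{fourcolors} then gives four matchings, which we assign bijectively to the four leaves. No private vertex misses a leaf, so there is no avoidance constraint, which is the simplification over Lemma \ref{k24oneedge}. Each matching edge becomes a leaf triangle, and all such triangles together with $uvr$ are edge-disjoint. Every packed edge outside $U$ lies in $G[R]$ or is a leaf–private edge. We must check the latter are in $X$. They are not, so the packed triangles $zxy$ with $x,y\in R$ use edges $zx,zy$ inside $E(U,V(G))$, which is allowed.

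The main obstacle is the degree condition for Fournier's lemma across the $3\times3$ combinations of forms, with $k$ from $0$ to $4$. In Form 3 on both sides, $c,d$ and their counterparts already have degree $3$, so adding cross edges can create a cycle among the degree-four vertices. We would first try a symmetric choice: cross edges pairing Form-3 vertices with Form-1 or Form-2 vertices, whose degree is lower. Where $\Delta=5$ cannot be avoided, we would drop one cross edge and accept $|S|$ smaller by one, with the slack coming from the leaf outside-edge bound. If that still fails in the doubly Form 3, $k=4$ case, we would invoke the codegree bound, as in Lemma \ref{k24twoedges}, to show $a,c$ would have $7$ common neighbors and so exclude the configuration.
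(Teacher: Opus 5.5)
Your construction is the paper's: pack $uvr$, delete its edges, keep some cross edges, color the rest with four colors by Lemma \ref{fourcolors}, and give one color class to each leaf. As you note, there is no avoidance constraint because every private vertex joins every leaf. The gap is that the two quantitative steps that make this a proof are wrong or left open.

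\textbf{The count and the target.} Each side has exactly one contact at each center, and a private edge only in Form 3. So
\[
|E(G[R])|=1+4+4+4+\epsilon+k=13+\epsilon+k,\qquad \epsilon=\beta_A+\beta_B.
\]
You counted two private edges unconditionally and then added $\beta_A+\beta_B$ again. Each leaf has $8$ neighbors in $R$, hence at most $2$ outside edges, so $|X|\le 21+\epsilon+k$.

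Your target $|S|\ge 12+\lceil(\epsilon+k)/2\rceil$ comes from the overcount, and your own construction cannot reach it. One triangle $uvr$ plus one leaf triangle per remaining edge gives exactly $|S|=11+\epsilon+q$ when $q$ cross edges are kept. For $k=\epsilon=0$ this is $11$, not $12$. The correct requirement is $22+2\epsilon+2q\ge 21+\epsilon+k$, that is, $q=\max\{0,\lceil(k-1-\epsilon)/2\rceil\}$. So $q\le 2$, and $q=2$ only when $\epsilon=0$ and $k=4$. There is no extra slack in the outside-edge bound to absorb a dropped cross edge; it is already spent in $21+\epsilon+k$.

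\textbf{The coloring hypothesis.} You leave the hypothesis of Lemma \ref{fourcolors} as something that ``should'' hold, and your remedies do not match the actual situation. The missing observation is the degree table after deleting $uvr$:
\begin{itemize}
\item[] $u,v$ have degree $3$, $r$ has degree $2$, and $s$ has degree $4$;
\item[] a private pair in Form 1, 2 or 3 has degrees $(3,1)$, $(2,2)$ or $(3,3)$.
\end{itemize}
So $s$ is the only vertex of degree four, and with $q\le 2$ the case $\Delta=5$ never arises.

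For $q=1$, the only bad cross edge joins the degree-three private vertices adjacent to $s$ on the two sides. Keeping it would create a triangle of degree-four vertices. At most one such edge exists, and $q=1$ forces $k\ge 2$, so a good edge is available. For $q=2$, keep $ad,bc$, with $a,c$ the private vertices joining $r$; the degree-four vertices then form a star at $s$.

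In particular, the doubly Form 3 case has $q\le 1$ and needs nothing special. Your fallback through a codegree bound is also unavailable. The lemma assumes only $\Delta(G)\le 10$ and the stated structure, so invoking codegree at most $6$ would prove a weaker statement.
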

\begin{proof}
Let $\epsilon$ count the private edges.
Each leaf has $8$ neighbors in $R$, so $|X|\le21+\epsilon+k$.
Pack $uvr$ and get rid of its three edges from the graph on $R$.
\leavevmode\vspace{0.5em} \\
In the remaining graph $u,v$ have degree three, $r$ has degree two and $s$ has degree four.
The degree pairs on a private side are $3,1$ or $2,2$ or $3,3$ according to its form.
Keep
$$q=\max\{0,\lceil(k-1-\epsilon)/2\rceil\}$$
cross edges.
\leavevmode\vspace{0.5em} \\
For $q=1$, choose an edge whose endpoints don't both have initial degree three and both join $s$.
At most one cross edge has that forbidden form and $q=1$ needs at least two present cross edges.
For $q=2$, we have $\epsilon=0,k=4$, so keep $ad,bc$.
\leavevmode\vspace{0.5em} \\
The chosen graph has $\Delta\le4$.
For $q=1$ the vertices of degree four induce a forest because the chosen edge avoids the only possible triangle with $s$, and for $q=2$ they induce a star centered at $s$.
\leavevmode\vspace{0.5em} \\
Color its edges with four colors by Lemma \ref{fourcolors} and assign the four color matchings to the four leaves.
Their leaf triangles together with $uvr$ give
\[
|S|=11+\epsilon+q,\,|X|\le21+\epsilon+k\le22+2\epsilon+2q=2|S|.
\]
\end{proof}
\begin{lemma}\label{k24twoleaf}
Assume $\Delta(G)\le10$, label $L=\{0,1,2,3\}$, and suppose
\[
N(a)\cap L=N(b)\cap L=\{0,1\},\,N(c)\cap L=N(d)\cap L=\{2,3\},
\]
with center contacts $sa,rb,sc,rd$.
Then $\{u,v,r,s\}$ is reducible for every graph on $\{a,b,c,d\}$.
\end{lemma}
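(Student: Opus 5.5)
The plan is to follow the local-reduction recipe from the preliminaries with $Z=\{r,s\}$ and $U=\{u,v\}\cup Z$. We fix a $2$-coloring $\chi$ of $V(J)$, take $X=X_J\cup E(Z,V(G)\setminus V(J))$, and build a packing $S$ inside $J$ with $E(S)\setminus E(U,V(G))\subseteq X_J$.

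First I would pin down the degrees in $J$. By hypothesis, $r$ is adjacent in $J$ to $u,v$, the four leaves and $b,d$, and $s$ is adjacent to $u,v$, the four leaves and $a,c$. So $d_J(r)=d_J(s)=8$, and the outside term costs at most $(10-8)+(10-8)=4$. The reduction therefore needs
\[
|X_J|+4\le2|S|.
\]
Since $\chi$ is a $2$-coloring, $X_J$ automatically meets every local triangle. The real content is a coloring that keeps $|X_J|$ small while still allowing many packable triangles.

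\textbf{Choice of coloring.} Every packed edge avoiding $U$ must be monochromatic. Such edges are leaf--private edges or edges on $\{a,b,c,d\}$, and the triangles meeting $U$ in at least two vertices are too few. Indeed, $G[U]$ has only the $5$ edges $uv,ur,us,vr,vs$, so such triangles give at most $5$ of them. Together with $uab$ and $vcd$ this is at most $7$, which is too few against the fixed cost of the leaf edges.

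So I would give $a,b$ the color of leaves $0,1$ and give $c,d$ the color of leaves $2,3$. Then the eight edges $a0,a1,b0,b1,c2,c3,d2,d3$ are monochromatic and packable, in triangles of the form $\ell a x$ with $x\in\{u,s,b\}$ and similarly on the other side. I would color $0,1,a,b$ opposite to $2,3,c,d$, so that all four cross edges between $\{a,b\}$ and $\{c,d\}$ become free. The colors of $u,v,r,s$ would then be chosen to balance their leaf edges against their private edges $ua,ub,vc,vd,sa,rb,sc,rd$.

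Next I would tabulate $|X_J|$ as a fixed part plus $|E(G[\{a,b\}])|+|E(G[\{c,d\}])|$, the only dependence on the unknown graph on $\{a,b,c,d\}$. I would then exhibit a base packing of leaf triangles that uses each leaf's edges to $U$ and to its two private neighbors. I expect about $10$ or $11$ triangles, for example $0ua$, $0rb$, $1sa$, $1vr$, \dots, together with $uvr$ or $uvs$. Whenever $ab$ or $cd$ is present, I would add one triangle on it, $uab$ or $vcd$, so that each extra cover edge is paid for by an extra triangle.

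The main obstacle I expect is the bookkeeping: finding one explicit edge-disjoint packing, in the style of the panels of Lemma~\ref{k24twoedges}, that reaches $2|S|\ge|X_J|+4$ uniformly over all graphs on $\{a,b,c,d\}$. The cross edges cost nothing under this coloring, so only $ab$ and $cd$ matter, which leaves four cases. The structure is also symmetric under exchanging $(u,a,b,0,1)\leftrightarrow(v,c,d,2,3)$ with $r,s$ fixed, since $sa,sc$ and $rb,rd$ correspond. This reduces the cases further.

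If the first coloring leaves a deficit of one edge, I would first try moving $r$ or $s$ to the other color class. I would also check whether Lemma~\ref{fourcolors} gives a four-edge-coloring of the graph on $R$ matched to the four leaves, as in the preceding lemmas. That would produce one matching per leaf while respecting which private vertices miss which leaves, where now each leaf misses exactly two private vertices.
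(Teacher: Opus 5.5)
\textbf{Gap: the coloring you commit to cannot work.} The framework itself is sound. With $U=\{u,v,r,s\}$ and $Z=\{r,s\}$ you have $d_J(r)=d_J(s)=8$, the outside cost is at most $4$, and you need $|X_J|+4\le 2|S|$.

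Separating $\{0,1,a,b\}$ from $\{2,3,c,d\}$ saves at most the four cross edges. In exchange it makes $ab,cd$ monochromatic and forces many monochromatic edges at $U$:
\begin{itemize}
\item the $8$ leaf--private edges;
\item $3$ edges to the rest at each of $r$ and $s$, in either class;
\item at least $6$ more from $u$, $v$ and $G[U]$, whatever colors $u,v$ get.
\end{itemize}
So no choice of colors for $u,v,r,s$ avoids
\[
|X_J|\ge 20+|E(G[\{a,b\}])|+|E(G[\{c,d\}])|.
\]

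On the packing side, a cross edge is bichromatic and misses $U$, so it can never be packed. In the remaining usable graph, $u$ and $v$ have degree $9$. Also $a,b$ (resp.\ $c,d$) have degree $5$ when $ab$ (resp.\ $cd$) is present. Every odd-degree vertex meets an unpacked edge, so at most $36$ edges are packed and $|S|\le 12$.

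Hence $2|S|\le 24<|X_J|+4$ as soon as $ab$ or $cd$ is present and $d(r)=d(s)=10$. So the three cases you deferred fail outright. Adding $uab$ or $vcd$ is impossible, and recoloring $r$ or $s$ changes nothing. Even the empty private graph needs exactly $12$ triangles, the parity maximum, not the $10$ or $11$ you expect.

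\textbf{Repair within your framework.} Color $u,v,r,s$ alike and all of $L\cup\{a,b,c,d\}$ the other way. Then $X_J$ consists of the $5$ edges of $G[U]$, the $8$ leaf--private edges and at most $6$ edges on $\{a,b,c,d\}$, so $|X_J|\le 19$. The triangles
\[
au0,\ as1,\ bu1,\ br0,\ 0vs,\ 1vr,\ cv2,\ cs3,\ dv3,\ dr2,\ 2us,\ 3ur
\]
are edge-disjoint, and their only edges avoiding $U$ are leaf--private, hence monochromatic. This gives $|X_J|+4\le 23<24=2|S|$ for every graph on $\{a,b,c,d\}$.

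\textbf{The paper's route} uses no coloring. It packs the $10$ triangles $0rb,0sa,0uv,1sv,1ub,2ru,2sc,2vd,3rv,3su$ and picks $16$ local edges so that in $G-X$:
\begin{itemize}
\item the neighborhoods of $u$ and $r$ lie in $L\cup\{b\}$;
\item the neighborhood of $v$ lies in $\{0,1,3,c\}$;
\item the neighborhood of $s$ lies in $L\cup\{c\}$.
\end{itemize}
These sets are independent, so edges among $a,b,c,d$ never matter, and $|X|\le 16+4=20=2|S|$.
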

\begin{proof}
Take $U=\{u,v,r,s\}$ and use the below packing and cover.
Add every edge from $r,s$ to a vertex outside the local graph to the blue cover.
$d_J(r),d_J(s)\ge8$, so $|X|\le16+4=20=2|S|$.
\begin{center}
\begin{minipage}{\textwidth}
\centering\setlength{\tabcolsep}{3pt}
\begin{tabular}{@{}cc@{}}
\begin{tabular}{@{}l@{}}
\PackingRow{0}{r/b,s/a,u/v}\\[0.18em]
\PackingRow{1}{s/v,u/b}\\[0.18em]
\PackingRow{2}{r/u,s/c,v/d}\\[0.18em]
\PackingRow{3}{r/v,s/u}\\[0.18em]
\end{tabular}&
\begin{tikzpicture}[baseline=-0.5ex]
\foreach \GraphVertex/\GraphX/\GraphY in {0/0.00000/1.40000,1/0.70000/1.21244,2/1.21244/0.70000,3/1.40000/0.00000,4/1.21244/-0.70000,5/0.70000/-1.21244,6/0.00000/-1.40000,7/-0.70000/-1.21244,8/-1.21244/-0.70000,9/-1.40000/-0.00000,10/-1.21244/0.70000,11/-0.70000/1.21244}{\coordinate (p\GraphVertex) at (\GraphX,\GraphY);}
\foreach \GraphFrom/\GraphTo in {4/8,4/9,5/9,6/10,6/11,6/1,7/10,8/3,8/0,11/2,11/1,2/0,2/1,3/0,3/1,0/1}{\draw[blue!75!black,line width=0.7pt] (p\GraphFrom)--(p\GraphTo);}
\foreach \GraphVertex/\GraphLabel in {0/{u},1/{v},2/{r},3/{s},4/{0},5/{1},6/{2},7/{3},8/{a},9/{b},10/{c},11/{d}}{\node[proof vertex] at (p\GraphVertex) {$\GraphLabel$};}
\end{tikzpicture}
\end{tabular}

\end{minipage}
\end{center}
The 10 triangles don't share any edges, and every packed edge outside $U$ is blue.
After deleting $X$, the remaining neighborhoods satisfy
$$N_{G-X}(u),N_{G-X}(r)\subseteq L\cup\{b\}$$
$$N_{G-X}(v)\subseteq\{0,1,3,c\}$$
$$N_{G-X}(s)\subseteq L\cup\{c\}.$$
Each set on the RHS is independent in $G-X$, so no triangle meets $U$.
\end{proof}
\subsection{More reductions}
For $c=5$, edge deletions must obey the choice maximizing $|E(H)|$.
For $c\ge7$, the bound $|A|+|B|\le18-2c$ limits the extra cover cost at the private vertices.
\leavevmode\vspace{0.5em} \\
\noindent\textbf{Case 1: $c=5$.}\vspace{0.5em} \\*
\label{c5-ten}
Say four common neighbors of an edge induce more edges than $H$.
A further common neighbor contradicts the choice maximizing $|E(H)|$ or the codegree bound, so these four are all its common neighbors.
Cover every triangle on the edge before deleting it without selecting it.
\leavevmode\vspace{0.5em} \\
\noindent\textbf{Subcase 1a: $H$ is empty.}\vspace{0.5em} \\*
Apply the deletion rule to each private triangle edge in the below block.
The diagram on $h,a,b,c,x,y$ gives $|S|=4$, $|X|=8$ for $K_6-xy$.
\begin{center}
\begin{minipage}{\textwidth}
\centering\setlength{\tabcolsep}{3pt}
\begin{tabular}{@{}cc@{}}
\begin{tabular}{@{}l@{}}
\PackingRow{h}{a/x,b/y}\\[0.18em]
\PackingRow{a}{c/y}\\[0.18em]
\PackingRow{b}{c/x}\\[0.18em]
\end{tabular}&
\begin{tikzpicture}[baseline=-0.5ex]
\foreach \GraphVertex/\GraphX/\GraphY in {0/0.00000/0.92000,1/0.79674/0.46000,2/0.79674/-0.46000,3/0.00000/-0.92000,4/-0.79674/-0.46000,5/-0.79674/0.46000}{\coordinate (p\GraphVertex) at (\GraphX,\GraphY);}
\foreach \GraphFrom/\GraphTo in {1/2,1/4,1/5,2/4,2/5,3/0,3/4,3/5}{\draw[blue!75!black,line width=0.7pt] (p\GraphFrom)--(p\GraphTo);}
\foreach \GraphVertex/\GraphLabel in {0/{h},1/{a},2/{b},3/{c},4/{x},5/{y}}{\node[proof vertex] at (p\GraphVertex) {$\GraphLabel$};}
\end{tikzpicture}
\end{tabular}

\end{minipage}
\end{center}
\leavevmode\vspace{0.5em} \\
Assume a private triangle has complete contacts to a pair $\{x,y\}\subset C$ and no other common contacts.
The side may also contain a private isolate.
\leavevmode\vspace{0.5em} \\
Two blocks and a triangle using $uv$ give $|S|=9$, $|X|=18$ when one edge from an endpoint to its private isolate suffices.
With two isolates, add a contact triangle at an isolate to get $|S|=10$, $|X|=20$.
\vspace{-0.5em} \\ In the link at $u$, label the private triangle $abc$, its isolate $d$ and the other endpoint $v$.
If $d$ has one contact $dq$ outside $\{x,y\}$, take $M=\{bc,ax,dq,v\ell\}$, $Q=\{v,x,y,a\}$ with $\ell\notin\{x,y,q\}$.
If $d$ has no such contact, use $M=\{bc,dx,ay,v\ell\}$ and the same $Q$ after exchanging $x,y$ if needed.
\leavevmode\vspace{0.5em} \\
Both give WKE, so $d$ has at least two contacts outside $\{x,y\}$.
Choose the triangle using $uv$ outside both private contact pairs, then choose the isolate contact outside that leaf and its own pair.
The resulting triangles don't share any edges.
\leavevmode\vspace{0.5em} \\
\noindent\textbf{Subcase 1b: A private $K_4-e$ or $K_4$.}\vspace{0.5em} \\*
Name the private vertices $t,b,p,q$ with only $pq$ absent in the first form.
Assume the common contacts are $tw,pw,qw$.
Get rid of $vt$ and take the below packing of size four.
\begin{center}
\begin{minipage}{\textwidth}
\centering\setlength{\tabcolsep}{3pt}
\begin{tabular}{@{}cc@{}}
\begin{tabular}{@{}l@{}}
\PackingRow{v}{w/p,b/q}\\[0.18em]
\PackingRow{t}{w/q,b/p}\\[0.18em]
\end{tabular}&
\begin{tikzpicture}[baseline=-0.5ex]
\foreach \GraphVertex/\GraphX/\GraphY in {0/0.00000/0.92000,1/0.79674/0.46000,2/0.79674/-0.46000,3/0.00000/-0.92000,4/-0.79674/-0.46000,5/-0.79674/0.46000}{\coordinate (p\GraphVertex) at (\GraphX,\GraphY);}
\foreach \GraphFrom/\GraphTo in {3/4,3/5,3/2,4/2,4/1,5/2,5/1,2/1}{\draw[blue!75!black,line width=0.7pt] (p\GraphFrom)--(p\GraphTo);}
\foreach \GraphVertex/\GraphLabel in {0/{v},1/{w},2/{t},3/{b},4/{p},5/{q}}{\node[proof vertex] at (p\GraphVertex) {$\GraphLabel$};}
\end{tikzpicture}
\end{tabular}

\end{minipage}
\end{center}
\leavevmode\vspace{0.5em} \\
Pick the $8$ edges not incident to the endpoint to cover its triangles and every packed edge outside it.
For $K_4$, three private vertices join $w$ and the fourth misses $C$, giving $K_6-wb$.
Use the $K_6$ minus an edge construction.
\leavevmode\vspace{0.5em} \\
Combine either block with the triangle and isolate block and a triangle using $uv$ outside their common contacts, and pick the edge from the endpoint to the isolate.
We get $|S|=9$, $|X|=18$, and every unselected packed private edge satisfies the deletion rule.
\leavevmode\vspace{0.5em} \\
\noindent\textbf{Subcase 1c: $H$ is a star.}\vspace{0.5em} \\*
Write $z$ for the star center and label the leaves $0,1,2,3$.
The vertices $u,v,z$ form a triangle with these four independent common neighbors.
The codegree bound excludes contacts from a private group to another vertex of $\{u,v,z\}$, and the WKE constructions for at most two private vertices give $d(u),d(v),d(z)\in\{9,10\}$.
\leavevmode\vspace{0.5em} \\
Choose three distinct leaves and pack one triangle on each of $uv,uz,vz$.
For $h\in\{u,v,z\}$, choose a matching $M_h$ on its private edges and leaf contacts, avoiding its two used leaves, and a vertex cover $Q_h$ after deleting $M_h$.
If $|Q_h|\le|M_h|+1$, pick $uv,uz,vz$, the matching edges and every edge from $h$ to $Q_h$ to get
\[
|S|=3+\sum_h|M_h|,\,|X|\le6+2\sum_h|M_h|=2|S|.
\]
Every packed edge outside $\{u,v,z\}$ belongs to $X$.
\leavevmode\vspace{0.5em} \\
For $U=\{u,v,z\}$, take one of the three below constructions.
At each $h\in\{u,v,z\}$, the labels $a,\ldots,l$ in its matching or blue edge diagram form its private set.
Its neighborhood consists of that set, $\{u,v,z\}\setminus\{h\}$ and $\{0,1,2,3\}$.
\leavevmode\vspace{0.5em} \\
Require the $10$ below triangles and assume the crossed edges are absent. Allow all other contacts and private edges.
Take $X$ as the blue edges together with $E(S)\setminus E(U,V(G))$.
Then $|X|=20=2|S|$ and $X$ meets every triangle at $u,v,z$.
\begin{center}
\begin{minipage}{\textwidth}
\centering\setlength{\tabcolsep}{3pt}
\leavevmode\vspace{0.5em} \\
\textbf{Construction 1}\vspace{0.5em} \\
\begin{tabular}{@{}l@{}}
\PackingRow{u}{v/3,z/0,1/c,a/b}\\[0.18em]
\PackingRow{v}{z/1,d/g,e/f}\\[0.18em]
\PackingRow{z}{2/j,3/i,h/k}\\[0.18em]
\end{tabular}
\\[0.3em]
\begin{tabular}{@{}ccccc@{}}
\begin{tikzpicture}[baseline=-0.5ex]
\coordinate (p0) at (0,0);
\coordinate (p1) at (0.00000,0.61000);
\draw[blue!75!black,line width=0.7pt] (p0)--(p1);
\coordinate (p2) at (0.52828,-0.30500);
\draw[blue!75!black,line width=0.7pt] (p0)--(p2);
\coordinate (p3) at (-0.52828,-0.30500);
\draw[blue!75!black,line width=0.7pt] (p0)--(p3);
\foreach \GraphVertex/\GraphLabel in {0/{u},1/{a},2/{b},3/{c}}{\node[proof vertex] at (p\GraphVertex) {$\GraphLabel$};}
\end{tikzpicture}&
\begin{tikzpicture}[baseline=-0.5ex]
\coordinate (p0) at (0,0);
\coordinate (p1) at (0.00000,0.61000);
\draw[blue!75!black,line width=0.7pt] (p0)--(p1);
\coordinate (p2) at (0.58014,0.18850);
\draw[blue!75!black,line width=0.7pt] (p0)--(p2);
\coordinate (p3) at (0.35855,-0.49350);
\draw[blue!75!black,line width=0.7pt] (p0)--(p3);
\coordinate (p4) at (-0.35855,-0.49350);
\draw[blue!75!black,line width=0.7pt] (p0)--(p4);
\coordinate (p5) at (-0.58014,0.18850);
\draw[blue!75!black,line width=0.7pt] (p0)--(p5);
\foreach \GraphVertex/\GraphLabel in {0/{v},1/{0},2/{1},3/{2},4/{3},5/{f}}{\node[proof vertex] at (p\GraphVertex) {$\GraphLabel$};}
\end{tikzpicture}&
\begin{tikzpicture}[baseline=-0.5ex]
\coordinate (p0) at (0,0);
\coordinate (p1) at (0.00000,0.61000);
\draw[blue!75!black,line width=0.7pt] (p0)--(p1);
\coordinate (p2) at (0.61000,0.00000);
\draw[blue!75!black,line width=0.7pt] (p0)--(p2);
\coordinate (p3) at (0.00000,-0.61000);
\draw[blue!75!black,line width=0.7pt] (p0)--(p3);
\coordinate (p4) at (-0.61000,-0.00000);
\draw[blue!75!black,line width=0.7pt] (p0)--(p4);
\foreach \GraphVertex/\GraphLabel in {0/{z},1/{h},2/{i},3/{j},4/{k}}{\node[proof vertex] at (p\GraphVertex) {$\GraphLabel$};}
\end{tikzpicture}&
{\color{blue!75!black}$uz\in X$}&
\begin{tikzpicture}[baseline=-0.5ex]
\foreach \GraphVertex/\GraphX/\GraphY in {0/0.00000/0.66000,1/0.57158/-0.33000,2/-0.57158/-0.33000}{\coordinate (p\GraphVertex) at (\GraphX,\GraphY);}
\draw[gray,densely dashed,line width=0.65pt] (p0)--(p1);
\path (p0)--(p1) node[midway,text=red!75!black,font=\scriptsize] {$\times$};
\draw[gray,densely dashed,line width=0.65pt] (p1)--(p2);
\path (p1)--(p2) node[midway,text=red!75!black,font=\scriptsize] {$\times$};
\foreach \GraphVertex/\GraphLabel in {0/{d},1/{e},2/{g}}{\node[proof vertex] at (p\GraphVertex) {$\GraphLabel$};}
\end{tikzpicture}\\
\end{tabular}\\[0.65em]
\leavevmode\vspace{0.5em} \\
\textbf{Construction 2}\vspace{0.5em} \\
\begin{tabular}{@{}l@{}}
\PackingRow{u}{v/3,z/1,0/d,a/b}\\[0.18em]
\PackingRow{v}{z/2,0/e,1/h,f/g}\\[0.18em]
\PackingRow{z}{i/l,j/k}\\[0.18em]
\end{tabular}
\\[0.3em]
\begin{tabular}{@{}ccccc@{}}
\begin{tikzpicture}[baseline=-0.5ex]
\coordinate (p0) at (0,0);
\coordinate (p1) at (0.00000,0.61000);
\draw[blue!75!black,line width=0.7pt] (p0)--(p1);
\coordinate (p2) at (0.61000,0.00000);
\draw[blue!75!black,line width=0.7pt] (p0)--(p2);
\coordinate (p3) at (0.00000,-0.61000);
\draw[blue!75!black,line width=0.7pt] (p0)--(p3);
\coordinate (p4) at (-0.61000,-0.00000);
\draw[blue!75!black,line width=0.7pt] (p0)--(p4);
\foreach \GraphVertex/\GraphLabel in {0/{u},1/{a},2/{b},3/{c},4/{d}}{\node[proof vertex] at (p\GraphVertex) {$\GraphLabel$};}
\end{tikzpicture}&
\begin{tikzpicture}[baseline=-0.5ex]
\coordinate (p0) at (0,0);
\coordinate (p1) at (0.00000,0.61000);
\draw[blue!75!black,line width=0.7pt] (p0)--(p1);
\coordinate (p2) at (0.61000,0.00000);
\draw[blue!75!black,line width=0.7pt] (p0)--(p2);
\coordinate (p3) at (0.00000,-0.61000);
\draw[blue!75!black,line width=0.7pt] (p0)--(p3);
\coordinate (p4) at (-0.61000,-0.00000);
\draw[blue!75!black,line width=0.7pt] (p0)--(p4);
\foreach \GraphVertex/\GraphLabel in {0/{v},1/{e},2/{f},3/{g},4/{h}}{\node[proof vertex] at (p\GraphVertex) {$\GraphLabel$};}
\end{tikzpicture}&
\begin{tikzpicture}[baseline=-0.5ex]
\coordinate (p0) at (0,0);
\coordinate (p1) at (0.00000,0.61000);
\draw[blue!75!black,line width=0.7pt] (p0)--(p1);
\coordinate (p2) at (0.61000,0.00000);
\draw[blue!75!black,line width=0.7pt] (p0)--(p2);
\coordinate (p3) at (0.00000,-0.61000);
\draw[blue!75!black,line width=0.7pt] (p0)--(p3);
\coordinate (p4) at (-0.61000,-0.00000);
\draw[blue!75!black,line width=0.7pt] (p0)--(p4);
\foreach \GraphVertex/\GraphLabel in {0/{z},1/{0},2/{1},3/{2},4/{3}}{\node[proof vertex] at (p\GraphVertex) {$\GraphLabel$};}
\end{tikzpicture}&
{\color{blue!75!black}$uv\in X$}&
\begin{tikzpicture}[baseline=-0.5ex]
\foreach \GraphVertex/\GraphX/\GraphY in {0/0.00000/0.66000,1/0.66000/0.00000,2/0.00000/-0.66000,3/-0.66000/-0.00000}{\coordinate (p\GraphVertex) at (\GraphX,\GraphY);}
\draw[gray,densely dashed,line width=0.65pt] (p0)--(p1);
\path (p0)--(p1) node[midway,text=red!75!black,font=\scriptsize] {$\times$};
\draw[gray,densely dashed,line width=0.65pt] (p0)--(p2);
\path (p0)--(p2) node[midway,text=red!75!black,font=\scriptsize] {$\times$};
\draw[gray,densely dashed,line width=0.65pt] (p1)--(p3);
\path (p1)--(p3) node[midway,text=red!75!black,font=\scriptsize] {$\times$};
\draw[gray,densely dashed,line width=0.65pt] (p2)--(p3);
\path (p2)--(p3) node[midway,text=red!75!black,font=\scriptsize] {$\times$};
\foreach \GraphVertex/\GraphLabel in {0/{i},1/{j},2/{k},3/{l}}{\node[proof vertex] at (p\GraphVertex) {$\GraphLabel$};}
\end{tikzpicture}\\
\end{tabular}\\[0.65em]
\leavevmode\vspace{0.5em} \\
\textbf{Construction 3}\vspace{0.5em} \\
\begin{tabular}{@{}l@{}}
\PackingRow{u}{v/0,z/1,2/b,3/c,a/d}\\[0.18em]
\PackingRow{v}{z/2,3/g,e/h}\\[0.18em]
\PackingRow{z}{i/l,j/k}\\[0.18em]
\end{tabular}
\\[0.3em]
\begin{tabular}{@{}ccccc@{}}
\begin{tikzpicture}[baseline=-0.5ex]
\coordinate (p0) at (0,0);
\coordinate (p1) at (0.00000,0.61000);
\draw[blue!75!black,line width=0.7pt] (p0)--(p1);
\coordinate (p2) at (0.61000,0.00000);
\draw[blue!75!black,line width=0.7pt] (p0)--(p2);
\coordinate (p3) at (0.00000,-0.61000);
\draw[blue!75!black,line width=0.7pt] (p0)--(p3);
\coordinate (p4) at (-0.61000,-0.00000);
\draw[blue!75!black,line width=0.7pt] (p0)--(p4);
\foreach \GraphVertex/\GraphLabel in {0/{u},1/{a},2/{b},3/{c},4/{d}}{\node[proof vertex] at (p\GraphVertex) {$\GraphLabel$};}
\end{tikzpicture}&
\begin{tikzpicture}[baseline=-0.5ex]
\coordinate (p0) at (0,0);
\coordinate (p1) at (0.00000,0.61000);
\draw[blue!75!black,line width=0.7pt] (p0)--(p1);
\coordinate (p2) at (0.61000,0.00000);
\draw[blue!75!black,line width=0.7pt] (p0)--(p2);
\coordinate (p3) at (0.00000,-0.61000);
\draw[blue!75!black,line width=0.7pt] (p0)--(p3);
\coordinate (p4) at (-0.61000,-0.00000);
\draw[blue!75!black,line width=0.7pt] (p0)--(p4);
\foreach \GraphVertex/\GraphLabel in {0/{v},1/{e},2/{f},3/{g},4/{h}}{\node[proof vertex] at (p\GraphVertex) {$\GraphLabel$};}
\end{tikzpicture}&
\begin{tikzpicture}[baseline=-0.5ex]
\coordinate (p0) at (0,0);
\coordinate (p1) at (0.00000,0.61000);
\draw[blue!75!black,line width=0.7pt] (p0)--(p1);
\coordinate (p2) at (0.61000,0.00000);
\draw[blue!75!black,line width=0.7pt] (p0)--(p2);
\coordinate (p3) at (0.00000,-0.61000);
\draw[blue!75!black,line width=0.7pt] (p0)--(p3);
\coordinate (p4) at (-0.61000,-0.00000);
\draw[blue!75!black,line width=0.7pt] (p0)--(p4);
\foreach \GraphVertex/\GraphLabel in {0/{z},1/{0},2/{1},3/{2},4/{3}}{\node[proof vertex] at (p\GraphVertex) {$\GraphLabel$};}
\end{tikzpicture}&
{\color{blue!75!black}$uv\in X$}&
\begin{tikzpicture}[baseline=-0.5ex]
\foreach \GraphVertex/\GraphX/\GraphY in {0/0.00000/0.66000,1/0.66000/0.00000,2/0.00000/-0.66000,3/-0.66000/-0.00000}{\coordinate (p\GraphVertex) at (\GraphX,\GraphY);}
\draw[gray,densely dashed,line width=0.65pt] (p0)--(p1);
\path (p0)--(p1) node[midway,text=red!75!black,font=\scriptsize] {$\times$};
\draw[gray,densely dashed,line width=0.65pt] (p0)--(p2);
\path (p0)--(p2) node[midway,text=red!75!black,font=\scriptsize] {$\times$};
\draw[gray,densely dashed,line width=0.65pt] (p1)--(p3);
\path (p1)--(p3) node[midway,text=red!75!black,font=\scriptsize] {$\times$};
\draw[gray,densely dashed,line width=0.65pt] (p2)--(p3);
\path (p2)--(p3) node[midway,text=red!75!black,font=\scriptsize] {$\times$};
\foreach \GraphVertex/\GraphLabel in {0/{i},1/{j},2/{k},3/{l}}{\node[proof vertex] at (p\GraphVertex) {$\GraphLabel$};}
\end{tikzpicture}\\
\end{tabular}\\[0.65em]

\end{minipage}
\end{center}
\leavevmode\vspace{0.5em} \\
For $c\ge7$, start with a reduction of $\{u,v\}$ inside $\{u,v\}\cup C$ and put $m=|A|+|B|$.
Select $E(\{u\},A)\cup E(\{v\},B)$ if $|X|+m\le2|S|$.
Add $t$ triangles at $u$ or $v$ using private edges if
\[
|X|+m-t\le2|S|,
\]
since the cover grows by $m+t$ and twice the packing grows by $2t$.
\leavevmode\vspace{0.5em} \\
Require the added triangles to share no edge with each other or with the first packing.
\leavevmode\vspace{0.5em} \\
\noindent\textbf{Case 2: $c=7$.}\vspace{0.5em} \\*
\label{c7-ten}
Since $m\le4$, constructions with $|X|+4\le2|S|$ extend to every private side.
\leavevmode\vspace{0.5em} \\
\noindent\textbf{Case 3: $c=8$.}\vspace{0.5em} \\*
\label{c8-ten}
Since $m\le2$, constructions with $|X|+2\le2|S|$ extend to every private side.
\leavevmode\vspace{0.5em} \\
\noindent\textbf{Case 4: $c=9$.}\vspace{0.5em} \\*
\label{c9-ten}
Both endpoints have degree $10$ and no private vertices.
A pair reduction on $8$ common vertices with $|X|+2\le2|S|$ extends to a common set of size $9$ by picking $uz,vz$ at the added vertex $z$.
These edges cover the new triangles at $u,v$ and keep $S$ unchanged.
\section{Verification}\label{finite-completion}\label{verification-ten}
The Github files \cite{Supplement} check all $40$ endpoint degree pairs from Lemma \ref{endpoint-degrees-ten}.
For each pair they check common graphs, private graphs and contacts with the codegree bound, connected non WKE links and the edge selection rules.
\leavevmode\vspace{0.5em} \\
The files give the allowed and needed local edges for each reduction.
Lemma \ref{interval-ten} allows the other edges at a pair, and the bounds on the number of outside edges apply when the reduction removes common vertices.
\begin{proposition}[{\cite{Supplement}}]\label{forty-ten}
For each $c\in\{4,5,6,7,8,9\}$, every selected edge domain in Lemma \ref{endpoint-degrees-ten} has a vertex or edge reduction.
\end{proposition}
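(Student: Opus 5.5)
The plan is a finite case analysis, organised by the codegree $c$ and the endpoint degree pair. At each stage the hand-proved lemmas of Sections~3 and~4 are used to prune the domain before any search. First I would fix $c$ and one of the $40$ unordered pairs $(d(u),d(v))$ allowed by Lemma~\ref{endpoint-degrees-ten}, so that $|A|=d(u)-c-1$ and $|B|=d(v)-c-1$ are fixed. A selected edge domain is then a labelled local graph $J$ on $N[u]\cup N[v]$. It is given by the common graph $H$, the private graphs $G[A]$ and $G[B]$, and the contacts $E(A,C)\cup E(B,C)$. Edges of $E(A,B)$ are irrelevant by the remark after Lemma~\ref{interval-ten}. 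The constraints are these:
\begin{itemize}
\item every vertex of $C$ has at most $c-1$ neighbours in $J-\{u,v\}$ that are common with $u$ or $v$, which is the codegree bound;
\item the links at $u$ and $v$ are connected and not WKE, since otherwise Lemma~\ref{wke-reduction-ten} reduces;
\item the link components at vertices of $C$ inside $J$ satisfy robustness;
\item the tie-breaking selection rules of Section~\ref{choose-ten} hold, namely maximal $|E(H)|$ and the lexicographic rule for $c=4$.
\end{itemize}
I would enumerate such $J$ up to isomorphism preserving $u$, $v$ and the sides, with canonical labelling to keep the counts manageable.

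Second, for each surviving domain I would look for a certificate. This is a set $U$ (either $\{u,v\}$, $\{u,v\}\cup Z$ with $Z\subseteq C$, or a triple such as $\{u,v,z\}$), a packing $S$ and a cover $X$. I would use Lemma~\ref{interval-ten} to make one certificate valid for a whole interval $J^-\subseteq J\subseteq J^+$ of domains. The search would be an integer program: maximise $2|S|-|X|$ subject to edge-disjointness, covering every triangle at $U$ in $J^+$, and $E(S)\setminus E(U,V(G))\subseteq X$. When $Z\neq\varnothing$ it is charged $10-d_J(z)$ for outside edges. In many cases I would first try the colouring scheme $X_J=\{ab:\chi(a)=\chi(b)\}$, because it reduces the search to a 2-colouring plus a packing. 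When no vertex reduction exists, I would try an edge reduction with $F$ a private or common edge, using the deletion rule that all triangles on a deleted unpicked edge must be covered.

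Third, I would use the structural lemmas to dispose of large families without enumeration. These are Lemmas~\ref{d10sparse}, \ref{d10k34}, \ref{d10fiveholes}, \ref{three-hub-five-ten}, \ref{star-isolate-six-ten} and the $K_{2,4}$ lemmas~\ref{k24twoedges}--\ref{k24twoleaf}, together with the Case~1--4 extension rules for $c\ge 5$. For $c\ge7$ I would search only on $\{u,v\}\cup C$ for certificates with slack $|X|+m-t\le 2|S|$, and extend to the private sides as described. For $c=9$ I would go down from $8$ common vertices. This handles the high-codegree cases largely by induction on $c$ and on $m$, and leaves the bulk of the computation at $c\in\{4,5,6\}$.

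The main obstacle is $c=4$ and $c=5$ with $d(u)=d(v)=10$. There the private sets have up to $5$ or $4$ vertices, the number of domains explodes, and the slack is zero. For these I would first split by the isomorphism type of $H$, using the earlier remark that $K_{1,5}$ and an independent common neighbourhood are already excluded. I would then prune by non-WKE of each link, applying Lemmas~\ref{one-hub-ten}, \ref{two-hub-ten} and~\ref{four-private-ten} to force the private groups into the rigid $P_3/K_3$ with shared-pair shapes. Only after that would I run the certificate search, and the result is then recorded in the supplement \cite{Supplement}.
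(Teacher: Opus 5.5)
Your overall route is the paper's. Proposition~\ref{forty-ten} is the computer check of \cite{Supplement} over the $40$ endpoint degree pairs. Its domains are cut down by the codegree bound, connected non-WKE links and the selection rules. Certificates are made valid on intervals by Lemma~\ref{interval-ten}, and outside edges are charged at removed common vertices. The hand lemmas and the $c\ge7$, $c=9$ extension rules are used essentially as you describe.

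The step that fails is the pruning in your last paragraph for $c\in\{4,5\}$, $d(u)=d(v)=10$. First, the exclusions of an independent $H$ and of $H\cong K_{1,5}$ are codegree-$6$ facts. $K_{1,5}$ has six vertices, so it cannot occur here. An empty $H$ at $c=5$ is not excluded but handled by the explicit constructions of Subcase~1a, and at $c=4$ no hand lemma excludes it. Second, Lemma~\ref{one-hub-ten} needs an independent common part of size at least $5$ and at most three private vertices on the side, and Lemma~\ref{two-hub-ten} is its $c=6$ variant. Here $|A|=d(u)-c-1\in\{4,5\}$, and for $c=4$ also $|C|=4$, so neither lemma applies. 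Third, among these cases Lemma~\ref{four-private-ten} applies only to $c=5$, $H\cong K_{1,4}$. Even there it yields a matching $M$ and a set $Q$ with $|Q|\le|M|+1$, not a $P_3$ or $K_3$ group with a shared pair of contacts. So nothing forces the private graphs into those shapes. Pruning with these lemmas would discard non-WKE domains that still need certificates, exactly in the largest part of the search, and the verification would be unsound. Your first step already tests non-WKE directly, so drop this pruning and rely on that test.

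A few precision points in your constraint list:
\begin{itemize}
\item The codegree bound at $z\in C$ must be imposed separately for $uz$ and $vz$, namely $|N(z)\cap(C\cup A)|\le c-1$ and $|N(z)\cap(C\cup B)|\le c-1$. A single bound on $N(z)\cap(A\cup B\cup C)$ would wrongly exclude the configuration of Lemma~\ref{d10k34}, where $c=7$ and each $p\in P$ has $8$ such neighbours.
\item $E(A,B)$ is irrelevant only for reductions of $\{u,v\}$. Once $U$ contains common vertices, triangles $zab$ with $ab\in E(A,B)$ meet $U$ and must be covered and paid for. A general integer-programming certificate must therefore keep all of $E(A,B)$ in $J^+$. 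The paper's colouring scheme absorbs these edges at zero cost by giving $A$ and $B$ opposite colours.
\item Robustness at $z\in C$ can be used locally only when the component of $uv$ in the link at $z$ is visibly closed in $J$, for instance when $z$ has no neighbour in $A\cup B\cup C$. Otherwise neighbours of $z$ outside $J$ can enlarge that component.
\end{itemize}
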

\begin{proof}[Proof of Theorem \ref{main}]
Take the minimal counterexample $G$ fixed in Section \ref{reductions-ten}.
Subsection \ref{subcubic-ten} excludes maximum codegree at most $3$. So the maximum codegree is in $\{4,5,6,7,8,9\}$. Choose the edge by Subsection \ref{choose-ten} giving the $40$ endpoint degree pairs in Lemma \ref{endpoint-degrees-ten}. Proposition \ref{forty-ten} gives a reduction, and Lemma \ref{combine-ten} or its edge version gives Tuza's inequality for $G$, contradicting minimality. We are done.
\end{proof}
\section{Conclusion}
The next natural step is to show Tuza's conjecture for $\Delta\le11$. The same reduction framework applies, but we need a way to handle subcubic links on $11$ vertices, more disconnected endpoint links, and bigger cover costs. We don't know if extending the computation would settle these cases.
\vspace{0.5em} \\
We might be able to push the $\Delta$ bound more with some effort and many more pages, but this method doesn't indicate any way to make the argument uniform.
\leavevmode\vspace{0.5em} \\
Another question is if we can prove the degree $10$ case without finite verification. The edge coloring constructions suggest looking for reductions that cover whole families of common neighborhoods.
The packings alone don't give such reductions, since the cover also needs to account for edges outside the local graph. It would also be interesting to find reductions valid without a $\Delta$ bound that improve Puleo's $\text{mad}$ bound \cite{Puleo}.
\section*{Note on AI usage}
AI wrote the code in the Github repository \cite{Supplement} and assisted in the latter half of the proof of Lemma \ref{thin-ten-ten}.
All other mathematics was done without AI assistance.

\end{document}